\documentclass[11pt,a4paper,reqno]{amsart}

\usepackage[a4paper,left=2.0cm,right=2.0cm,top=2.5cm,bottom=2.5cm]{geometry}

\usepackage{times} %
\usepackage{amsmath} %
\usepackage{amssymb}  %
\usepackage{amsthm}
\usepackage{latexsym}
\usepackage{amsfonts,bbm}
\usepackage{xcolor}
\usepackage{mathtools}
\usepackage{enumerate}
\usepackage{cite}
\usepackage[foot]{amsaddr}
\usepackage{algorithm,algorithmic}
\usepackage[a4paper,left=2.0cm,right=2.0cm,top=2.5cm,bottom=2.5cm]{geometry}
\usepackage{multicol}

\usepackage{tikz}
\usetikzlibrary{calc,positioning, arrows.meta, shapes.geometric}
\usepackage{xargs}
\usepackage[hidelinks]{hyperref}
\usepackage{textcomp}

\newtheorem{theorem}{Theorem}[section]
\newtheorem{corollary}[theorem]{Corollary}

\newtheorem{proposition}[theorem]{Proposition}

\newtheorem{remark}[theorem]{Remark}

\newtheorem{assumption}[theorem]{Assumption}

\usepackage{xcolor}
\usepackage[normalem]{ulem}
\usepackage{mathtools}
\usepackage{todonotes}
\usepackage{bm}
\usepackage{appendix}
\usepackage{url}

\usepackage{amsmath,amssymb}
\usepackage{tikz}
\usepackage{pgfplots}
\usetikzlibrary{fit}
\usepgfplotslibrary{groupplots}

\usepackage{enumitem}
\usepackage{cleveref}

\newcommand{\calU}{\mathcal{U}}
\newcommand{\calY}{\mathcal{Y}}
\newcommand{\calX}{\mathcal{X}}

\newcommand{\calA}{\mathcal{A}}
\newcommand{\calB}{\mathcal{B}}
\newcommand{\calC}{\mathcal{C}}
\newcommand{\calQ}{\mathcal{Q}}
\newcommand{\calK}{\mathcal{K}}
\newcommand{\calP}{\mathcal{P}}
\newcommand{\calM}{\mathcal{M}}
\newcommand{\calW}{\mathcal{W}}

\newcommand{\calR}{\mathcal{R}}
\newcommand{\calZ}{\mathcal{Z}}

\newcommand{\tcalA}{\widetilde{\mathcal{A}}}

\newcommand{\Ry}{\mathcal{R}_y}
\newcommand{\Ru}{\mathcal{R}_u}
\newcommand{\R}{\mathbb{R}}

\newcommand{\dom}[1]{\operatorname{dom}(#1)}
\newcommand{\ran}[1]{\operatorname{ran}(#1)}
\DeclareMathOperator{\re}{Re}

\newcommand{\blk}{\color{black}}

\allowdisplaybreaks

\newenvironment{smallpmatrix}%
{\left(\begin{smallmatrix}}%
{\end{smallmatrix}\right)}%

\newenvironment{smallbmatrix}%
{\left[\begin{smallmatrix}}%
{\end{smallmatrix}\right]}%

\pgfplotsset{
    dl1/.style={
        legend image code/.code={
            \draw[blue,dashed]
                (0cm,0cm) -- (0.5cm,0cm);
            \draw[blue,mark=o]
                plot coordinates {(0.25cm,0cm)};
            \node at (0.6cm,0cm) {\small /};
            \draw[red!70!black,dashed]
                (0.7cm,0cm) -- (1.2cm,0cm);
            \draw[red!70!black,mark=square*]
                plot coordinates {(0.95cm,0cm)};
        }
    }
}

\pgfplotsset{
    dl2/.style={
        legend image code/.code={
            \draw[blue,solid]
                (0cm,0cm) -- (0.5cm,0cm);
            \draw[blue,mark=o]
                plot coordinates {(0.25cm,0cm)};
            \node at (0.6cm,0cm) {\small /};
            \draw[red!70!black,solid]
                (0.7cm,0cm) -- (1.2cm,0cm);
            \draw[red!70!black,mark=square*]
                plot coordinates {(0.95cm,0cm)};
        }
    }
}

\title{Stabilize-then-optimize: Feedback transformations as preconditioners in optimal control}

\author{Till Preuster$^1$}\address{$^1$Faculty of Mathematics, Chemnitz University of Technology, Germany\\ Mail: \textsc{\{till.preuster,manuel.schaller,martin.stoll\}@math.tu-chemnitz.de}}
\author{Anton Schiela$^2$}\address{$^2$Department of Mathematics, University of Bayreuth, Germany\\ Mail: \textsc{anton.schiela@uni-bayreuth.de}}
\author{Manuel Schaller$^1$}
\author{Martin Stoll$^1$}

\thanks{Till Preuster and Manuel Schaller acknowledge funding by the Deutsche Forschungsgemeinschaft (DFG, German Research Foundation) – Project-ID 531152215 – CRC 1701. Manuel Schaller is further supported by the DFG under Project-ID 519323897. }

\begin{document}

\begin{abstract}
Many numerical algorithms for optimal control leverage an elimination of the state via the control-to-state map such as condensed approaches or preconditioned conjugate gradient methods for the optimality system. As such, the norm of the control-to-state map directly enters the convergence estimates for these methods, e.g., via the condition number of the associated linear system. In this work we show that using feedback transformations one may reformulate the optimal control problem to decrease the norm of the (feedbacked) control-to-state map, leading to a drastic improvement of the involved condition numbers. We illustrate the abstract approach for ordinary and partial differential equations such as parabolic, hyperbolic or elliptic equations. For each of these problem classes we provide a constructive method to improve solution operator norms via feedbacks. Further, we showcase the efficacy of the method by means of various numerical examples with elliptic, parabolic and hyperbolic partial differential equations. 
\end{abstract}

\maketitle
\smallskip
\noindent \textbf{Keywords:} PDE-constrained optimal control, preconditioning, feedback stabilization, conjugate gradient methods
\smallskip

\section{Introduction}
Optimal control of differential equations is a well-established field of mathematical analysis, numerical methods and scientific computing. Problems subject to ordinary or partial differential equations (ODEs or PDEs) have been thoroughly analyzed and numerically approached~\cite{Tro10,hinze2008optimization,lions1971optimal}. The fundamental structure in these systems is that pairs of optimization variables $(x,u)$ satisfying the PDE may be characterized via a control-to-state map. This structure is useful analytically but may also be used to derive efficient numerical algorithms as the control-to-state map immediately provides access to a projection onto the feasible set. %

Many PDE-constrained optimal control problems may be viewed as a realization of the abstract linear-quadratic optimal control problem (OCP)
\begin{equation}\label{eq:OCP}
    \min_{(x,u)\in \calX%
    \times \calU} \frac12\|\mathcal{C}x-y_\mathrm{ref}\|_\calY^2 + \frac\alpha2 \|u\|_\calU^2 \quad \mathrm{s.t.} \quad \mathcal{A}x-\mathcal{B}u = f,
\end{equation}
where
\begin{enumerate}[label=(\roman*)]
    \item $\calX$ is a Banach space and $\calY,\calU,\calP$ are Hilbert spaces,
    \item $\calA\in L(\calX,\calP^*)$ is continuously invertible and encodes some ODE or PDE,
    \item $\calB \in L(\calU,\calP^*)$ is an input operator and  $\calC\in L(\calX,\calY)$ is an observation operator,
    \item[(iv)] $y_\mathrm{ref} \in \calY$ is a reference output, $f \in \calP^*$ is a forcing term, and $\alpha > 0$ is a regularization parameter.
\end{enumerate}
Here, the constraint could correspond either to a static problem that is, an elliptic PDE for which $\calA$ is usually a second-order (elliptic) operator or a time-dependent problem such as a parabolic or hyperbolic partial differential equation, where the constraint operator $\calA$ involves space and time derivatives. Such linear-quadratic problems occur, e.g., in in each step of an SQP-method applied to nonlinear problems.

One of the simplest approaches to solve \eqref{eq:OCP} is to eliminate the PDE-constraint using invertibility of $\mathcal{A}$, that is, leveraging the control-to-state map $u \mapsto \calA^{-1}\calB u$. Hence, setting $x=\mathcal{A}^{-1}(\calB u + f)$ one may eliminate the state from the optimal control problem and consider the reduced or condensed problem. Then, standard approaches such as gradient methods may be applied \cite{hinze2008optimization,Tro10}. Alternatively, one may keep the constraint and introduce a Lagrange multiplier and derive suitable first-order optimality conditions in function space. Then, these could be approached by linear or nonlinear solvers, leading to an algorithm in state, control and adjoint variables such as conjugate gradient methods endowed with suitably designed constraint preconditioners \cite{keller2000constraint,rees2010optimal,schiela2014operator}. %

 In both of these approaches, however, the condition number of the systems to be solved strongly depends on the norm of the solution operator $\|\mathcal{A}^{-1}\|_{\calP^*\to \calX}$, where $\| \cdot \|_{\calP^* \to \calX}$ denotes the standard operator norm in $L(\calP^*,\calX)$. This operator norm, however, can be prohibitively large, e.g., when considering unstable time-dependent problems on large time horizons or elliptic problems with small ellipticity constant. While usually this is treated as given by the problem, we show in this work how to shape the constraint operator $\mathcal{A}$ by an equivalence transformation such that the transformed problem is subject to an operator $\widetilde{\mathcal{A}} = \calA + \calB\calK$, where $\calK \in L(\calX,\calU)$ is a feedback operator leading to the property $\|{\widetilde{\calA}}^{-1}\|_{\calP^*\to \calX} \ll \|{\calA}^{-1}\|_{\calP^*\to \calX}$. Feedback transformations and feedback stabilization are classical and well-understood tools in mathematical systems theory~\cite{hinrichsen2026mathematical}. While they are typically studied from a control-theoretic perspective, we show that they can also be leveraged in optimal control to drastically improve the conditioning and performance of numerical solvers.

 The central tools are feedback stabilizations that are borrowed from mathematical systems theory where they are usually used to stabilize dynamic control systems; here, we showcase how they may also be leveraged for reformulations in optimal control in view of efficiency and robustness of numerical methods.

To illustrate the guiding idea of this work,  we introduce a \emph{state feedback operator} $\mathcal{K}:\calX \to \calU$ and substitute 
\begin{equation}\label{eq:feedback}
    u=\mathcal{K}x+v,
\end{equation}
where $x\in \calX$ is the state and $v\in \mathcal{U}$ is a new control variable. In this new control variable, the OCP \eqref{eq:OCP} reads
\begin{equation}\label{eq:OCP2}
    \min_{(x,v)\in \calX \times \calU} \frac12\|\mathcal{C}x-y_\mathrm{ref}\|_\calY^2 + \frac\alpha2 \|\mathcal{K}x+v\|_\calU^2 \quad \mathrm{s.t.} \quad (\mathcal{A}-\mathcal{B}\mathcal{K})x - \mathcal{B}v = f.
\end{equation}
This problem is clearly equivalent to the problem~\eqref{eq:OCP} as the transformation \eqref{eq:feedback} can easily be inverted via $v = u - \calK x$, hence it defines a bijection (independently of $x\in \calX$). However, as illustrated in Figure~\ref{fig:spoilerfig}, a suitably chosen feedback operator $\calK$ leads to strongly improved convergence behavior of iterative numerical optimal control solvers applied to \eqref{eq:OCP2} instead of \eqref{eq:OCP}.

In the following we abbreviate the norm of the stabilized control-to-observation map as follows:
\begin{equation}\label{eq:norm-c-to-obs}
\sigma_\calK := \|\calC(\calA-\calB\calK)^{-1}\calB\|_{\calU \to \calY}
\end{equation}
so that the unfeedbacked cases is recovered by $\sigma_0=\|\calC\calA^{-1}\calB\|_{\calU \to \calY}$ which relates to the original control-to-observation map of \eqref{eq:OCP}.

\begin{figure}[htb]
\centering
\scalebox{0.73}{
\begin{tikzpicture}
\node (a) at (0,0)
{
    \scalebox{0.9}{%

\begin{tikzpicture}

\begin{axis}[
    width=0.35\textwidth,
    height=0.3\textwidth,
    xlabel={CG iterations},
    ylabel={Relative error},
    xmin=5, xmax=50,
    xtick={5,10,15,20,25,30,35,40,50},
    ymode=log,
    ytickten={0,-2,-4,-6,-8,-10,-12,-14},
    grid=both,
    legend style={
            font=\footnotesize,
        at={(.4,.5)},
        anchor= west
    },
    thick,
    title={Discrete scalar problem}
]
\addlegendimage{dl1}
\addlegendentry{Stable}

\addlegendimage{dl2}
\addlegendentry{Unstable}

\addplot[mark=o, blue,densely dashed,    mark options={solid}] coordinates {
    (5, 0.04235891123124236)
    (10, 0.003048305976226194)
    (15, 0.00020441133789705587)
    (20, 4.822765685728518e-06)
    (25, 4.13927970851602e-08)
    (30, 1.1200188109752822e-09)
    (35, 2.3738785083640106e-11)
    (40, 3.26343209155649e-13)
    (50, 1.4792847974608376e-15)
};
\addplot[mark=square*, red!70!black,densely dashed,     mark options={solid}] coordinates {
    (5, 0.0015160728114093625)
    (10, 4.236997471622918e-07)
    (15, 1.1119691408312645e-10)
    (20, 1.4134736097013663e-14)
    (25, 0.0)
    (30, 0.0)
    (35, 0.0)
    (40, 0.0)
    (50, 0.0)
};

\addplot[mark=o, blue] coordinates {
    (5, 0.1863237056771352)
    (10, 0.05402498057324269)
    (15, 0.025819768885334955)
    (20, 0.007396670083771834)
    (25, 0.0036331373561403614)
    (30, 0.003097873548688374)
    (35, 0.0013303865047986766)
    (40, 0.0007100649639852083)
    (50, 0.000172465082642948)
};
\addplot[mark=square*, red!70!black] coordinates {
    (5, 0.0003424978231342578)
    (10, 5.569385315404921e-09)
    (15, 8.448807074076804e-14)
    (20, 0.0)
    (25, 0.0)
    (30, 0.0)
    (35, 0.0)
    (40, 0.0)
    (50, 0.0)
};

\end{axis}

\end{tikzpicture}
}
};
\node (b) at (5.5,0)
{
    \scalebox{0.9}{%

\begin{tikzpicture}

\begin{axis}[
    width=0.35\textwidth,
    height=0.3\textwidth,
    xlabel={CG iterations},
    xmin=10, xmax=30,
    xtick={10,20,30},
    ymode=log,
    ytickten={0,-2,-4,-6,-8,-10,-12,-14},
    grid=both,
    legend style={
        font=\footnotesize,
        at={(.2,1.1)},
        anchor=north west
    },
    thick,
    title={Elliptic problem}
]

\addlegendimage{dl1}
\addlegendentry{Large reac.\ }

\addlegendimage{dl2}
\addlegendentry{Small reac.\ }

\addplot[mark=o, blue,densely dashed,    mark options={solid}] coordinates {
    (10, 0.011733192233152737)
    (20, 3.274645659341223e-05)
    (30, 8.762544894739712e-08)
};
\addplot[mark=square*, red!70!black,densely dashed,    mark options={solid}] coordinates {
    (10, 0.0056951952936283535)
    (20, 5.507430247454323e-06)
    (30, 9.7472176661239e-09)
};

\addplot[mark=o, blue] coordinates {
    (10, 0.10641841984860713)
    (20, 0.004220039417801839)
    (30, 0.00023336971197845894)
    (40, 2.2300893678039672e-05)
};
\addplot[mark=square*, red!70!black] coordinates {
    (10, 0.008633129418152677)
    (20, 5.283177595836857e-06)
    (30, 1.1323333055598219e-08)
    (40, 2.309752168457207e-09)
};

\end{axis}

\end{tikzpicture}
}
};
\node (c) at (10.7,0)
{
    \scalebox{0.9}{%
\begin{tikzpicture}

\begin{axis}[
    width=0.35\textwidth,
    height=0.3\textwidth,
    xlabel={CG iterations},
    xmin=10, xmax=400,
    xtick={10,100,200,400},
    ymode=log,
    ytickten={0,-2,-4,-6,-8,-10,-12,-14},
    grid=both,
    legend style={
        at={(.5,1.1)},
        anchor= west
    },
    thick,
    title={Parabolic problem}
]

\addplot[mark=o, blue,densely dashed,    mark options={solid}] coordinates {
    (10, 0.8204462370764743)
    (20, 0.12301865727156086)
    (50, 0.0014286185413296363)
    (100, 1.1386718166156318e-08)
};
\addplot[mark=square*, red!70!black,densely dashed,    mark options={solid}] coordinates {
    (10, 0.30263638912738344)
    (20, 0.00884505808847306)
    (50, 1.0855608805835202e-07)
    (100, 7.924603262603529e-11)
};

\addplot[mark=o, blue] coordinates {
    (10, 0.9802554530941032)
    (20, 0.8286049752321615)
    (50, 0.3773840497283715)
    (100, 0.11251467131309723)
    (150, 0.0145298363665835)
    (200, 0.004216259455460214)
    (400, 1.943368372726511e-05)
};
\addplot[mark=square*, red!70!black] coordinates {
    (10, 0.5391080260343237)
    (20, 0.12628053278833976)
    (50, 0.0005799841527966209)
    (100, 1.2874032088309154e-08)
    (150, 3.8627187159246203e-13)
    (200, 0.0)
    (400, 0.0)
};

\end{axis}

\end{tikzpicture} %
}
};
\node (d) at (15,0)
{
    \scalebox{.9}{%
\begin{tikzpicture}

\begin{axis}[
    width=0.35\textwidth,
    height=0.3\textwidth,
    xlabel={CG iterations},
    xmin=10, xmax=200,
    xtick={20,50,100,200},
    ymode=log,
    ytickten={0,-2,-4,-6,-8,-10,-12,-14},
    grid=both,
        legend style={
        font=\footnotesize,
        at={(-.5,0.4)},
        anchor=north west
    },
    thick,
    title={Hyperbolic problem},
yticklabel pos=right
]

\addlegendimage{dl1}
\addlegendentry{Short horizon}

\addlegendimage{dl2}
\addlegendentry{Long horizon}

\addplot[mark=o, blue,densely dashed,    mark options={solid}] coordinates {
    (10, 0.6606344295840225)
    (20, 0.23851233397327157)
    (50, 0.01505220619328308)
    (100, 8.088854116884556e-05)
    (200, 4.616527754370169e-09)
};
\addplot[mark=square*, red!70!black,densely dashed,    mark options={solid}] coordinates {
    (10, 0.5779625783515272)
    (20, 0.20750711355316964)
    (50, 0.0055247715401029185)
    (100, 2.202974583132827e-05)
    (200, 4.119937050485917e-10)
};

\addplot[mark=o, blue] coordinates {
    (10, 0.9358957976889742)
    (20, 0.7450075952725995)
    (50, 0.39316701989374675)
    (100, 0.11787877169081454)
    (200, 0.016509783845688457)
};
\addplot[mark=square*, red!70!black] coordinates {
    (10, 0.8086783704646525)
    (20, 0.5874916599975385)
    (50, 0.1898510101126374)
    (100, 0.037881820964390245)
    (200, 0.001293915246537536)
};

\end{axis}

\end{tikzpicture} %
}
};
\node[
    draw,
    thick,
    rounded corners,
    inner sep=0.35cm,
    fit=(a)(b)(c)(d),
    label=above:{\Large Condensed gradient method}
] (upperbox) {};

\hspace{1.5cm}
\node (e) at (1,-6.7)
{
    \scalebox{0.9}{%
\begin{tikzpicture}

\begin{axis}[
    width=0.35\textwidth,
    height=0.3\textwidth,
    xlabel={CG iterations},
    ylabel={Relative error},
    xmin=1, xmax=80,
    xtick={10,20,30,40,50,60,70,80},
    ymode=log,
    ytickten={0,-2,-4,-6,-8,-10,-12,-14},
    grid=both,
      legend style={
            font=\footnotesize,
        at={(.5,1.1)},
        anchor=north west
    },    thick,
    title={Elliptic problem}
]

\addlegendimage{dl1}
\addlegendentry{Large reac.\ }

\addlegendimage{dl2}
\addlegendentry{Small reac.\ }

\addplot[mark=o, blue,densely dashed,    mark options={solid}] coordinates {
    (2, 0.04941758191711827)
    (4, 0.041436358256348296)
    (8, 0.02568596087456935)
    (15, 0.004946684162894679)
    (25, 0.0012084860081291373)
    (50, 7.129802433804703e-06)
    (75, 8.337032976813433e-08)
    (100, 1.7803515706513951e-09)
};
\addplot[mark=square*, red!70!black,densely dashed,    mark options={solid}] coordinates {
    (2, 0.04930440410061063)
    (4, 0.03722440928886701)
    (8, 0.013157310461967477)
    (15, 0.002412359042185279)
    (25, 0.0001442648043380194)
    (50, 2.1375563926056738e-07)
    (75, 1.7172529261798029e-09)
    (100, 1.0459318892870784e-11)
};

\addplot[mark=o, blue] coordinates {
    (2, 0.05238257281489796)
    (4, 0.04941993147227241)
    (8, 0.03723719612965626)
    (15, 0.010736828818260553)
    (25, 0.0050314560491186286)
    (50, 0.000535200602925313)
    (75, 5.8784638994884564e-05)
    (100, 1.1259554690676407e-05)
};
\addplot[mark=square*, red!70!black] coordinates {
    (2, 0.04930685303952718)
    (4, 0.040380852926308254)
    (8, 0.014763042615168049)
    (15, 0.0024128086422607973)
    (25, 0.00012162722684809979)
    (50, 2.2556879783423876e-07)
    (75, 9.453292612667716e-10)
    (100, 8.07171153385791e-12)
};

\end{axis}

\end{tikzpicture} %
}
};
\node (f) at (6.5,-6.7)
{
    \scalebox{0.9}{%
\begin{tikzpicture}

\begin{axis}[
    width=0.35\textwidth,
    height=0.3\textwidth,
    xlabel={CG iterations},
    xmin=5, xmax=100,
    xtick={10,30,50,70,100},
    ymode=log,
    ytickten={0,-2,-4,-6,-8,-10,-12,-14},
    grid=both,
    legend style={
        font=\footnotesize,
        at={(0,.4)},
        anchor=north west
    },
    thick,
    title={Parabolic problem}
]

\addlegendimage{dl1}
\addlegendentry{Stable}

\addlegendimage{dl2}
\addlegendentry{Unstable}

\addplot[mark=o, blue,densely dashed,    mark options={solid}] coordinates {
    (5, 0.8752619652898339)
    (10, 0.7823320151536354)
    (20, 0.3175891630951712)
    (30, 0.12515734706283974)
    (40, 0.06395796658386114)
    (50, 0.012722478465550543)
    (70, 0.001869615554663436)
    (100, 6.397775209645658e-05)
};
\addplot[mark=square*, red!70!black,densely dashed,    mark options={solid}] coordinates {
    (5, 0.7772406971596546)
    (10, 0.5557916930398292)
    (20, 0.14481341422679953)
    (30, 0.05392831574668583)
    (40, 0.009332051506731557)
    (50, 0.001753821274181866)
    (70, 8.197220252472123e-05)
    (100, 8.24252345299534e-07)
};

\addplot[mark=o, blue] coordinates {
    (5, 0.9995713035120756)
    (10, 0.9995712929177519)
    (20, 0.9953905894953937)
    (30, 0.995390592951449)
    (40, 0.9733121069420667)
    (50, 0.9733021753363749)
    (70, 0.9295989094586826)
    (100, 0.8984810405129386)
};
\addplot[mark=square*, red!70!black] coordinates {
    (5, 0.7948814185810744)
    (10, 0.5754407416104831)
    (20, 0.1551520811269918)
    (30, 0.062324726413066904)
    (40, 0.010565865682195941)
    (50, 0.0023274306220867416)
    (70, 0.00010917466373715135)
    (100, 1.3956089051444946e-05)
};

\end{axis}

\end{tikzpicture} %
}
};
\node (g) at (12.2,-6.7)
{
    \scalebox{0.9}{%
\begin{tikzpicture}

\begin{axis}[
    width=0.35\textwidth,
    height=0.3\textwidth,
    xlabel={CG iterations},
    xmin=5, xmax=100,
    xtick={10,30,50,70,100},
    ymode=log,
    ytickten={0,-2,-4,-6,-8,-10,-12,-14},
    grid=both,
    legend style={
            font=\footnotesize,
        at={(.5,1.1)},
        anchor=north west
    },
    thick,
    title={Hyperbolic problem}
]

\addlegendimage{dl1}
\addlegendentry{Short horizon}

\addlegendimage{dl2}
\addlegendentry{Long horizon}

\addplot[mark=o, blue,densely dashed,    mark options={solid}] coordinates {
    (5, 0.7233214322811498)
    (10, 0.46343415405795013)
    (20, 0.11359970835551332)
    (30, 0.021643844094331417)
    (40, 0.008460147300783163)
    (50, 0.0026601395416901395)
    (70, 0.00016416335573618586)
    (100, 2.3862182700344523e-06)
};
\addplot[mark=square*, red!70!black, densely dashed,    mark options={solid}] coordinates {
    (5, 0.39319084407354893)
    (10, 0.0789630512648929)
    (20, 0.009069307755050389)
    (30, 0.0010619157247466439)
    (40, 8.111755649120188e-05)
    (50, 1.3248166691609693e-05)
    (70, 1.5636280758097447e-07)
    (100, 2.22646260037352e-10)
};

\addplot[mark=o, blue] coordinates {
    (5, 0.6948124208693326)
    (10, 0.46266920379974535)
    (20, 0.23795308959141517)
    (30, 0.02912696681086819)
    (40, 0.014429448303085604)
    (50, 0.0053772673676302845)
    (70, 0.0006680314243765)
    (100, 1.939757326766026e-05)
};
\addplot[mark=square*, red!70!black] coordinates {
    (5, 0.3965342315572299)
    (10, 0.08048055639922759)
    (20, 0.009069689010035933)
    (30, 0.0014628224177524172)
    (40, 0.0001447487151987071)
    (50, 2.2687018771083166e-05)
    (70, 4.1129797404699084e-07)
    (100, 8.738023919440107e-10)
};

\end{axis}

\end{tikzpicture} %
}
};
\node[
    draw,
    thick,
    rounded corners,
    inner sep=0.35cm,
    fit=(e)(f)(g),
    label={[yshift=-0.2cm]below:{\Large Projected preconditioned conjugate gradient method}}
    ] (lowerbox) {};

\node at (7,-3.35)
{
     \Large \textcolor{blue}{\Large Original problem \eqref{eq:OCP}} and \textcolor{red!70!black}{feedback-reformulated problem \eqref{eq:OCP2}}
};

\end{tikzpicture}}

\caption{Efficacy of the suggested feedback transformation for iterative optimal control solvers.}
\label{fig:spoilerfig}

\end{figure}

\textbf{Code availability}. The code for all numerical experiments presented in this work is available in the repository 
\begin{center}
   {\url{www.mytuc.org/mxxc}}
\end{center}

\section{Iterative methods for PDE-constrained optimal control}
In this part, we introduce two well-established methods in numerical optimal control that particularly benefit from our suggested feedback stabilization, namely a gradient method for the reduced problem as introduced in Subsection~\ref{subsec:gradintro} and a preconditioned conjugate gradient method for the optimality system as recalled in Subsection~\ref{subsec:ppcgintro}. 

In the context of iterative solvers for the optimality system corresponding to \Cref{eq:OCP2} it is crucial to design preconditioners that are tailored to the underlying problem formulation. The framework of \textit{operator preconditioning} for such systems (cf. \cite{mardal2011preconditioning,malek2014preconditioning,schiela2014operator}) requires bounding the \textit{energy} of the operator inner product by the preconditioner inner product.

While the most descriptive measure for convergence of Krylov methods is the distribution of the eigenvalues \cite{liesen2012krylov} it is often difficult to obtain robust bounds for the eigenvalues in context of optimal control, see \cite{pearson2012regularization} for the heat equation with full observation. A measure that is also very informative for the convergence behavior is the condition number \cite{saad2003iterative} giving rise to relative energy error estimates of the form
\begin{equation*}
\frac{\|e_k\|_{A}}{\|e_0 \|_{A}} \le 2 \left(\frac{\sqrt{\kappa(A)} - 1}{\sqrt{\kappa(A)} + 1} \right)^k 
\end{equation*}
 with $e_k$ being the error at step $k\in \mathbb{N}$ of the conjugate gradient method \cite{hestenes1952methods}. Hence, an estimate of the condition number $\kappa$ can be central in the convergence analysis. Moreover, independence of the condition number w.r.t.\ problem parameters allows for robust convergence rates in the considered system class.
  The condition number of a symmetric linear operator $A: \calU \to\calU^*$ with respect to a preconditioner $Q:\calU \to \calU^*$ can be computed as follows. Assume that there are $0 <m \le M$, such that 
  \[
    m \langle v,Qv\rangle_{\calU,\calU^*} \le \langle v,Av\rangle_{\calU,\calU^*} \le M \langle v, Q v\rangle_{\calU,\calU^*} \quad \forall v\in \calU.
  \]
  If the constants in these estimates are sharp, then the condition number of $A$ relative to $Q$ is given as
  \[
    \kappa(A) := \frac{M}{m}.
  \]

\subsection{Gradient method for the reduced problem}\label{subsec:gradintro}
Due to continuous invertibility of the state operator associated with the constraint, we may eliminate the state via $x = \calA^{-1}\calB u + \calA^{-1}f$ such that we may equivalently solve the unconstrained problem
\begin{equation}\label{eq:redprob}
    \min_{u\in \calU} h(u) := \frac12\|\calC(\calA^{-1}\calB u + \calA^{-1}f) - y_\mathrm{ref}\|_\calY^2 + \frac\alpha2 \|u\|_\calU^2.
\end{equation}
Due to strict convexity of the quadratic function $h:\calU \to \R$, the solution of this minimization problem amounts to solving the linear system
\begin{equation}\label{eq:cg}
    0 = h'(u^*) = \left((\calC\calA^{-1}\calB)^*\Ry\calC\calA^{-1}\calB+ \alpha \Ru \right)u^* + (\calC\calA^{-1}\calB)^*\Ry(\calC\calA^{-1}f - y_\mathrm{ref}).
\end{equation}
Here, $\Ry: \calY \to \calY^*$ and $\Ru: \calU \to \calU^*$ denote the Riesz isomorphisms corresponding to the Hilbert input and output spaces, respectively. A gradient direction can be computed by solving the Riesz equation on $\calU$:
\[
 \nabla h(u) = \Ru^{-1}h'(u).
\]
As the system \eqref{eq:cg} is governed by a self-adjoint operator 
\begin{equation*}
    \calM_\mathrm{red} : \calU \to \calU^*, \qquad \calM_\mathrm{red}:= (\calC\calA^{-1}\calB)^*\Ry\calC\calA^{-1}\calB+ \alpha \Ru,
\end{equation*}
a suitable method to solve it is the conjugate gradient method, where in each iteration the evaluation of the governing operator $\calM_\mathrm{red}$ involves one solution of the state equation (due to $\calA^{-1}$) and one solution of the adjoint equation (due to $\calA^{-*}$). To assess the convergence behavior of a conjugate gradient method applied to \eqref{eq:cg}, we estimate the relevant condition number. %

For the self-adjoint operator defined above, we obtain the following well-known bound on the condition number, see e.g.~\cite{lubkoll2017affine}. Note that the proof follows also as a particular case of Proposition~\ref{prop:grad_cond_feedbacked}.
\begin{proposition}\label{prop:gradient:condition}
        With the norm of the control-to-observation operator $\sigma_0=\|\calC \calA^{-1}\calB\|_{\calU\to \calY}$ as defined in \eqref{eq:norm-c-to-obs}, we have for all $u\in \calU$
        \begin{equation*}
            \alpha\|u\|^2_\calU \leq \langle u,\calM_\mathrm{red} u\rangle_{\calU,\calU^*} \leq (\alpha + \sigma_0^2)\|u\|^2_\calU,
        \end{equation*}
        which implies the following condition number estimate:
\begin{equation}\label{eq:kappa_estimate_original}
            \kappa(\calM_\mathrm{red}) \leq 1 + \frac{\sigma_0^2}{\alpha}.
        \end{equation}
\end{proposition}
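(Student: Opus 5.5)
The plan is to evaluate the quadratic form $\langle u,\calM_\mathrm{red}u\rangle_{\calU,\calU^*}$ directly, using the definitions of the adjoint and of the Riesz maps, and then read off the condition number from the resulting two-sided bound.

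First I write $S := \calC\calA^{-1}\calB \in L(\calU,\calY)$. By assumption $\calA$ is continuously invertible, and $\calB,\calC$ are bounded, so $S$ is bounded. For $u\in\calU$ the Banach-space adjoint $S^*:\calY^*\to\calU^*$ satisfies
\[
\langle u, S^*\Ry S u\rangle_{\calU,\calU^*} = \langle Su, \Ry S u\rangle_{\calY,\calY^*} = \|Su\|_\calY^2 ,
\]
and the Riesz map on $\calU$ gives $\langle u,\Ru u\rangle_{\calU,\calU^*} = \|u\|_\calU^2$. Adding the two terms yields the identity
\[
\langle u,\calM_\mathrm{red} u\rangle_{\calU,\calU^*} = \|Su\|_\calY^2 + \alpha\|u\|_\calU^2 .
\]

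Both inequalities then follow from this identity.
\begin{itemize}
\item For the lower bound, I drop the nonnegative term $\|Su\|_\calY^2$.
\item For the upper bound, I use $\|Su\|_\calY\le \sigma_0\|u\|_\calU$, which holds by the definition of $\sigma_0$ in \eqref{eq:norm-c-to-obs}.
\end{itemize}

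For the condition number, take the preconditioner $Q=\Ru$, so that $\langle u,Qu\rangle = \|u\|_\calU^2$. The bounds then hold with $m=\alpha$ and $M=\alpha+\sigma_0^2$. The definition of $\kappa$ in the paper assumes the constants are sharp. The sharp constants $m^*\ge m$ and $M^*\le M$ therefore give
\[
\kappa = \frac{M^*}{m^*} \le \frac{M}{m} = 1 + \frac{\sigma_0^2}{\alpha}.
\]

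The only point needing care is this last step: the paper defines $\kappa$ through sharp constants, so it must be stated that any valid pair of constants gives an upper bound on $\kappa$. Once that is made explicit, the rest of the argument is routine.
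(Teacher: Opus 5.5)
Your proposal is correct and is essentially the paper's argument: the paper obtains this result as the case $\calK=0$ of Proposition~\ref{prop:grad_cond_feedbacked}, whose proof starts from exactly your identity $\langle u,\calM_\mathrm{red}u\rangle_{\calU,\calU^*}=\|\calC\calA^{-1}\calB u\|_\calY^2+\alpha\|u\|_\calU^2$ and bounds it using $\sigma_0$. Your lower bound, obtained by dropping the nonnegative term, is slightly cleaner than specializing the Young-inequality step there, which formally needs $\delta_\calK>0$ and then a limit $\delta_\calK\to 0$.
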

This result %
shows that the bound on the condition number strongly depends on the inverse operator $\|\calA^{-1}\|_{\calP^* \to \calX}$. In Section~\ref{sec:appl}  we will show for various applications of elliptic, parabolic and hyperbolic equations that this operator norm may become prohibitively large and that the feedback transformation, that is, in essence replacing $\calA$ by $\calA-\calB\calK$ as in \eqref{eq:OCP2} may alleviate this effect.

\subsection{Preconditioned projected conjugate gradients}\label{subsec:ppcgintro}
The second approach used to solve the optimal control problem \eqref{eq:OCP} builds upon the first-order optimality condition of the constrained problem without eliminating the state. 

To this end, we recall the optimality conditions of the abstract optimal control problem \eqref{eq:OCP}, see, e.g., \cite[Theorem 1.1]{schiela2013concise}. Therein, the central assumption is that the constraint operator $[\calA,-\calB]$ has closed range, which in our setting follows from the fact that $\calA$ is an isomorphism, hence surjective such that $[\calA,-\calB]$ is surjective (and thus has closed range) as well.
Let $(x,u)\in \calX \times \calU$ be optimal for \eqref{eq:OCP}. Then, there is an adjoint state $p\in \calP^*$ such that 
\begin{equation}\label{eq:optsys}
\calM_\mathrm{opt}\begin{bmatrix}
    x\\u\\p
\end{bmatrix}=
\begin{bmatrix}
    \calC^*\Ry\calC & 0 & \calA^*\\
    0 & \alpha \Ru & -\calB^*\\
    \calA& -\calB &0
\end{bmatrix}\begin{bmatrix}
    x\\u\\p
\end{bmatrix}
= 
\begin{bmatrix}
    \calC^*y_\mathrm{ref}\\
    0\\
    f
\end{bmatrix}.
\end{equation}
While this system is symmetric, it is indefinite due to the presence of the zero block in the bottom right. Hence, to apply a conjugate gradient method, one may incorporate a constraint preconditioner
\begin{equation}\label{eq:const_prec}
  \calQ = \begin{bmatrix}
    0 & 0& \calA^*\\
    0& \alpha\Ru & -\calB^*\\
    \calA & -\calB & 0
\end{bmatrix}
\end{equation}
which is invertible due to invertibility of $\calA$ (cf. \cite{keller2000constraint,schiela2014operator,rees2010optimal}).
Due to its triangular structure, application of its inverse decouples into one adjoint solve, the solution of the Riesz equation $\alpha \Ru w = v$ and one state solve. 

When endowed with this preconditioner, it is guaranteed that the state and control iterates of the conjugate gradient method evolve in the subspace $\ker [\calA,-\calB]\times \calP$ on which the governing operator $\calM_\mathrm{opt} : \calX\times\calU\times\calP \to (\calX\times\calU\times\calP)^*$ as defined in \eqref{eq:optsys} is positive definite.

When using a constraint preconditioner, the speed of convergence of PPCG is governed by the condition number of the top left $2\times2$ block of $\calM_\mathrm{opt}$ with respect to the top left $2\times 2$ block of $\calQ$ on $\ker [\calA,-\calB] = \{(\calA^{-1}\calB u, u)\,|\,u\in \calU\}$. 

    One may use this insight to actually show the equivalence of the two condition number estimates of Proposition~\ref{prop:gradient:condition} and Proposition~\ref{prop:ppcg:condition}. In particular, we note that for $z:= (x,u,p)\in \calZ := \calX \times \calU \times \calP$ with $(x,u)\in \ker [\calA,-\calB]$,
    \begin{align}\label{eq:itsthesame}
        \langle z,\calM_\mathrm{opt}z\rangle_{\calZ,\calZ^*} = \|\calC x\|^2_\calY + \alpha\|u\|^2_\calU = \|\calC \calA^{-1}\calB u\|^2_\calY + \alpha\|u\|^2_\calU = \langle u, \calM_\mathrm{red}u\rangle_\calU.
    \end{align}
    In this sense, the scalar product induced by the restriction of $\calM_\mathrm{opt}$ to $\ker [\calA,-\calB] \times \calP$ (as done for the PPCG method in Proposition~\ref{prop:ppcg:condition}) coincides with the scalar product induced by $\calM_\mathrm{red}$. Hence, we immediately obtain the following result.
\begin{proposition}\label{prop:ppcg:condition}
For all $z = (x,u,p)\in \calZ:=\calX \times \calU \times \calP$ with $(x,u)\in \ker [\calA,-\calB]$,
\begin{equation*}
    \langle z, \calQ z\rangle_{\calZ,\calZ^*} \leq \langle z,\calM_\mathrm{opt}z\rangle_{\calZ,\calZ^*} \leq \left(1+\tfrac{\sigma_0^2}{\alpha}\right)\langle z,\calQ z\rangle_{\calZ,\calZ^*} 
\end{equation*}
and hence we have the condition number estimate~\eqref{eq:condnumber} for $\kappa(\mathcal{M}_\mathrm{opt})$.
\end{proposition}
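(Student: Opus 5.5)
The plan is to reduce everything to Proposition~\ref{prop:gradient:condition} via the identity \eqref{eq:itsthesame}. I first compute both quadratic forms explicitly for $z=(x,u,p)$ with $(x,u)\in\ker[\calA,-\calB]$.

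For the constraint preconditioner \eqref{eq:const_prec}, expanding the block product gives
\[
\langle z,\calQ z\rangle_{\calZ,\calZ^*} = \langle x,\calA^*p\rangle + \alpha\langle u,\Ru u\rangle - \langle u,\calB^*p\rangle + \langle p,\calA x-\calB u\rangle .
\]
The first and third terms combine to $\langle \calA x-\calB u,p\rangle$. Since $\calA x=\calB u$ on the kernel, both this combination and the last term vanish, leaving $\langle z,\calQ z\rangle=\alpha\|u\|_\calU^2$.

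For the system operator, the same expansion for $\calM_\mathrm{opt}$ yields $\langle z,\calM_\mathrm{opt}z\rangle=\|\calC x\|_\calY^2+\alpha\|u\|_\calU^2$, because the $p$-coupling terms are identical to those in $\calQ$ and cancel in the same way. The only additional term is the $\calC^*\Ry\calC$ block, which contributes $\|\calC x\|^2_\calY$. Substituting $x=\calA^{-1}\calB u$, which is valid by invertibility of $\calA$, gives exactly \eqref{eq:itsthesame}:
\[
\langle z,\calM_\mathrm{opt}z\rangle = \langle u,\calM_\mathrm{red}u\rangle .
\]

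Now I invoke Proposition~\ref{prop:gradient:condition}, which gives
\[
\alpha\|u\|^2_\calU\le\langle u,\calM_\mathrm{red}u\rangle\le(\alpha+\sigma_0^2)\|u\|_\calU^2 .
\]
Replacing $\alpha\|u\|^2_\calU$ by $\langle z,\calQ z\rangle$ yields the lower bound directly. For the upper bound, I write $\alpha+\sigma_0^2=(1+\sigma_0^2/\alpha)\,\alpha$, so the upper estimate becomes $(1+\sigma_0^2/\alpha)\langle z,\calQ z\rangle$. The condition number bound $\kappa\le 1+\sigma_0^2/\alpha$ then follows from the definition of $\kappa$ as the ratio $M/m$, with $m=1$ and $M=1+\sigma_0^2/\alpha$.

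The computation itself is routine, so the main point needing care is the sign bookkeeping of the dual pairings in the off-diagonal blocks. One has to check that the terms $\langle x,\calA^*p\rangle$ and $-\langle u,\calB^*p\rangle$ really combine into $\langle\calA x-\calB u,p\rangle$, which vanishes on the kernel. A related subtlety is that $\calQ$ is only positive semidefinite on this set, degenerate in $p$. The estimate should therefore be read on $\ker[\calA,-\calB]\times\calP$ modulo the $p$-component, which is precisely the setting in which PPCG iterates evolve.
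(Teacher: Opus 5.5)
Your proposal is correct and is essentially the paper's argument. The paper also derives the statement immediately from the identity \eqref{eq:itsthesame} combined with Proposition~\ref{prop:gradient:condition}; your explicit check that $\langle z,\calQ z\rangle_{\calZ,\calZ^*}=\alpha\|u\|_\calU^2$ on $\ker[\calA,-\calB]\times\calP$ spells out a step the paper leaves implicit, and your bound $\kappa(\calM_\mathrm{opt})\le 1+\sigma_0^2/\alpha$ is \eqref{eq:kappa_estimate_original}, which is what \eqref{eq:condnumber} reduces to for $\calK=0$ after letting $\delta_\calK\to 0$.
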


\subsection{Illustrative scalar example}\label{subsec:easyex}
In this part, we briefly provide a very simple scalar problem to illustrate the main idea of the suggested approach. %
For more complex problems, we refer to Section~\ref{sec:appl} where we illustrate for a wide range of partial differential equations (such as elliptic, hyperbolic and parabolic problems) how to suitably choose the feedback operator to obtain a small solution operator norm in the transformed problem. %
For now, let us consider the discrete-time problem
\begin{align}\label{eq:discopt}
\begin{split}
            \frac12\sum_{k=1}^{N} |x_k - 5|^2 + \frac\alpha2 \sum_{k=0}^{N-1}  |u_k|^2.
 \quad \mathrm{s.t.}\quad x_{k+1} &= ax_k + u_k, \ k=0,\ldots N-1, \\[-.3cm]
 x_0 &= 0.
\end{split}
\end{align}
Here, $\calU = \calX = \calP=\calY = \R^N$ and $0\leq a\in \R$ is a parameter to be specified.
It is easily seen that the control-to-state map $\mathcal{A}^{-1}$ corresponds to solving the forward iteration of the discrete time constraint, that is,
\begin{align}\label{eq:gradient:calA}
    \begin{smallbmatrix}
        x_{1}\\x_{2}\\\vdots\\x_{N}
    \end{smallbmatrix}
     = 
     \begin{smallbmatrix}
       u_0\\
       a x_0 + u_1\\
       \vdots \\
       a x_{N-1} + u_{N-1}
     \end{smallbmatrix}
     =
     \begin{smallbmatrix}
        1 & 0 & 0 & \ldots & 0 \\
        a & 1 & 0 & \ldots & 0 \\
        a^2 & a & 1 & \ldots & 0 \\
        \vdots & \vdots & \ddots & \vdots & \vdots  \\
         a^{N-1}  & \ldots & \ldots & a & 1
    \end{smallbmatrix}
    \begin{smallbmatrix}
        u_0\\ u_1\\\vdots \\ u_{N-1}
    \end{smallbmatrix} =: T_N u
\end{align}
and input resp.\ output operators are just identity matrices $\calC = \calB = I \in \R^{N\times N}$.
Hence, one may easily show that (see Appendix~\ref{subsec:toeplitz})
{\begin{small}
\begin{equation}\label{eq:toeplitz}
\sigma_0=\|\calC \calA^{-1} \calB\|_{\R^N\to \R^N} = \|\calA^{-1}\|_{\R^N\to \R^N} 
= 
\begin{cases}
    \mathcal{O}(1) & a < 1, \\
    \mathcal{O}(N) & a = 1, \\   
    \mathcal{O}(a^N) & a > 1.
\end{cases}
\end{equation}
\end{small}}
Hence, if the original problem in the constraint is unstable ($a>1$), the operator norm deteriorates for larger horizons. This reflects the instability of the uncontrolled problem when $a>1$. In this case, $\mathcal{A}^{-1}$ and its adjoint significantly amplify any kind of errors, which relates to the condition number of the governing matrix derived in Proposition~\ref{prop:gradient:condition} or Proposition~\ref{prop:ppcg:condition}. 

However, we may easily stabilize the constraint by introducing a feedback in the sense of \eqref{eq:feedback} via $u=kx+v$ with $k=-a+0.5$ such that $\widetilde \calA$ is governed by the stable equation
\begin{equation*}
    x_{k+1} = ax_k + u = ax_k + (-a+0.5)x_k +v = 0.5x_k + v.
\end{equation*}
Consequently, the associated control-to-state map denoted by $\widetilde \calA^{-1}$ satisfies \eqref{eq:gradient:calA} with $a= 0.5$ such that $\|{\widetilde{\calA}}^{-1}\| \sim \max\{1,|0.5|^{N-1}\} = 1$. In particular, the solution operator norm is bounded uniformly in the time horizon $N$.

We briefly illustrate the effect of the feedback for the gradient method introduced in Subsection~\ref{subsec:gradintro} in Figure~\ref{fig:grad:disc}. We depict the relative error of the control over the iterations for different parameters $(a,\alpha)$ of the problem \eqref{eq:discopt} with horizon $N=100$. We observe that the transformation leads to a better convergence even for the stable problem in the top row of Figure~\ref{fig:grad:disc}. This effect gets more drastic, when going to an unstable problem (bottom row) or when decreasing the Tikhonov parameter~$\alpha$. Importantly, as expected due to the feedback, the convergence behavior for the transformed problem is very robust w.r.t.\ the stability parameter $a$ when comparing the red lines of the top and bottom row of Figure~\ref{fig:grad:disc}. This clearly is not the case for the original problem. Last, we note that, when increasing the instability parameter to $a=1.5$ or increasing the time horizon, the gradient method for the original problem terminates with an overflow error, while the one for the feedback-transformed problem still converges.

\begin{figure}[htb]
    \centering
            \scalebox{0.8}{%
\begin{tikzpicture}

\begin{axis}[
    width=0.35\textwidth,
    height=0.3\textwidth,
    xlabel={CG iterations},
    ylabel={Relative error},
    xmin=5, xmax=50,
    xtick={5,10,15,20,25,30,35,40,50},
    ymode=log,
    ytickten={0,-2,-4,-6,-8,-10,-12,-14},
    grid=both,
    legend pos=north east,
    thick,
    title={$(\alpha, a) = (1,0.8)$}
]
\addplot[mark=o, blue] coordinates {
    (5, 0.04235891123124236)
    (10, 0.003048305976226194)
    (15, 0.00020441133789705587)
    (20, 4.822765685728518e-06)
    (25, 4.13927970851602e-08)
    (30, 1.1200188109752822e-09)
    (35, 2.3738785083640106e-11)
    (40, 3.26343209155649e-13)
    (50, 1.4792847974608376e-15)
};
\addlegendentry{Original}
\addplot[mark=square*, red!70!black] coordinates {
    (5, 0.0015160728114093625)
    (10, 4.236997471622918e-07)
    (15, 1.1119691408312645e-10)
    (20, 1.4134736097013663e-14)
    (25, 0.0)
    (30, 0.0)
    (35, 0.0)
    (40, 0.0)
    (50, 0.0)
};
\addlegendentry{Feedbacked}
\end{axis}

\end{tikzpicture}
}
        \scalebox{0.8}{%
\begin{tikzpicture}

\begin{axis}[
    width=0.35\textwidth,
    height=0.3\textwidth,
    xlabel={CG iterations},
    xmin=5, xmax=50,
    xtick={5,10,15,20,25,30,35,40,50},
    ymode=log,
    ytickten={0,-2,-4,-6,-8,-10,-12,-14},
    grid=both,
    legend pos=north east,
    thick,
    title={$(\alpha, a) = (0.1,0.8)$}
]
\addplot[mark=o, blue] coordinates {
    (5, 0.13873140850197863)
    (10, 0.033373696711890534)
    (15, 0.007316802717585855)
    (20, 0.0008663185837618226)
    (25, 7.82933604391047e-05)
    (30, 7.220154437926613e-06)
    (35, 7.649034185524239e-07)
    (40, 6.344613152002568e-08)
    (50, 2.163884276765665e-10)
};
\addplot[mark=square*, red!70!black] coordinates {
    (5, 0.10716200879746095)
    (10, 0.005312454137016235)
    (15, 8.212561474721467e-05)
    (20, 8.843479687164214e-07)
    (25, 1.5039800822896697e-08)
    (30, 2.228575657499062e-10)
    (35, 3.881362987384525e-12)
    (40, 3.26598958221408e-14)
    (50, 0.0)
};
\end{axis}

\end{tikzpicture}
}   
    \scalebox{0.8}{%
\begin{tikzpicture}

\begin{axis}[
    width=0.35\textwidth,
    height=0.3\textwidth,
    xlabel={CG iterations},
    xmin=5, xmax=50,
    xtick={5,10,15,20,25,30,35,40,50},
    ymode=log,
    ytickten={0,-2,-4,-6,-8,-10,-12,-14},
    grid=both,
    legend pos=north east,
    thick,
    title={$(\alpha, a) = (0.01,0.8)$}
]
\addplot[mark=o, blue] coordinates {
    (5, 0.16678349683608373)
    (10, 0.04825670189579396)
    (15, 0.012688720418909182)
    (20, 0.001921679162908335)
    (25, 0.00017036795449768468)
    (30, 2.4590798156442584e-05)
    (35, 3.745573523519482e-06)
    (40, 4.153519524996954e-07)
    (50, 2.6170094498143915e-09)
};
\addplot[mark=square*, red!70!black] coordinates {
    (5, 0.13336792083522656)
    (10, 0.12489823544077416)
    (15, 0.005857259927235613)
    (20, 6.046053784754301e-05)
    (25, 1.7154489082690188e-06)
    (30, 4.360990372545683e-08)
    (35, 1.2515376050397297e-09)
    (40, 1.9342862406903293e-11)
    (50, 8.761101469586852e-16)
};
\end{axis}

\end{tikzpicture}
}\\
        \scalebox{0.8}{%
\begin{tikzpicture}

\begin{axis}[
    width=0.35\textwidth,
    height=0.3\textwidth,
    xlabel={CG iterations},
    ylabel={Relative error},
    xmin=5, xmax=50,
    xtick={5,10,15,20,25,30,35,40,50},
    ymode=log,
    ytickten={0,-2,-4,-6,-8,-10,-12,-14},
    grid=both,
    legend pos=north east,
    thick,
    title={$(\alpha, a) = (1,1.3)$}
]
\addplot[mark=o, blue] coordinates {
    (5, 0.1863237056771352)
    (10, 0.05402498057324269)
    (15, 0.025819768885334955)
    (20, 0.007396670083771834)
    (25, 0.0036331373561403614)
    (30, 0.003097873548688374)
    (35, 0.0013303865047986766)
    (40, 0.0007100649639852083)
    (50, 0.000172465082642948)
};
\addplot[mark=square*, red!70!black] coordinates {
    (5, 0.0003424978231342578)
    (10, 5.569385315404921e-09)
    (15, 8.448807074076804e-14)
    (20, 0.0)
    (25, 0.0)
    (30, 0.0)
    (35, 0.0)
    (40, 0.0)
    (50, 0.0)
};
\end{axis}

\end{tikzpicture}
}
    \scalebox{0.8}{%
\begin{tikzpicture}

\begin{axis}[
    width=0.35\textwidth,
    height=0.3\textwidth,
    xlabel={CG iterations},
    xmin=5, xmax=50,
    xtick={5,10,15,20,25,30,35,40,50},
    ymode=log,
    ytickten={0,-2,-4,-6,-8,-10,-12,-14},
    grid=both,
    legend pos=north east,
    thick,
    title={$(\alpha, a) = (0.1,1.3)$}
]
\addplot[mark=o, blue] coordinates {
    (5, 0.2983517096230032)
    (10, 0.1682979081707932)
    (15, 0.1251265478918416)
    (20, 0.09101172027460903)
    (25, 0.06419015842799952)
    (30, 0.04354716944113693)
    (35, 0.028678824238280026)
    (40, 0.02125545788408028)
    (50, 0.010527063137735462)
};
\addplot[mark=square*, red!70!black] coordinates {
    (5, 0.1867628466028816)
    (10, 0.0029184627188420047)
    (15, 3.510130067638876e-05)
    (20, 2.641190594749425e-07)
    (25, 3.2194583047995846e-09)
    (30, 3.3418561757682515e-11)
    (35, 4.22837620019158e-13)
    (40, 2.364914179553959e-15)
    (50, 0.0)
};
\end{axis}

\end{tikzpicture}
}
        \scalebox{0.8}{%
\begin{tikzpicture}

\begin{axis}[
    width=0.35\textwidth,
    height=0.3\textwidth,
    xlabel={CG iterations},
    xmin=5, xmax=50,
    xtick={5,10,15,20,25,30,35,40,50},
    ymode=log,
    ytickten={0,-2,-4,-6,-8,-10,-12,-14},
    grid=both,
    legend pos=north east,
    thick,
    title={$(\alpha, a) = (0.01,1.3)$}
]
\addplot[mark=o, blue] coordinates {
    (5, 0.33089969310067624)
    (10, 0.20740647111838847)
    (15, 0.16329168038271794)
    (20, 0.12629688256174734)
    (25, 0.06946081136703502)
    (30, 0.04930949759744233)
    (35, 0.03862125331435998)
    (40, 0.029286326056520557)
    (50, 0.016787324754460722)
};
\addplot[mark=square*, red!70!black] coordinates {
    (5, 0.24990581984559593)
    (10, 0.20920063695693922)
    (15, 0.0028999024051245222)
    (20, 4.831438274975951e-05)
    (25, 1.3211576936712032e-06)
    (30, 3.226726462338523e-08)
    (35, 8.934366160900636e-10)
    (40, 1.318774723046221e-11)
    (50, 5.941522260864888e-16)
};
\end{axis}

\end{tikzpicture}
}   
    \caption{Time-discrete problem: Relative error over CG iterations for the condensed system \eqref{eq:cg}. Top: Stable problem with $a=0.8$. Bottom: Unstable problem with $a=1.3$.}
    \label{fig:grad:disc}
\end{figure}

\section{Solution of the feedback-transformed problem}\label{sec:feedbacked_problem}
In the previous section, we have introduced two numerical algorithms and illustrated how they might benefit from feedback stabilizations. However, the algorithms suggested in Subsections~\ref{subsec:gradintro} and \ref{subsec:ppcgintro} particularly leveraged the fact that there are no mixed terms involving both $x$ and $u$ in the cost functional. For the gradient method of the reduced problems, this implies that the governing operator $\calM_\mathrm{red}$ of \eqref{eq:cg} consists of only two summands of which only one involves state and adjoint solves. For the PPCG method, as a consequence of the absence of coupling terms in the cost functional, the top left $2\times 2$ block of $\calM_{\mathrm{opt}}$ was block-diagonal.

In this part, we thus illustrate how to apply these methods to the feedback-transformed problem. In particular, we provide bounds on the condition number of the transformed problem. Before analyzing the algorithms, however, we briefly analyze the feedbacked optimal control problem. To this end we note that in view of optimality conditions (see \eqref{eq:optsys}), it is usually very helpful if the constraint operator has closed range. Moreover, we provide a relation of the optimal solutions of the original problem~\eqref{eq:OCP} and its feedback-transformed counterpart~\eqref{eq:OCP2}.
\begin{proposition}\label{prop:theoretischallesgut}
Let $\calK\in L(\calX,\calU)$. Then the following hold.
\begin{enumerate}[label=(\roman*)]
    \item  If $\calA$ is surjective, then the feedbacked constraint operator %
    \begin{equation*}
    [\widetilde\calA,-\calB] \coloneqq [\calA - \calB \calK, -\calB] : \calX\times \calU \to \calP^*
    \end{equation*}
    is surjective.
    \item  The state-control pair $(x,u)$ is optimal for \eqref{eq:OCP} if and only if the transformed state-control pair $(x,u-\calK x)$ is optimal for \eqref{eq:OCP2}. Equivalently, $(x,v)$ is optimal for \eqref{eq:OCP2} if and only if $(x,\calK x + v)$ is optimal for \eqref{eq:OCP}.
\end{enumerate}
\end{proposition}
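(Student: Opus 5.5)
For part (i), we show that $[\widetilde\calA,-\calB]$ and $[\calA,-\calB]$ have the same range. Define the linear map $\Phi:\calX\times\calU\to\calX\times\calU$ by $\Phi(x,v) := (x,\calK x + v)$. This map is bounded because $\calK\in L(\calX,\calU)$. It is bijective with bounded inverse $\Phi^{-1}(x,u) = (x,u-\calK x)$. A direct computation gives
\begin{equation*}
[\calA,-\calB]\,\Phi(x,v) = \calA x - \calB(\calK x + v) = (\calA-\calB\calK)x - \calB v = [\widetilde\calA,-\calB](x,v),
\end{equation*}
so $[\widetilde\calA,-\calB] = [\calA,-\calB]\circ\Phi$. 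Since $\Phi$ is surjective, both operators have the same range. Surjectivity of $\calA$ already gives surjectivity of $[\calA,-\calB]$: take $u=0$ and $x=\calA^{-1}g$, or more generally any preimage under $\calA$. Hence $[\widetilde\calA,-\calB]$ is surjective as well.

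For part (ii), we use the same isomorphism $\Phi$. Write $J(x,u)$ for the cost in \eqref{eq:OCP} and $\widetilde J(x,v)$ for the cost in \eqref{eq:OCP2}. The key steps are as follows.

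\begin{enumerate}[label=(\alph*)]
\item By construction, $\widetilde J = J\circ\Phi$, since the tracking term depends only on $x$ and the regularization term becomes $\frac\alpha2\|\calK x+v\|_\calU^2$.
\item By the identity in part (i), $(x,v)$ satisfies the constraint of \eqref{eq:OCP2} if and only if $\Phi(x,v)$ satisfies the constraint of \eqref{eq:OCP}. Thus $\Phi$ maps the feasible set $F_2$ of \eqref{eq:OCP2} bijectively onto the feasible set $F_1$ of \eqref{eq:OCP}.
\item Optimality transfers along $\Phi$. Suppose $(x,u)\in F_1$ is optimal for \eqref{eq:OCP}. For any $(x',v')\in F_2$ we have $\widetilde J(x',v') = J(\Phi(x',v')) \ge J(x,u) = \widetilde J(\Phi^{-1}(x,u))$, and $\Phi^{-1}(x,u) = (x,u-\calK x)$. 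So $(x,u-\calK x)$ is optimal for \eqref{eq:OCP2}. The converse is proved symmetrically using $\Phi^{-1}$.
\end{enumerate}

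The second formulation in (ii) is the same statement with $v=u-\calK x$.

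No step poses a real obstacle. The only point needing care is the exact bijection between the feasible sets, i.e.\ the identity $[\calA,-\calB]\Phi = [\widetilde\calA,-\calB]$ used in both directions. This holds precisely because $\Phi$ has the explicit bounded inverse above. Existence of optimal solutions is not needed, since (ii) is a pure equivalence statement.
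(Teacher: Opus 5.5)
Your proof is correct and is essentially the paper's argument: the paper's explicit preimage $(x,-\calK x)$ in (i) is exactly $\Phi^{-1}(x,0)$. Its proof of (ii) is the same coordinate-change argument with $\calW=\left[\begin{smallmatrix} I & 0\\ \calK & I\end{smallmatrix}\right]$, which is your $\Phi$. You additionally make explicit what the paper leaves implicit, namely $\widetilde J = J\circ\Phi$ and the bijection of feasible sets; your direction of the map is also the consistent one, whereas the paper writes $(x,v)=\calW(x,u)$ with the roles reversed.
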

\begin{proof}
  (i): Let $f\in \calP^*$. As $\calA$ is surjective, there is $x\in \calX $ such that $\calA x = f$. Setting $v= -\calK x$, $[\widetilde \calA, \calB] \left[\begin{smallmatrix}
        x\\v
    \end{smallmatrix}\right] = (\calA - \calB \calK)x + \calB\calK x = \calA x = f$ and the claim follows.

(ii):  This follows from the fact that the variable substitution is just a coordinate transformation given by the invertible linear map $\left[\begin{smallmatrix}
        x\\v
    \end{smallmatrix}\right] = \calW \left[\begin{smallmatrix}
        x\\u
    \end{smallmatrix}\right]$ with $\calW=\left[\begin{smallmatrix}
        I & 0 \\
        \calK & I
    \end{smallmatrix}\right]$.
\end{proof}

We now state the main assumption on the choice of the feedback.
\begin{assumption}\label{ass:feedback}
The feedback $\calK \in L(\calX,\calU)$ is chosen such that:
\begin{enumerate}[label=(\roman*)]
    \item The feedbacked state operator $\tcalA = \calA - \calB \calK: \calX \to \calP^*$ is %
    invertible, and the operator norm $\sigma_\calK=\|\calC\widetilde{\calA}^{-1}\calB\|_{\calU \to \calY}$ is finite\label{ass:iso}
    \item There is $\delta_\calK > 0$ such that $\|\calK x\|_\calU \leq \delta_\calK \|\calC x\|_\calY$ for all $x\in \calX$. \label{ass:smallness}
\end{enumerate}
\end{assumption}
We briefly comment on this assumption. First, we note that Assumption~\ref{ass:feedback} can always be satisfied by choosing $\calK=0$. In this case, however, the transformed problem is identical to the original problem. When choosing a feedback operator $0\neq \calK\in L(\calX,\calU)$, Assumption~\ref{ass:feedback}\ref{ass:iso} is to ensure that the methods are applicable as both build upon solving the constraint for the state. Further, Assumption~\ref{ass:feedback}\ref{ass:smallness} ensures smallness of the feedback w.r.t.\ the cost. If the observation operator $\calC: \calX \to \calY$ has closed range, this may be characterized geometrically in terms of a kernel inclusion. Precisely, it is easily checked that if $\ran \calC$ is closed, then Assumption~\ref{ass:feedback}\ref{ass:smallness} is equivalent to $\ker \calC \subset \ker \calK.$

In the following Subsections~\ref{subsec:gradfeedbacked} and \ref{subsec:ppcgfeedbacked} we show how to use the discussed numerical methods for the feedback-transformed problems and provide bounds on the condition number.

\subsection{Gradient method for the reduced problem}\label{subsec:gradfeedbacked}
In this subsection, we analyze the gradient method for the condensed problem when applied to the feedback-transformed optimal control problem~\eqref{eq:OCP2}.

To this end, we denote $\widetilde{\calA} = \mathcal{A}-\mathcal{B}\mathcal{K}$ such that eliminating the state by the control in \eqref{eq:OCP2} via $x = \widetilde{\mathcal{A}}^{-1} (\calB v + f)$ we obtain the reduced transformed problem
\begin{equation*}
    \min_{v\in \calU} \tilde h(v) := \frac12\|\calC({\tcalA}^{-1}\calB v + {\tcalA}^{-1}f) - y_\mathrm{ref}\|_\calY^2 + \frac\alpha2 \|\mathcal{K}({\tcalA}^{-1}\calB v + {\tcalA}^{-1}f)+v\|_\calU^2.
\end{equation*}
Expanding the squares, this cost functional is (up to constant terms) given by
\begin{align*}
    \tilde h(v) &= \frac12 \|\calC{\tcalA}^{-1}\calB v\|_\calY^2 + \langle \calC{\tcalA}^{-1}\calB v,\calC{\tcalA}^{-1}f-y_\mathrm{ref}  \rangle_{\calY} \\
    & \qquad  + \frac{\alpha}{2}\| \mathcal{K}{\tcalA}^{-1}\calB v + v\|_\calU^2  + \alpha\langle \mathcal{K}{\tcalA}^{-1}\calB v + v,\mathcal{K}\tcalA^{-1}f\rangle
\end{align*}
leading to the optimality condition 
\begin{align}\label{eq:cg2}
\begin{split}
       0 = \nabla \tilde h(v^*) &= \left((\calC\tcalA^{-1}\calB)^*\Ry\calC\tcalA^{-1}\calB+ \alpha (\mathcal{K}\tcalA^{-1}\calB + I)^*\Ru (\mathcal{K}\tcalA^{-1}\calB + I)\right)v^* \\
    & \qquad  + (\calC\calA^{-1}\calB)^*\Ry(\calC\calA^{-1}f - y_\mathrm{ref}) + \alpha(\mathcal{K}\tcalA^{-1}\calB + I)^*\Ru \mathcal{K}\tcalA^{-1}f.
\end{split}
\end{align}
Hence, the transformed governing operator is $ \widetilde{\mathcal{M}}_\mathrm{red} : \calU \to \calU^*$ defined by
\begin{equation}
    \widetilde{\mathcal{M}}_\mathrm{red} := (\calC\tcalA^{-1}\calB)^*\Ry\calC\tcalA^{-1}\calB+ \alpha (\mathcal{K}\tcalA^{-1}\calB + I)^*\Ru (\mathcal{K}\tcalA^{-1}\calB + I).
\end{equation}

Before discussing possible improvements of the condition number, we briefly discuss the complexity of applying $\widetilde{\calM}_\mathrm{red}$ in comparison to the original operator $\calM_\mathrm{red}$.
 We may organize the application of $\widetilde{\mathcal{M}}_\mathrm{red}$ to $v\in \calU$ as follows:
First we compute the state and the original control via $y = \tcalA^{-1}\calB v$ and $u= \Ru(\calK y+v)$. 
Then we may express the evaluation as $\widetilde{\mathcal{M}}_\mathrm{red}v = \calB^*\tcalA^{-*}(\calC^*\Ry \calC y+\alpha \mathcal{K}^* u) +  \alpha u$.
In this way, just as in the original problem governed by $\calM_{\mathrm{red}}$, one state and one adjoint solve are sufficient to compute a gradient step. The only additional cost, compared to the case $\calK=0$, is the application of the feedback operator and its adjoint. 

We now prove a condition number estimate for the feedbacked reduced optimality system.
\begin{proposition}\label{prop:grad_cond_feedbacked}
Let Assumption~\ref{ass:feedback} hold. Then for all $v\in \calU$      
        \begin{equation*}
            \frac{\alpha}{1+\alpha\delta_\calK^{2}} \|v\|^2_\calU \leq \langle v,\widetilde{\calM}_\mathrm{red} v\rangle_{\calU,\calU^*} 
            \leq \left(
\sigma_\calK^2
+
\alpha\left(1+\delta_\calK \sigma_\calK\right)^2
\right) \|v\|^2_\calU.
        \end{equation*}
        For the condition number we obtain:
\begin{equation}\label{eq:condnumber}
        \kappa(\widetilde{\calM}_\mathrm{red}) \le (1+\alpha \delta_\calK^2)\left(
\frac{\sigma_\calK^2}{\alpha}
+
\bigl(1+\delta_\calK \sigma_\calK\bigr)^2
\right).
\end{equation}
\end{proposition}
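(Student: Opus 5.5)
Write $x := \tcalA^{-1}\calB v$ and $u := \calK x + v$, so that $(x,u)$ is the state--control pair of the original problem generated by $v$. Then
\[
\langle v,\widetilde{\calM}_\mathrm{red} v\rangle_{\calU,\calU^*} = \|\calC x\|_\calY^2 + \alpha\|u\|_\calU^2 .
\]
Both bounds will be derived from this identity. The only tools are the definition $\sigma_\calK = \|\calC\tcalA^{-1}\calB\|_{\calU\to\calY}$ from Assumption~\ref{ass:feedback}\ref{ass:iso} and the smallness bound $\|\calK x\|_\calU \le \delta_\calK\|\calC x\|_\calY$ from Assumption~\ref{ass:feedback}\ref{ass:smallness}.

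\emph{Upper bound.} First, $\|\calC x\|_\calY \le \sigma_\calK\|v\|_\calU$. Second, the triangle inequality together with the smallness bound gives
\[
\|u\|_\calU \le \|\calK x\|_\calU + \|v\|_\calU \le \delta_\calK\|\calC x\|_\calY + \|v\|_\calU \le (1+\delta_\calK\sigma_\calK)\|v\|_\calU .
\]
Squaring both estimates and summing yields the constant $\sigma_\calK^2 + \alpha(1+\delta_\calK\sigma_\calK)^2$.

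\emph{Lower bound.} This is the step that needs care, since it must hold uniformly in $\sigma_\calK$. Start from $v = u - \calK x$, so that
\[
\|v\|_\calU \le \|u\|_\calU + \delta_\calK\|\calC x\|_\calY = 1\cdot\|u\|_\calU + \delta_\calK\alpha^{-1/2}\cdot\alpha^{1/2}\|\calC x\|_\calY .
\]
Apply Cauchy--Schwarz in $\R^2$ to the vectors $(\sqrt{\alpha},\,\delta_\calK)$ and $(\|u\|_\calU,\,\|\calC x\|_\calY)$ after scaling the first entry:
\[
\|v\|_\calU^2 \le \Bigl(\tfrac{1}{\alpha} + \delta_\calK^2\Bigr)\bigl(\alpha\|u\|_\calU^2 + \|\calC x\|_\calY^2\bigr) = \frac{1+\alpha\delta_\calK^2}{\alpha}\,\langle v,\widetilde{\calM}_\mathrm{red} v\rangle_{\calU,\calU^*} .
\]
This is exactly the claimed lower constant $\alpha/(1+\alpha\delta_\calK^2)$. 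The main obstacle is choosing this weighting, namely pairing $\|u\|$ with weight $\sqrt\alpha$ and $\|\calC x\|$ with weight $1$, so that the result is independent of $\sigma_\calK$.

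\emph{Condition number.} With $\Ru$ as the reference operator $Q$, so that $\langle v,Qv\rangle = \|v\|_\calU^2$, the condition number is bounded by the ratio $M/m$ of the two constants. Simplifying,
\[
\frac{M}{m} = \frac{1+\alpha\delta_\calK^2}{\alpha}\Bigl(\sigma_\calK^2 + \alpha(1+\delta_\calK\sigma_\calK)^2\Bigr),
\]
which is \eqref{eq:condnumber}. As a consistency check, setting $\calK=0$ (so $\delta_\calK=0$) recovers Proposition~\ref{prop:gradient:condition}.
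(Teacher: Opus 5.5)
Your proof is correct. The upper bound is the paper's argument verbatim: the triangle inequality plus $\|\calK x\|_\calU\le\delta_\calK\|\calC x\|_\calY\le\delta_\calK\sigma_\calK\|v\|_\calU$.

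For the lower bound you take a different route from the paper:
\begin{itemize}
\item The paper expands $\|\calK x+v\|_\calU^2$ and replaces $\|\calC x\|_\calY^2$ by $\delta_\calK^{-2}\|\calK x\|_\calU^2$. It then absorbs the cross term $2\langle v,\calK x\rangle_\calU$ with Young's inequality, using the tuned parameter $c=\alpha/(\delta_\calK^{-2}+\alpha)$ so that the $\|\calK x\|_\calU^2$ terms cancel exactly.
\item You invert the substitution, $v=u-\calK x$. You bound $\|v\|_\calU$ linearly by $\|u\|_\calU$ and $\|\calC x\|_\calY$, and square with a weighted Cauchy--Schwarz inequality in $\R^2$.
\end{itemize}
Both are the same scalar estimate in different guises and give the identical constant $\alpha/(1+\alpha\delta_\calK^2)$; the paper's tuned $c$ plays the role of your optimal weights. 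Your version has two small advantages. It needs no parameter to be guessed. It never divides by $\delta_\calK$, so it also covers $\calK=0$ with $\delta_\calK=0$ directly. That recovers Proposition~\ref{prop:gradient:condition} without the limiting argument $\delta_\calK\to0$, which the paper implicitly needs because Assumption~\ref{ass:feedback} requires $\delta_\calK>0$.

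One presentational slip: the weights in your first display of the lower bound are in the wrong place. Splitting as $1\cdot\|u\|_\calU+\delta_\calK\alpha^{-1/2}\cdot\alpha^{1/2}\|\calC x\|_\calY$ would give $(1+\delta_\calK^2/\alpha)(\|u\|_\calU^2+\alpha\|\calC x\|_\calY^2)$. That is not a multiple of the cost $\|\calC x\|_\calY^2+\alpha\|u\|_\calU^2$. The split you actually need, and which your final inequality uses, is $\|v\|_\calU\le\alpha^{-1/2}\cdot\alpha^{1/2}\|u\|_\calU+\delta_\calK\cdot\|\calC x\|_\calY$. In other words, apply Cauchy--Schwarz to $(\alpha^{-1/2},\delta_\calK)$ and $(\alpha^{1/2}\|u\|_\calU,\|\calC x\|_\calY)$. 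With that correction, the final inequality and the condition number bound follow as you wrote them.
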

\begin{proof}
The upper bound follows directly from 
\begin{align*}
\langle v,\widetilde{\calM}_\mathrm{red}v\rangle_\calU
&=
\|\calC\tcalA^{-1}\calB v\|_\calY^2
+
\alpha\|\calK\tcalA^{-1}\calB v+v\|_\calU^2 \\
&\le
\sigma_\calK^2\|v\|_\calU^2
+
\alpha\bigl(\|\calK\tcalA^{-1}\calB v\|_\calU+\|v\|_\calU\bigr)^2 \\
&\le
\sigma_\calK^2\|v\|_\calU^2
+
\alpha\bigl(1+\delta_\calK \sigma_\calK\bigr)^2\|v\|_\calU^2 =
\Bigl(
\sigma_\calK^2
+
\alpha\bigl(1+\delta_\calK \sigma_\calK\bigr)^2
\Bigr)
\|v\|_\calU^2 .
\end{align*}
For the lower bound, we expand the square such that
\begin{align}\label{eq:expandedsquare}
        \langle v,\widetilde{\calM}_\mathrm{red}v \rangle_\calU = \|\calC \tcalA^{-1}\calB v\|_{\calY}^2 + \alpha \left(\|\calK\tcalA^{-1}\calB v \|_\calU^2 + 2\langle v, \calK\tcalA^{-1}\calB v\rangle_\calU + \|v\|_\calU^2\right).
    \end{align}
By Assumption~\ref{ass:feedback}\ref{ass:smallness}, we have $\delta_\calK^{-2}\|\calK x\|_\calU^2 \leq \|\calC x\|^2_\calY$ for all $x\in \calX$. Further note that the Cauchy-Schwarz inequality and the binomial formula yield 
\begin{align*}
    2 \langle v,\calK\tcalA^{-1}\calB v \rangle_\calU \leq 2\|v\|_\calU \|\calK \tcalA^{-1}\calB v\|_\calU \leq c\|v\|_\calU^2 + \tfrac{1}{c} \|\calK \tcalA^{-1}\calB v\|_\calU^2
\end{align*}
for any $c>0$. Inserting these inequalities with $c=\frac{\alpha}{\delta_\calK^{-2}+\alpha}$ into \eqref{eq:expandedsquare}, we get
\begin{align*}
   &  \langle v,\widetilde{\calM}_\mathrm{red}v \rangle_\calU \\
   &\geq (\delta_\calK^{-2}+\alpha)\|\calK\tcalA^{-1}\calB v \|_\calU^2 + \alpha \left(- \frac{\alpha}{\delta_\calK^{-2}+\alpha}\|v\|^2_\calU - \frac{\delta_\calK^{-2} + \alpha}{\alpha} \|\calK\tcalA^{-1}\calB v\|^2_\calU   + \|v\|_\calU^2\right)\\ 
    &= \alpha \left( 1-\frac{\alpha}{\delta_\calK^{-2}+\alpha}\right) \|v\|^2_\calU =\frac{\alpha}{1+\alpha\delta_\calK^{2}} \|v\|^2_\calU.
\end{align*}
\end{proof}

We briefly mention that while one may leverage the feedback to reduce a possible instability manifesting in the solution operator norm, the feedback bound $\delta_\calK$ also appears explicitly in the estimates (e.g.~\eqref{eq:condnumber}) such that too large feedbacks might worsen the condition number. Hence, it is in practice crucial to balance the feedback size and the solution operator norm. If the solution operator norm $\sigma_\calK$ may be linked with the relative bound on the feedback $\delta_\calK$ of Assumption~\ref{ass:feedback}\ref{ass:smallness}, then one may optimize the upper bound \eqref{eq:condnumber} w.r.t.\ the feedback, see Figure~\ref{fig:spoileropt}. 
\begin{figure}[htb]
    \centering
    \scalebox{.8}{%
\begin{tikzpicture}
\begin{axis}[
    width=0.46\textwidth,
    height=0.25\textwidth,
    xmode=log,
    ymode=log,
    xmin=0.0008333333333333334, xmax=1200.0,
    xlabel={$\delta$},
    ymin = 10,
    ymax = 10^5,
    ylabel={Error after 5 iter.},
    axis y line*=left,
    axis x line*=bottom,
    grid=both,
    thick,
]
\addplot[mark=o,] coordinates {
    (0.001, 11005.505832069923)
    (0.01, 2005.0184081362897)
    (0.1, 223.51133072721447)
    (0.5, 44.84996304432988)
    (10, 25.435057307003365)
    (70.71067811865476, 24.69170173360892)
    (100, 63.63691170080697)
    (1000.0, 1459.8254317089202)
};
\addplot[red, dashed] coordinates {
    (70.71067811865476, 10)
    (70.71067811865476, 10^5)
};
\end{axis}
\end{tikzpicture}
}\hspace{.8cm}
    \scalebox{.8}{%
\begin{tikzpicture}
\begin{axis}[
    width=0.46\textwidth,
    height=0.25\textwidth,
    xmode=log,
    ymode=log,
    xmin=0.0008333333333333334, xmax=1200.0,
    xlabel={$\delta$},
    ylabel={Error after 20 iter.},
    ymin = 1e-9,
    ymax = 10,
    axis y line*=left,
    axis x line*=bottom,
    grid=both,
    thick,
]
\addplot[mark=o] coordinates {
    (0.001, 0.017657462427447732)
    (0.01, 0.017317452137050986)
    (0.1, 0.009185875316701463)
    (1, 4.4923390355135003e-07)
    (1.43342685048752, 4.812461194998673e-09)
    (10, 3.801525271934263e-08)
    (100, 0.09475623936389407)
    (1000, 0.6742085932727813)
};
\addplot[red, dashed] coordinates {
    (1.43342685048752, 1e-09)
    (1.43342685048752, 10)
};
\end{axis}

\end{tikzpicture}
}
    \caption{Error after fixed number of conjugate gradient iterations for varying feedback strengths parameterized by $\delta_\calK$. The vertical red line depicts the theoretically optimized feedback strength $\delta_\calK^*$ minimizing the bound on the condition number. Left: Elliptic problem, right: parabolic problem.}
    \label{fig:spoileropt}
\end{figure}
We will discuss these examples and how to obtain the optimal feedback strength $\delta_\calK^*$ that minimizes condition number bound \eqref{eq:condnumber} in detail in Section~\ref{sec:appl}. However, we already see here that the theoretical optimization of the condition number leads to the best performance of the numerical method, that is, the smallest error after a fixed number of iterations.

Next, we analyze the PPCG method for the feedbacked problem.
\subsection{Preconditioned projected conjugate gradients} \label{subsec:ppcgfeedbacked}
To apply the PPCG method as introduced in Subsection~\ref{subsec:ppcgintro} to the transformed problem \eqref{eq:OCP2}, we first deduce the optimality system for the transformed problem. To this end, observe by Proposition~\ref{prop:theoretischallesgut} that the transformed constraint operator has closed range such that we may again apply the abstract optimality conditions from \cite[Theorem 1.1]{schiela2013concise}.

Let $(x,v) \in \calX \times \calU$ be optimal for \eqref{eq:OCP2}. Then, there is $p\in \calP^*$ such that 
\begin{equation}\label{eq:optsys2}
\begin{bmatrix}
    \calC^*\Ry\calC + \alpha \calK^*\Ru \calK & \alpha \calK^*\Ru & (\calA - \calB \calK)^*\\
    \alpha \Ru \calK &  \alpha \Ru & -\calB^*\\
    (\calA - \calB \calK)& -\calB &0
\end{bmatrix}\begin{bmatrix}
    x\\v\\p
\end{bmatrix}
= 
\begin{bmatrix}
    \calC^*y_\mathrm{ref}\\
    0\\
    f
\end{bmatrix}.
\end{equation}
We briefly comment on the relation between the optimality systems of \eqref{eq:OCP} and \eqref{eq:OCP2}.
\begin{remark}
The \emph{feedbacked optimality system} \eqref{eq:optsys2} results from a congruence transformation of \eqref{eq:optsys}. More precisely,
$
    \calW^* \calM_\mathrm{opt} \calW = \widetilde\calM_{\mathrm{opt}}$ with $\calW := \begin{smallbmatrix}
      I & & \\
      \calK & I &\\
    ~  & & I
    \end{smallbmatrix}$.

\end{remark}
In the transformed case, a suitable constraint preconditioner for \eqref{eq:optsys2} is given by
\[
  \widetilde \calQ = \begin{bmatrix}
    0 & 0 & (\calA-\calB \calK)^*\\
    0 & \alpha\Ru & -\calB^*\\
    \calA-\calB\calK & -\calB & 0
\end{bmatrix}.
\]
\begin{proposition}\label{prop:grad_ppcg_feedbacked}
Let Assumption~\ref{ass:feedback} hold. Then for all $z = (x,v,p)\in \calZ:=\calX \times \calU \times \calP$ with $(x,v)\in \ker [\tcalA,-\calB]$ 
\begin{align*}
     \frac{1}{1+\alpha\delta_\calK^2} \langle z,  \widetilde\calQ z\rangle_{\calZ,\calZ^*} \leq \langle z,\widetilde{\calM}_\mathrm{opt}z\rangle_{\calZ,\calZ^*} \leq \left(
\frac{\sigma_\calK^2}{\alpha}
+
\left(1+\delta_\calK \sigma_\calK\right)^2
\right)\langle z, \widetilde \calQ z\rangle_{\calZ,\calZ^*}.
\end{align*}
and hence the condition number estimate \eqref{eq:condnumber} for $\kappa(\widetilde{\mathcal{M}}_\mathrm{opt})$.
\end{proposition}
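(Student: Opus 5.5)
The plan is to reduce the statement to Proposition~\ref{prop:grad_cond_feedbacked}, exactly as Proposition~\ref{prop:ppcg:condition} was obtained from Proposition~\ref{prop:gradient:condition} via \eqref{eq:itsthesame}.

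First, I parametrize the kernel of the feedbacked constraint. By Assumption~\ref{ass:feedback}\ref{ass:iso}, $\tcalA$ is invertible, so
\[
\ker[\tcalA,-\calB]=\{(\tcalA^{-1}\calB v,v)\,|\,v\in\calU\}.
\]
Hence any admissible $z=(x,v,p)$ has $x=\tcalA^{-1}\calB v$.

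Second, I evaluate both quadratic forms on such $z$. In each of $\widetilde\calM_\mathrm{opt}$ and $\widetilde\calQ$, the terms involving $p$ are $2\langle p,\tcalA x-\calB v\rangle$, and these vanish on the kernel. Only the top-left $2\times2$ blocks remain.

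For $\widetilde\calM_\mathrm{opt}$, the top-left block is
\[
\begin{bmatrix}\calC^*\Ry\calC+\alpha\calK^*\Ru\calK & \alpha\calK^*\Ru\\ \alpha\Ru\calK & \alpha\Ru\end{bmatrix}.
\]
Its quadratic form is $\|\calC x\|_\calY^2+\alpha\|\calK x+v\|_\calU^2$. Substituting $x=\tcalA^{-1}\calB v$ gives
\[
\langle z,\widetilde\calM_\mathrm{opt}z\rangle_{\calZ,\calZ^*}=\|\calC\tcalA^{-1}\calB v\|_\calY^2+\alpha\|\calK\tcalA^{-1}\calB v+v\|_\calU^2=\langle v,\widetilde\calM_\mathrm{red}v\rangle_{\calU,\calU^*}.
\]
This is the feedbacked analogue of \eqref{eq:itsthesame}. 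For $\widetilde\calQ$, the only surviving entry is $\alpha\Ru$, so $\langle z,\widetilde\calQ z\rangle_{\calZ,\calZ^*}=\alpha\|v\|_\calU^2$.

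Third, I insert these identities into the two-sided bound of Proposition~\ref{prop:grad_cond_feedbacked}. Writing $\|v\|_\calU^2=\alpha^{-1}\langle z,\widetilde\calQ z\rangle$, the lower bound $\frac{\alpha}{1+\alpha\delta_\calK^2}\|v\|_\calU^2$ becomes $\frac{1}{1+\alpha\delta_\calK^2}\langle z,\widetilde\calQ z\rangle$. The upper bound $\bigl(\sigma_\calK^2+\alpha(1+\delta_\calK\sigma_\calK)^2\bigr)\|v\|_\calU^2$ becomes $\bigl(\frac{\sigma_\calK^2}{\alpha}+(1+\delta_\calK\sigma_\calK)^2\bigr)\langle z,\widetilde\calQ z\rangle$. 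These are precisely the claimed constants.

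Finally, the ratio of the upper to the lower constant is the right-hand side of \eqref{eq:condnumber}, which gives the stated condition number estimate for $\kappa(\widetilde\calM_\mathrm{opt})$.

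The only step requiring care is the second one: I need to check that the off-diagonal $\alpha\calK^*\Ru$ blocks and the $\calK^*\Ru\calK$ term combine exactly into $\alpha\|\calK x+v\|_\calU^2$. The congruence $\calW^*\calM_\mathrm{opt}\calW=\widetilde\calM_\mathrm{opt}$ from the Remark guarantees this, and the computation is otherwise routine.
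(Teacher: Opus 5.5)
Your proposal is correct and follows the paper's proof: the paper likewise reduces the claim to Proposition~\ref{prop:grad_cond_feedbacked} via the feedbacked analogue of \eqref{eq:itsthesame}, namely $\langle z,\widetilde{\calM}_\mathrm{opt}z\rangle_{\calZ,\calZ^*}=\langle v,\widetilde{\calM}_\mathrm{red}v\rangle_{\calU,\calU^*}$ for $(x,v)\in\ker[\tcalA,-\calB]$. Your explicit check that $\langle z,\widetilde\calQ z\rangle_{\calZ,\calZ^*}=\alpha\|v\|_\calU^2$ on this kernel, which turns the bounds into the stated constants, is a step the paper leaves implicit.
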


\begin{proof}
The proof follows from Proposition~\ref{prop:grad_cond_feedbacked} and the counterpart of \eqref{eq:itsthesame} for the transformed case, i.e., $z:= (x,v,p)\in \calZ %
$ with $(x,v)\in \ker [\tcalA,-\calB]$,
    \begin{align*}
        \langle z,\widetilde{\calM}_\mathrm{opt}z\rangle_{\calZ,\calZ^*} &= \|\calC x\|^2_\calY + \alpha\|\calK x + v\|^2_\calU \\&= \|\calC \tcalA^{-1}\calB v\|^2_\calY + \alpha\|\calK \tcalA^{-1}\calB v+ v\|^2_\calU = \langle v, \widetilde{\calM}_\mathrm{red}v\rangle_\calU.
    \end{align*}
\end{proof}
We briefly comment on the estimates of Propositions~\ref{prop:grad_cond_feedbacked} and \ref{prop:grad_ppcg_feedbacked}. Therein, we observe a dependence of the condition number on the feedbacked solution operator $\tcalA^{-1} = (\calA - \calB\calK)^{-1}$ instead of the original solution operator $\calA^{-1}$. Hence, if the feedback is chosen suitably to reduce the solution operator norm,  the condition number may be drastically improved. This improvement may come from stabilization of a time-dependent problem as done for the illustrating scalar example of Subsection~\ref{subsec:easyex} and as will be done for general ODEs, heat and wave equations in the subsequent Section~\ref{sec:appl}. For stationary problems, such as elliptic partial differential equations, we will show how to enlarge the coercivity constant of the associated bilinear form.

\section{Applications}\label{sec:appl}
In this part, we provide several important applications of the above developed abstract approach. In Subsection~\ref{subsec:fd} we discuss the case of ODE constraints to illustrate the main mechanism of the feedback stabilization in view of linear system theory. Then, in Subsection~\ref{subsec:elliptic} we move to elliptic PDEs, where feedbacks are used to shape the coercivity of the governing main operator. Last, we move again to time-dependent problems and apply our method to parabolic equations in Subsection~\ref{subsec:parabolic} and to hyperbolic equations in Subsection~\ref{subsec:hyperbolic}.
\subsection{Finite-dimensional control systems}\label{subsec:fd}
In this part, we consider linear finite-dimensional control systems
\begin{equation}\label{eq:fin_dim_cs}
    \dot x(t) = A x(t) + Bu(t)  \quad t\in [0,T], \qquad x(0) = x_0,
\end{equation}
where $T>0$ is a time horizon, $A \in \R^{n \times n}$ denotes the \textit{system matrix}, $B \in \R^{m \times n}$ is a \textit{control input} matrix, $u: [0, T) \to \R^m$ is a \textit{control function} and $x: [0, T) \to \R^n$ is the associated state. For brevity, we refer to the control system \eqref{eq:fin_dim_cs} as the pair $(A,B)$. For every $x_0 \in \mathbb{R}^n$ and $u \in L^2(0,T;\mathbb{R}^m)$, \eqref{eq:fin_dim_cs} admits a unique \textit{mild solution}
$x \in H^1(0,T;\mathbb{R}^n) \subset C([0,T];\mathbb{R}^n)$
given by the \emph{variation-of-constants} formula
\begin{equation}\label{eq:var_of_const}
x(t) = e^{tA}x_0 + \int_0^t e^{(t-s)A} B u(s)\,\mathrm{d}s,
\quad t \in [0,T].
\end{equation}
Moreover, $x$ satisfies \eqref{eq:fin_dim_cs} almost everywhere. Note that, due to \cite[Thm. 2.1.7]{curtain2020introduction},  for any matrix $A\in \R^{n\times n}$ there exist $\omega \in \R$ and $M \geq 1$ such that
    \begin{equation}\label{eq:Gronwall}
        \left\|e^{t A} \right\|_{\R^n\to\R^n} \leq M e^{\omega t} , \qquad t \geq 0.
    \end{equation}
We briefly comment on the constants involved in this estimate in view of spectral properties of the matrix $A$.
\begin{remark}
  The constants in \eqref{eq:Gronwall} may be obtained by the \textit{growth bound} of $e^{tA}$ defined by $$\omega_0(A) \coloneqq \sup \{\re \lambda \, | \, \lambda \text{ is an eigenvalue of } A \}.$$ Then, \eqref{eq:Gronwall} holds for any $\omega > \omega_0(A)$, where, in general, $M$ depends on the chosen $\omega$. 

    If $A$ is normal, e.g.\ (skew-)symmetric, then \eqref{eq:Gronwall} holds with $M=1$ and $\omega = \omega_0(A)$; this may be seen using a diagonalization argument. In particular, for the skew-symmetric case,  $\|e^{tA}\|_{\R^n\to\R^n}\equiv 1$.
\end{remark}

To rewrite the control system \eqref{eq:fin_dim_cs} as an abstract constraint as in \eqref{eq:OCP}, we introduce the operators  $\calA : H^1(0,T; \R^n) \to L^2(0,T; \R^n) \times \R^n$ and $\calB: L^2(0,T; \R^m) \to L^2(0,T; \R^n) \times \R^n$
where 
\begin{equation}\label{eq:defnA}
    \calA x = \begin{smallbmatrix}
        \tfrac{\mathrm{d}}{\mathrm{d}t}x-Ax \\ x(0)
    \end{smallbmatrix}, \qquad \calB u = \begin{smallbmatrix}
        Bu \vphantom{\tfrac{\mathrm{d}}{\mathrm{d}t}x-Ax} \\ 0 \vphantom{x(0)}
    \end{smallbmatrix} .%
\end{equation}
Consequently, \eqref{eq:fin_dim_cs} is abstractly written as 
\begin{equation}\label{eq:abstracode}
    \calA x - \calB u = \begin{smallbmatrix}
         0 \\ x_0
     \end{smallbmatrix}.
\end{equation}
To include state feedbacks into the abstract operator setting, we define the bounded multiplication operator $\calK : L^2(0,T; \R^n) \to L^2(0,T; \R^m)$ via
\begin{equation}\label{eq:multop}
    (\calK x)(t) = Kx(t)
\end{equation} 
for a given state feedback matrix $K\in  \R^{n \times m}$. Then, the closed-loop dynamics associated with the static feedback $u(t) = Kx(t) + v(t)$ for all $t\in [0,T]$ reads
 \begin{equation}\label{eq:fin_dim_closed_loop}
    \dot x(t) = (A + BK)x(t) + Bv(t)  , \qquad x(0) = x_0.
\end{equation}
with unique solution given by the variations-of-constants formula
\begin{equation*}%
x(t) = e^{t(A+BK)}x_0 + \int_0^t e^{(t-s)(A+BK)} B v(s)\,\mathrm{d}s.
\quad t \in [0,T].
\end{equation*}
Hence, if one can suitably shape the spectrum of $A$ with the feedback matrix $K$ such that $\omega_0(A+BK) < 0$, then \eqref{eq:Gronwall} holds also with any $\omega_0(A+BK) < \omega < 0$. This setting is termed \emph{exponential stabilizability} in the realm of mathematical system theory \cite[Chapter 5]{curtain2020introduction}, as the system \eqref{eq:fin_dim_closed_loop} is subject to an exponentially stable matrix.

Correspondingly, the feedbacked version of \eqref{eq:abstracode} may be obtained by replacing $u$ with $v$ and $\calA$ with
\begin{equation}\label{eq:feedbackedcalA}
    \tcalA x = (\calA - \calB \calK) x = \begin{smallbmatrix}
        \tfrac{\mathrm{d}}{\mathrm{d}t}x-(A+BK) x \\ x(0)
    \end{smallbmatrix}
\end{equation}

The following result yields a bound on the operator norm in dependence of the stability parameter $\omega$. The proof is presented in Appendix \ref{subsec:app1}.
\begin{proposition}\label{prop:ode}
    For any $A\in \R^{n\times n}$, the operator $\calA$ as defined in \eqref{eq:defnA} is bounded and boundedly invertible. In particular,
    \begin{align*}
        \|\calA^{-1}\|_{L^2(0,T;\R^n)\times \R^n \to H^1(0,T;\R^n)} \leq \begin{cases}
            \left( \tfrac{M^2}{\omega} \left(1 + 2 \|A\|^2\right)  (e^{2T\omega}-1)+2\right)^{1/2} & \omega \neq 0\\
            \left(2M^2T \left(1 + 2 \|A\|^2\right) +2\right)^{1/2} & \omega = 0
        \end{cases}
    \end{align*}
    where $(M,\omega)$ are taken from the matrix stability estimate \eqref{eq:Gronwall}.
\end{proposition}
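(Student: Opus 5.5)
The plan is to write down $\calA^{-1}$ explicitly via the variation-of-constants formula \eqref{eq:var_of_const}, and then bound the $H^1$-norm of the resulting state by the data. Boundedness of $\calA$ is immediate. For $x\in H^1(0,T;\R^n)$ we have
\[
\|\tfrac{\mathrm d}{\mathrm dt}x - Ax\|_{L^2}\le (1+\|A\|)\|x\|_{H^1},
\]
and the trace $x\mapsto x(0)$ is bounded from $H^1(0,T;\R^n)$ into $\R^n$ by the continuous embedding $H^1(0,T)\hookrightarrow C([0,T])$.

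For bijectivity, fix data $(f,x_0)\in L^2(0,T;\R^n)\times\R^n$. The equation $\calA x=(f,x_0)$ means $\dot x = Ax+f$ with $x(0)=x_0$. Its unique solution is
\[
x(t)=e^{tA}x_0+\int_0^t e^{(t-s)A}f(s)\,\mathrm ds .
\]
I will check that this $x$ lies in $H^1$, which follows from $\dot x = Ax+f\in L^2$. Uniqueness follows from linearity together with the fact that the homogeneous problem has only the trivial solution. Hence $\calA$ is bijective, and boundedness of the inverse is exactly the quantitative estimate, which I derive next.

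For the estimate I split the $H^1$-norm as $\|x\|_{H^1}^2=\|x\|_{L^2}^2+\|\dot x\|_{L^2}^2$. Using $\dot x=Ax+f$ and $(a+b)^2\le 2a^2+2b^2$, I get
\[
\|x\|_{H^1}^2\le (1+2\|A\|^2)\|x\|_{L^2}^2+2\|f\|_{L^2}^2 .
\]
This produces the factor $(1+2\|A\|^2)$ and the additive constant $2$ in the claimed bound. It therefore remains to show
\[
\|x\|_{L^2}^2\le \frac{M^2}{\omega}(e^{2T\omega}-1)\bigl(|x_0|^2+\|f\|_{L^2}^2\bigr)
\]
in the case $\omega\neq0$. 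Note that $\frac{e^{2T\omega}-1}{\omega}=2\int_0^T e^{2\omega t}\,\mathrm dt$ is positive for either sign of $\omega$. The case $\omega=0$ is the limit $\omega\to0$, where this quantity becomes $2T$.

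To bound $\|x\|_{L^2}$ I use \eqref{eq:Gronwall} pointwise. Writing $|x(t)|$ as a sum of the two terms of the variation-of-constants formula, I apply Cauchy--Schwarz to the integral. Pairing the exponential weight with $f$ in $\R^n\times L^2$ then gives
\[
|x(t)|^2\le M^2\Big(e^{2\omega t}+\int_0^t e^{2\omega(t-s)}\,\mathrm ds\Big)\bigl(|x_0|^2+\|f\|_{L^2}^2\bigr).
\]
Integrating over $[0,T]$ yields the desired form.

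The main obstacle is the constant bookkeeping in this last step. The convolution term gives a double integral, $\int_0^T\!\int_0^t e^{2\omega r}\,\mathrm dr\,\mathrm dt$, rather than $\int_0^T e^{2\omega t}\,\mathrm dt$. To match the stated constant $\frac{M^2}{\omega}(e^{2T\omega}-1)$, I would first try to estimate the convolution in $L^2$ by Young's inequality, $\|e^{\omega\cdot}*|f|\|_{L^2}\le\|e^{\omega\cdot}\|_{L^1}\|f\|_{L^2}$, combined with the elementary inequality on the exponential integrals. If the constant does not come out exactly, I would accept a bound of the same structure, differing only in harmless numerical factors, since only its qualitative dependence on $(M,\omega,T,\|A\|)$ matters for the subsequent use of the result.
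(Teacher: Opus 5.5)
Your outline (boundedness of $\calA$, bijectivity via the variation-of-constants formula, and the reduction $\|x\|_{H^1}^2\le(1+2\|A\|^2)\|x\|_{L^2}^2+2\|f\|_{L^2}^2$) matches the paper's. But the proposal stops short of the claimed inequality, and that gap cannot be closed because the claimed constant is false.

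\textbf{Counterexamples.} Take $n=1$ and $A=0$. Then \eqref{eq:Gronwall} holds with $M=1$, $\omega=0$, so the claim reads $\|\calA^{-1}\|^2\le 2T+2$. With $x_0=0$ and $f\equiv 1$ the solution is $x(t)=t$, and $\|x\|_{H^1}^2/\|f\|_{L^2}^2=T^2/3+1$, which exceeds $2T+2$ once $T\ge 7$. By continuity in $\omega$, the $\omega\neq 0$ branch also fails for small $\omega>0$. The stable branch fails too: for $A=-\varepsilon$, $M=1$, $\omega=-\varepsilon$ and $f\equiv 1$, the ratio tends to $\varepsilon^{-2}$ as $T\to\infty$. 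The claim caps it at $(1+2\varepsilon^2)/\varepsilon+2$, which is smaller when $\varepsilon$ is small.

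\textbf{Where the paper goes wrong.} The paper reaches its constant by bounding $\int_0^T\!\int_0^t\|e^{(t-s)A}f(s)\|^2\,\mathrm ds\,\mathrm dt$ and then treating this as a bound on $\int_0^T\|\int_0^t e^{(t-s)A}f(s)\,\mathrm ds\|^2\,\mathrm dt$. That step silently drops the Cauchy--Schwarz factor $t$. The mismatch you noticed between your double integral and the stated single integral is exactly this lost factor, not a bookkeeping issue.

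\textbf{Your fallback does not work as stated.} The discrepancy is not a harmless numerical factor. It is of order $T$ when $\omega=0$ (growth $T^2$ instead of $T$) and of order $1/|\omega|$ when $\omega<0$. Your pointwise estimate pairs $e^{\omega(t-s)}$ with the full norm $\|f\|_{L^2(0,T)}$ at every $t$. After integrating over $t$, it contains $\int_0^T\!\int_0^t e^{2\omega r}\,\mathrm dr\,\mathrm dt\approx T/(2|\omega|)$ for $\omega<0$. So it is not uniform in $T$, and it would not give the subsequent corollary, which is the whole purpose of the proposition.

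\textbf{What to prove instead.} Young's convolution inequality, which you mention, is the right tool. Set $\ell_\omega(T):=\int_0^T e^{\omega t}\,\mathrm dt$; then Young's inequality plus Cauchy--Schwarz in $\R^2$ give
\[
\|x\|_{L^2}\le M\,\ell_{2\omega}(T)^{1/2}\,|x_0|+M\,\ell_\omega(T)\,\|f\|_{L^2},\qquad \|\calA^{-1}\|^2\le M^2\bigl(1+2\|A\|^2\bigr)\bigl(\ell_{2\omega}(T)+\ell_\omega(T)^2\bigr)+2 .
\]
This bound is sharp in order, since the example above shows $T^2$ growth at $\omega=0$. It also stays bounded as $T\to\infty$ when $\omega<0$, because $\ell_{2\omega}(T)\le 1/(2|\omega|)$ and $\ell_\omega(T)\le 1/|\omega|$. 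That uniformity is all the corollary needs. You should prove this corrected estimate and say explicitly that the proposition's constant has to be replaced by it.
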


The following result is immediate from Proposition~\ref{prop:ode}.
\begin{corollary}
    Let the control system $(A,B)$ be exponentially stabilizable, that is, there is $K\in \R^{m\times n}$ such that $\omega_0(A+BK)<0$. Then, choosing $\calK:  L^2(0,T; \R^n) \to L^2(0,T; \R^m)$ as in \eqref{eq:multop}, the feedbacked state operator $\widetilde{\calA} \coloneqq \calA - \calB \calK$ as in \eqref{eq:feedbackedcalA} is bounded and boundedly invertible uniformly with respect to the time horizon $T$.
\end{corollary}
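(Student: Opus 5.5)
The corollary should follow by applying Proposition~\ref{prop:ode} to the closed-loop matrix $A+BK$ in place of $A$. The feedbacked operator $\tcalA$ in \eqref{eq:feedbackedcalA} has exactly the form \eqref{eq:defnA} with $A$ replaced by $A+BK$. So Proposition~\ref{prop:ode} applies verbatim and gives that $\tcalA$ is bounded and boundedly invertible, with the stated norm bound.

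First, I check boundedness of $\tcalA$ itself. Note that $\calK$ is bounded on $L^2$, and the embedding $H^1(0,T;\R^n)\hookrightarrow L^2(0,T;\R^n)$ is bounded with constant $1$, independent of $T$. Hence $\|\calB\calK\|\le \|B\|\,\|K\|$, which is uniform in $T$. Alternatively, boundedness is part of the conclusion of Proposition~\ref{prop:ode} for the matrix $A+BK$.

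Second, I choose the stability constants. By exponential stabilizability, $\omega_0(A+BK)<0$, so I fix any $\omega$ with $\omega_0(A+BK)<\omega<0$. The remark following \eqref{eq:Gronwall} then yields $M\ge 1$, depending only on $A+BK$ and $\omega$ and not on $T$, such that $\|e^{t(A+BK)}\|\le Me^{\omega t}$ for all $t\ge0$.

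Third, I plug these constants into the $\omega\neq0$ case of Proposition~\ref{prop:ode}. Since $\omega<0$, the term $\frac{1}{\omega}(e^{2T\omega}-1) = \frac{1-e^{2T\omega}}{|\omega|}$ lies in $[0,1/|\omega|]$. This gives
\[
\|\tcalA^{-1}\| \le \Bigl(\tfrac{M^2}{|\omega|}\bigl(1+2\|A+BK\|^2\bigr)+2\Bigr)^{1/2},
\]
which is independent of $T$.

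The only delicate point is a sign issue. The bound in Proposition~\ref{prop:ode} is written as $\frac{M^2}{\omega}(\ldots)(e^{2T\omega}-1)$, and one must observe that for negative $\omega$ this expression is still nonnegative and bounded by $M^2(\ldots)/|\omega|$. This gives the horizon-uniform estimate, in contrast to the $e^{2T\omega}$ growth obtained for $\omega>0$.
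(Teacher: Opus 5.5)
Your proposal is correct and is essentially the paper's proof: apply Proposition~\ref{prop:ode} to the closed-loop matrix $A+BK$ with a rate $\omega\in(\omega_0(A+BK),0)$ (the paper fixes $\omega=\omega_0/2$), and use $\frac{e^{2T\omega}-1}{\omega}=\frac{1-e^{2T\omega}}{|\omega|}\le\frac{1}{|\omega|}$ to get a $T$-independent bound on $\|\tcalA^{-1}\|$. Your additional remark that $\|\calB\calK\|\le\|B\|\,\|K\|$ holds uniformly in $T$ goes slightly beyond the paper, whose proof only estimates the inverse.
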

\begin{proof}
   By exponential stabilizability of $(A,B)$, for all $\omega > \omega_0$ there exists $M \geq 1$ such that $$
        \left\|e^{t (A+BK)} \right\|_{\R^n\to\R^n} \leq M e^{\omega t} $$ for all $t\geq 0$.  We choose $\omega = \omega_0/2<0$. Then, it is immediate from Proposition \ref{prop:ode} that
    \begin{align*}
        \|\calA^{-1}\|_{L^2(0,T;\R^n)\times \R^n \to H^1(0,T;\R^n)} &\leq
           \left( \tfrac{M^2}{\omega} \left(1 + 2 \|A+BK\|^2\right)  (e^{2T\omega}-1)+2\right)^{1/2} \\
            &\leq \left( \tfrac{2M^2}{-\omega_0} \left(1 + 2 \|A+BK\|^2\right) +2\right)^{1/2} .
    \end{align*}
\end{proof}

\subsection{Elliptic PDEs}\label{subsec:elliptic}
In this part, we study an application to problems governed by elliptic partial differential equations. To this end, let $\Omega \subset \mathbb{R}^n$ be a bounded Lipschitz domain. We consider the following distributed control problem governed by a linear elliptic convection--diffusion--reaction equation:
\begin{equation}\label{eq:elliptic}
    - \operatorname{div}(\kappa(\omega)\nabla x(\omega))
    + b(\omega)^\top \nabla x(\omega)
    + c(\omega) x(\omega)
    = \chi_{\Omega_c}(\omega) u(\omega)
    \qquad \text{for }\omega \in \Omega,
\end{equation}
supplemented with suitable boundary conditions to be specified later. Here, $x:\Omega\to \R$ is the state variable and $u:\Omega \to \R$ denotes the distributed control acting on a subdomain which enters the equation via a characteristic function $\chi_{\Omega_c}$ of the control domain $\Omega_c\subset \Omega$. Throughout, we assume for the coefficients the standard regularity assumptions, that is  $\kappa\in L^\infty(\Omega;\R^{n\times n}),b\in L^\infty(\Omega;\R^n), c\in L^\infty(\Omega)$. Further, we assume that the matrix field is uniformly elliptic, that is, there is $\underline{\kappa}>0$ such that $\|v\|_{\R^n}^2 \leq v^\top \kappa(\omega)v$ for all $v\in \R^n$ and $\omega \in \Omega$. %

Let $\calX\subset H^1(\Omega)$, depending on the imposed boundary conditions. For instance, in the case of homogeneous Dirichlet boundary conditions one chooses $\calX = H_0^1(\Omega)$
whereas for Neumann or Robin boundary conditions one typically takes $\calX = H^1(\Omega)$. Define the bilinear form $a : \calX \times \calX \to \mathbb{R}$ by
\begin{align}\label{eq:secondorder_op}
    a(x,y)\coloneqq \langle \kappa\nabla x, \nabla y \rangle_{L^2(\Omega;\mathbb{R}^n)}
    +
      \langle b^\top \nabla x, y \rangle_{L^2(\Omega)}
    +
     c \langle x, y \rangle_{L^2(\Omega)}
    +
    a_{\partial \Omega}(x,y),
\end{align}
where the boundary contribution $a_{\partial \Omega}$ depends on the imposed boundary conditions. For example, $a_{\partial \Omega} \equiv 0$
in the Dirichlet and Neumann cases, whereas for Robin boundary conditions $\partial x = \kappa x$ on $\partial \Omega$ one has $    a_{\partial \Omega}(x,y)
    =
    \int_{\partial \Omega} \kappa x y \, ds$.
    
To ensure well-posedness, we assume throughout that the bilinear form $a$ is continuous and coercive on $\calX$. 
\begin{assumption}
There are constants $m,M>0$ such that
\begin{align*}
    | a(x,y)| \leq M \| x \|_\calX \|y\|_\calX , \qquad a(x,x) \geq m \| x \|_\calX^2 \quad \text{for all } x,y \in \calX.
\end{align*}
\end{assumption}
Our setting covers, for example, the following standard situations:
\begin{enumerate}[label=(\roman*)]
    \item $b=0$, $c=0$ and homogeneous Dirichlet boundary conditions. In this case the coercivity constant is usually given by the Poincaré inequality. 
    \item $b=0$ and $c>0$ and Neumann boundary conditions. Here, coercivity is obtained through the reaction term.
    \item convection--diffusion-type  operators with $c_0 - \frac12 \operatorname{div}(b) \ge 0$, assuming that $b$ admits a weak divergence.\end{enumerate}
Continuity of $a$ ensures the existence of a bounded linear operator $\calA: \calX \to \calX^*$ with 
\begin{equation}\label{eq:opdefinedelliptic}
    \langle \calA x,y \rangle_{\calX^*,\calX}= a(x,y) , \qquad x,y \in \calX.
\end{equation}
Define the operator $\calB: L^2(\Omega) \to \calX^*$ with $\calB u = \calR_{L^2(\Omega)} \chi_{\Omega_c}$. Then the problem \eqref{eq:elliptic} may abstractly be written by 
\begin{equation*}
    \calA x - \calB u=0.
\end{equation*}
The Lax-Milgram Theorem, \cite[Lemma 2.2.1.1.]{grisvard2011elliptic}, yields that $\calA$ is boundedly invertible. Further, one straightforwardly may compute for $x\in \calX,f\in \calX^*$,
\begin{equation}\label{eq:coerc1}
     \calA x = f \quad \Longrightarrow \quad m\|x\|^2_\calX \leq \langle \calA x,x\rangle_{\calX^*,\calX} = \langle f,x\rangle_{\calX^*,\calX} \leq \|f\|_{\calX^*}\|x\|_{\calX}
\end{equation}
and hence
\begin{align}\label{eq:coerc2}
    \| \calA^{-1} \|_{\calX^*\to \calX} \leq \frac1m.
\end{align}
However, %
this also shows that if the coercivity constant $m$ tends to $0$, then the operator norm of the inverse $\mathcal{A}^{-1}$ may become unbounded. Thus, as $m \to 0$, the problem becomes increasingly ill-conditioned, which may lead to significant difficulties in numerical approximations. Hence, one may introduce a feedback operator to improve the coercivity constant of the feedbacked state operator, that is, to obtain
\begin{align*}
   \langle (\calA - \calB \calK) x,x\rangle_{\calX^*,\calX} \geq \widetilde m \|x\|^2_{\calX} \quad \forall x\in \calX \quad \mathrm{with} \quad \widetilde m \gg m.
\end{align*}
such that the feedbacked operator satisfies
\begin{align*}
    \| \tcalA^{-1} \|_{\calX^*\to \calX} =  \| (\calA - \calB \calK)^{-1} \|_{\calX^*\to \calX} \leq \frac{1}{\widetilde m} \ll \frac1m.
\end{align*}
We now illustrate this approach for a concrete example.

\medskip

 \textbf{Distributed control of Neumann problem}.
We consider the Poisson problem with homogeneous Neumann boundary condition
\begin{align}\label{eq:Neumann}
\begin{aligned}
-\Delta x + \gamma x & = u & \qquad \text{on } & \Omega, \\
\partial_N x         & = 0 & \qquad \text{on } & \partial\Omega.
\end{aligned}
\end{align}
with $\gamma > 0$ and where $\partial_N$ denotes the normal derivative of $x$. Note that this corresponds to~\eqref{eq:elliptic} by choosing $\kappa \equiv I, b\equiv 0, c \equiv \gamma$ and $\Omega_c = \Omega$. The bilinear form associated with \eqref{eq:Neumann} is given by $a: H^1(\Omega) \times H^1(\Omega) \to \R$ with 
\begin{equation*}
     a(x,y) =  \langle \nabla x, \nabla y \rangle_{L^2(\Omega;\mathbb{R}^n)} + \gamma \langle x,y \rangle_{L^2(\Omega)}
\end{equation*}
We note that 
\begin{align}\label{eq:coerc_ex}
    a(x,x) = \|\nabla x\|^2_{L^2(\Omega;\R^d)} 
     + \gamma\|x\|^2_{L^2(\Omega)}
     \ge \min\{\gamma,1\}\|x\|^2_{H^1(\Omega)}.
\end{align}
For constant functions $z\in H^1(\Omega)$, we get $a(z,z) = \gamma \|x\|^2_{L^2(\Omega)} = \gamma \|x\|^2_{H^1(\Omega)}$ such that the coercivity constant in \eqref{eq:coerc_ex} is optimal in the case $\gamma < 1$ in the sense that it can not be chosen larger. In view of \eqref{eq:coerc1} and \eqref{eq:coerc2}, this yields that the associated operator $\calA : H^1(\Omega) \to H^1(\Omega)^*$ defined via \eqref{eq:opdefinedelliptic} satisfies the solution operator norm
 \begin{equation}\label{eq:elliptic_original}
      \| \calA^{-1} \|_{H^{1}(\Omega)^*\to H^{1}(\Omega)} = \frac{1}{\gamma}, \qquad \mathrm{if} \ \gamma < 1.
 \end{equation}
 Hence, if $\gamma>0$ is small, the solution operator norm becomes very large; in the limit $\gamma \to 0$, $\calA$ even lacks to be invertible due to a kernel consisting of the subspace of constant functions. 
We now show how to improve this solution operator norm by means of a feedback transformation. To this end, let $\calK : L^2(\Omega) \to L^2(\Omega)$ with 
\begin{equation}\label{eq:feedback_elliptic}
    \calK x = -(-\gamma  + 1)x = (\gamma-1) x
\end{equation}
Substituting $u=\calK x +v$, the control problem \eqref{eq:Neumann} is equivalently rewritten by 
\begin{align}\label{eq:ellipticfeedbacked}
    0 = -\Delta x +\gamma x - u = -\Delta x  + \gamma x - \calK x -v = -\Delta x + x -v \qquad \text{on } \Omega .
\end{align}
This new control problem is now driven by $\widetilde \calA: H^1(\Omega) \to H^{1}(\Omega)^*$ with 
\begin{equation*}
    \langle  \widetilde \calA x, y\rangle_{H^1(\Omega)^*,H^1(\Omega)} = \langle (\calA - \calB \calK)x, y\rangle_{H^1(\Omega)^*,H^1(\Omega)} = \langle \nabla x, \nabla y \rangle_{L^2(\Omega;\mathbb{R}^n)} + \langle x,y \rangle_{L^2(\Omega)}, 
\end{equation*}
which clearly satisfies  $\langle \tcalA x,x\rangle_{H^1(\Omega)^*,H^1(\Omega)} = \|x\|^2_{H^1(\Omega)}$
such that
\begin{equation}\label{eq:elliptic_feedbacked}
    \|\tcalA^{-1}\|_{H^1(\Omega)^*\to H^1(\Omega)} = 1,
\end{equation}
that is, the feedbacked solution operator is bounded independently of $\gamma$. Even the limiting case $\gamma = 0$, in which $\calA$ is surjective but not injective may thus be treated using a gradient or PPCG method, as we only require the feedbacked control-to-state map to solve \eqref{eq:OCP2}.

To apply Propositions~\ref{prop:grad_cond_feedbacked} and~\ref{prop:grad_ppcg_feedbacked}, we remain to verify Assumption~\ref{ass:feedback}. Assumption~\ref{ass:feedback}\ref{ass:iso}, i.e., continuous invertibility of $\tcalA$ follows directly from \eqref{eq:elliptic_feedbacked}.
Assumption~\ref{ass:feedback}\ref{ass:smallness} is clearly satisfied with $\delta_\calK = (1-\gamma)>0$. Hence, both Proposition~\ref{prop:grad_cond_feedbacked} and \ref{prop:grad_ppcg_feedbacked} are applicable.

\begin{remark}
We briefly discuss the case in which we only have control acting on a part of the domain $\Omega_c\subset \Omega$. Here, the feedbacked operator is then (slightly abusing the notation and denoting the operators in strong form) $\calA - \calB \calK = -\Delta + \gamma I + k\chi_{\Omega_c}$,
where $\chi_{\Omega_c}$ is the characteristic function of the control set and $k>0$ characterizes the feedback $\calK = -kI$. If this is a set of non-zero Lebesgue measure, then the operator above is still uniformly (in $\gamma$) elliptic due to the generalized Poincaré inequality~\cite{Tro10}. A similar comment applies for boundary control.
\end{remark}

In Figure~\ref{fig:grad:elliptic}, we show the results for the condensed conjugate gradient method as outlined and analyzed in Subsection~\ref{subsec:gradfeedbacked}. We solve the problem \eqref{eq:OCP} subject to the elliptic PDE~\eqref{eq:Neumann} on the unit square $[-1,1]^2\subset \R^2$, $\calY = L^2(\Omega)$, $\calC = I$, $y_\mathrm{ref}(\omega_1,\omega_2) = \sin(2\pi\omega_1) + \sin(2\pi \omega_2)$ for varying regularization parameters $\alpha$ in the cost functional and $\gamma > 0$ in the PDE. As a comparison, we solve the feedbacked counterpart~\eqref{eq:OCP2} using the feedback as defined in~\eqref{eq:feedback_elliptic}. Combining the deduced condition number estimates of Proposition~\ref{prop:gradient:condition} for the original problem and Proposition~\ref{prop:grad_ppcg_feedbacked} for its feedbacked version and the solution operator estimates \eqref{eq:elliptic_original} and \eqref{eq:elliptic_feedbacked}, we see that the condition number for the original problem deteriorates for $\gamma \to 0$, while it remains uniformly (in $\gamma$) bounded for the feedbacked problem. This may clearly be observed in Figure~\ref{fig:grad:elliptic}. When decreasing the parameter $\gamma$ (left to right), we observe that the convergence behavior of the original problem becomes worse and worse (for both choices of the regularization parameter depicted in the upper and lower row). In contrast, the convergence behavior of the feedbacked problem is visibly independent of $\gamma$. This clearly reflects the uniform bound on the solution operator \eqref{eq:elliptic_feedbacked}.
\begin{figure}[htb]
    \centering
\scalebox{0.8}{%
\begin{tikzpicture}

\begin{axis}[
    width=0.35\textwidth,
    height=0.3\textwidth,
    xlabel={CG iterations},
    ylabel={Relative error},
    xmin=1, xmax=9,
    xtick={1,3,5,7,9},
    ymode=log,
    ytickten={0,-2,-4,-6,-8,-10,-12,-14},
    grid=both,
    legend pos=north east,
    thick,
    title={$(\alpha, \gamma) = (0.001,0.1)$}
]
\addplot[mark=o, blue] coordinates {
    (1, 0.726838885538329)
    (3, 0.009929677474466606)
    (5, 0.0001050002019194221)
    (7, 6.379370368123632e-07)
    (9, 5.49298501624494e-11)
};
\addlegendentry{Original}
\addplot[mark=square*, red!70!black] coordinates {
    (1, 0.7061271740160248)
    (3, 0.0033930805278799335)
    (5, 5.718089076825183e-05)
    (7, 4.4696718182320363e-10)
    (9, 5.49298501624494e-11)
};
\addlegendentry{Feedbacked}
\end{axis}

\end{tikzpicture}
}
\scalebox{0.8}{%
\begin{tikzpicture}

\begin{axis}[
    width=0.35\textwidth,
    height=0.3\textwidth,
    xlabel={CG iterations},
    xmin=1, xmax=9,
    xtick={1,3,5,7,9},
    ymode=log,
    ytickten={0,-2,-4,-6,-8,-10,-12,-14},
    grid=both,
    legend pos=north east,
    thick,
    title={$(\alpha, \gamma) = (0.001,0.001)$}
]
\addplot[mark=o, blue] coordinates {
    (1, 0.7314318725637456)
    (3, 0.11358871083364624)
    (5, 0.0045227461133589145)
    (7, 0.0001063256463409562)
    (9, 6.459714474224866e-07)
};
\addplot[mark=square*, red!70!black] coordinates {
    (1, 0.7126181353324369)
    (3, 0.0035220926235417418)
    (5, 6.146756652743834e-05)
    (7, 5.713431812163206e-10)
    (9, 6.610434908164832e-11)
};
\end{axis}

\end{tikzpicture}
}
\scalebox{0.8}{%
\begin{tikzpicture}

\begin{axis}[
    width=0.35\textwidth,
    height=0.3\textwidth,
    xlabel={CG iterations},
    xmin=1, xmax=9,
    xtick={1,3,5,7,9},
    ymode=log,
    ytickten={0,-2,-4,-6,-8,-10,-12,-14},
    grid=both,
    legend pos=north east,
    thick,
    title={$(\alpha, \gamma) = (0.001,10^{-5})$}
]
\addplot[mark=o, blue] coordinates {
    (1, 0.7314781416694468)
    (3, 0.11360212992062516)
    (5, 0.004523264484656682)
    (7, 0.00010985271952152105)
    (9, 0.000106281447640779)
};
\addplot[mark=square*, red!70!black] coordinates {
    (1, 0.7126829817534912)
    (3, 0.0035234004375952416)
    (5, 6.151171794060015e-05)
    (7, 5.765141172326781e-10)
    (9, 4.60541640890441e-11)
};
\end{axis}

\end{tikzpicture}
}
\\
\scalebox{0.8}{%
\begin{tikzpicture}

\begin{axis}[
    width=0.35\textwidth,
    height=0.3\textwidth,
    xlabel={CG iterations},
    ylabel={Relative error},
    xmin=10, xmax=30,
    xtick={10,20,30},
    ymode=log,
    ytickten={0,-2,-4,-6,-8,-10,-12,-14},
    grid=both,
    legend pos=north east,
    thick,
    title={$(\alpha, \gamma) = (10^{-5},0.1)$}
]
\addplot[mark=o, blue] coordinates {
    (10, 0.011733192233152737)
    (20, 3.274645659341223e-05)
    (30, 8.762544894739712e-08)
};
\addplot[mark=square*, red!70!black] coordinates {
    (10, 0.0056951952936283535)
    (20, 5.507430247454323e-06)
    (30, 9.7472176661239e-09)
};
\end{axis}

\end{tikzpicture}
}
\scalebox{0.8}{ %
\begin{tikzpicture}

\begin{axis}[
    width=0.35\textwidth,
    height=0.3\textwidth,
    xlabel={CG iterations},
    xmin=10, xmax=30,
    xtick={10,20,30,40},
    ymode=log,
    ytickten={0,-2,-4,-6,-8,-10,-12,-14},
    grid=both,
    legend pos=north east,
    thick,
    title={$(\alpha, \gamma) = (10^{-5},0.001)$}
]
\addplot[mark=o, blue] coordinates {
    (10, 0.024615928043496813)
    (20, 0.00020473285790086252)
    (30, 5.467610166908902e-06)
    (40, 6.595957978664924e-08)
};
\addplot[mark=square*, red!70!black] coordinates {
    (10, 0.00746295101417509)
    (20, 5.676436165620361e-06)
    (30, 1.0483374674945448e-08)
    (40, 7.697565562018415e-11)
};
\end{axis}

\end{tikzpicture}
}
\scalebox{0.8}{%
\begin{tikzpicture}

\begin{axis}[
    width=0.35\textwidth,
    height=0.3\textwidth,
    xlabel={CG iterations},
    xmin=10, xmax=30,
    xtick={10,20,30,40},
    ymode=log,
    ytickten={0,-2,-4,-6,-8,-10,-12,-14},
    grid=both,
    legend pos=north east,
    thick,
    title={$(\alpha, \gamma) = (10^{-5},10^{-5})$}
]
\addplot[mark=o, blue] coordinates {
    (10, 0.10641841984860713)
    (20, 0.004220039417801839)
    (30, 0.00023336971197845894)
    (40, 2.2300893678039672e-05)
};
\addplot[mark=square*, red!70!black] coordinates {
    (10, 0.008633129418152677)
    (20, 5.283177595836857e-06)
    (30, 1.1323333055598219e-08)
    (40, 2.309752168457207e-09)
};
\end{axis}

\end{tikzpicture}
}
\caption{Gradient method for elliptic problem: Relative error over CG iterations for varying regularization parameters and reaction terms.}
\label{fig:grad:elliptic}
\end{figure}

In Figure~\ref{fig:ppcg:elliptic} we show the results for the PPCG method applied to the same problem. We observe the same behavior as for the gradient method: For decreasing parameters $\gamma > 0$, the convergence of the PPCG method deteriorates, while for the feedbacked problem it remains unchanged. This again reflects the theoretical findings corresponding to the condition number bounds of Propositions~\ref{prop:ppcg:condition} and~\ref{prop:grad_ppcg_feedbacked} together with the solution operator norms~\eqref{eq:elliptic_original} and~\eqref{eq:elliptic_feedbacked}.
\begin{figure}[htb]
    \centering
\scalebox{0.8}{%
\begin{tikzpicture}

\begin{axis}[
    width=0.35\textwidth,
    height=0.3\textwidth,
    xlabel={CG iterations},
    xmin=1, xmax=80,
    xtick={10,20,30,40,50,60,70,80},
    ymode=log,
    ytickten={0,-2,-4,-6,-8,-10,-12,-14},
    grid=both,
    legend pos=north east,
    thick,
    title={$(\alpha, \gamma) = (10^{-6},10^{-2})$}
]
\addplot[mark=o, blue] coordinates {
    (2, 0.04941758191711827)
    (4, 0.041436358256348296)
    (8, 0.02568596087456935)
    (15, 0.004946684162894679)
    (25, 0.0012084860081291373)
    (50, 7.129802433804703e-06)
    (75, 8.337032976813433e-08)
    (100, 1.7803515706513951e-09)
};
\addplot[mark=square*, red!70!black] coordinates {
    (2, 0.04930440410061063)
    (4, 0.03722440928886701)
    (8, 0.013157310461967477)
    (15, 0.002412359042185279)
    (25, 0.0001442648043380194)
    (50, 2.1375563926056738e-07)
    (75, 1.7172529261798029e-09)
    (100, 1.0459318892870784e-11)
};
\end{axis}

\end{tikzpicture}
}
\scalebox{0.8}{%
\begin{tikzpicture}

\begin{axis}[
    width=0.35\textwidth,
    height=0.3\textwidth,
    xlabel={CG iterations},
    xmin=1, xmax=80,
    xtick={10,20,30,40,50,60,70,80},
    ymode=log,
    ytickten={0,-2,-4,-6,-8,-10,-12,-14},
    grid=both,
    legend pos=north east,
    thick,
    title={$(\alpha, \gamma) = (10^{-6},0.0001)$}
]
\addplot[mark=o, blue] coordinates {
    (2, 0.05238256884398399)
    (4, 0.04941990765645316)
    (8, 0.03723718015859486)
    (15, 0.014684344455042218)
    (25, 0.004455847969368937)
    (50, 0.00012000886653588281)
    (75, 6.392101194496124e-06)
    (100, 8.31332772275508e-07)
};
\addplot[mark=square*, red!70!black] coordinates {
    (2, 0.0493068287993752)
    (4, 0.03997786067705025)
    (8, 0.014599400851054854)
    (15, 0.0024128033289966895)
    (25, 0.00012137174809026974)
    (50, 1.6894792557717154e-07)
    (75, 1.1546728778916745e-09)
    (100, 9.178088780535181e-12)
};
\end{axis}

\end{tikzpicture}
}
\scalebox{0.8}{%
\begin{tikzpicture}

\begin{axis}[
    width=0.35\textwidth,
    height=0.3\textwidth,
    xlabel={CG iterations},
    xmin=1, xmax=80,
    xtick={10,20,30,40,50,60,70,80},
    ymode=log,
    ytickten={0,-2,-4,-6,-8,-10,-12,-14},
    grid=both,
    legend pos=north east,
    thick,
    title={$(\alpha, \gamma) = (10^{-6},10^{-6})$}
]
\addplot[mark=o, blue] coordinates {
    (2, 0.05238257281489796)
    (4, 0.04941993147227241)
    (8, 0.03723719612965626)
    (15, 0.010736828818260553)
    (25, 0.0050314560491186286)
    (50, 0.000535200602925313)
    (75, 5.8784638994884564e-05)
    (100, 1.1259554690676407e-05)
};
\addplot[mark=square*, red!70!black] coordinates {
    (2, 0.04930685303952718)
    (4, 0.040380852926308254)
    (8, 0.014763042615168049)
    (15, 0.0024128086422607973)
    (25, 0.00012162722684809979)
    (50, 2.2556879783423876e-07)
    (75, 9.453292612667716e-10)
    (100, 8.07171153385791e-12)
};
\end{axis}

\end{tikzpicture}
}
\caption{PPCG method for elliptic problem: Relative error over CG iterations for varying regularization parameters and reaction terms.}
    \label{fig:ppcg:elliptic}
\end{figure}

\textbf{Optimization of the condition number \eqref{eq:condnumber}}.
Here, we briefly discuss the extreme case in which $\gamma = 0$. In this case, $\calA$ is not invertible. However, one may easily show that the feedbacked problem is invertible, as $$\sigma_{-\delta I} = \|(\calA - \calB \calK)^{-1}\|_{H^1(\Omega)^*\to H^1(\Omega)} = \max\{\delta^{-1},1\}$$ when using the feedback $\calK = -\delta I$ for some $\delta>0$. Further, in this case, we may minimize the expression \eqref{eq:condnumber} over $\delta$ and obtain $\delta_{\min}
=
\frac{1}{\sqrt{2\alpha}}$ with associated minimal value $\kappa_{\min}=9.$

In Figure~\ref{fig:condopt}, we illustrate the performance of the feedback transformation for different choices of $\alpha$. Therein, in the respective left figures plot the error of the PPCG iteration after 5 iterations over different choices of $\delta$ and the right figures plot the upper bound in \eqref{eq:condnumber}. We clearly observe a non-monotone behavior of the performance with respect to the chosen feedback. Further, the optimal performance matches quite well the optimization of the condition number estimate (depicted with a vertical red line), despite being only an upper bound.
\begin{figure}[htb]
    \centering
    \includegraphics[width=.8\linewidth]{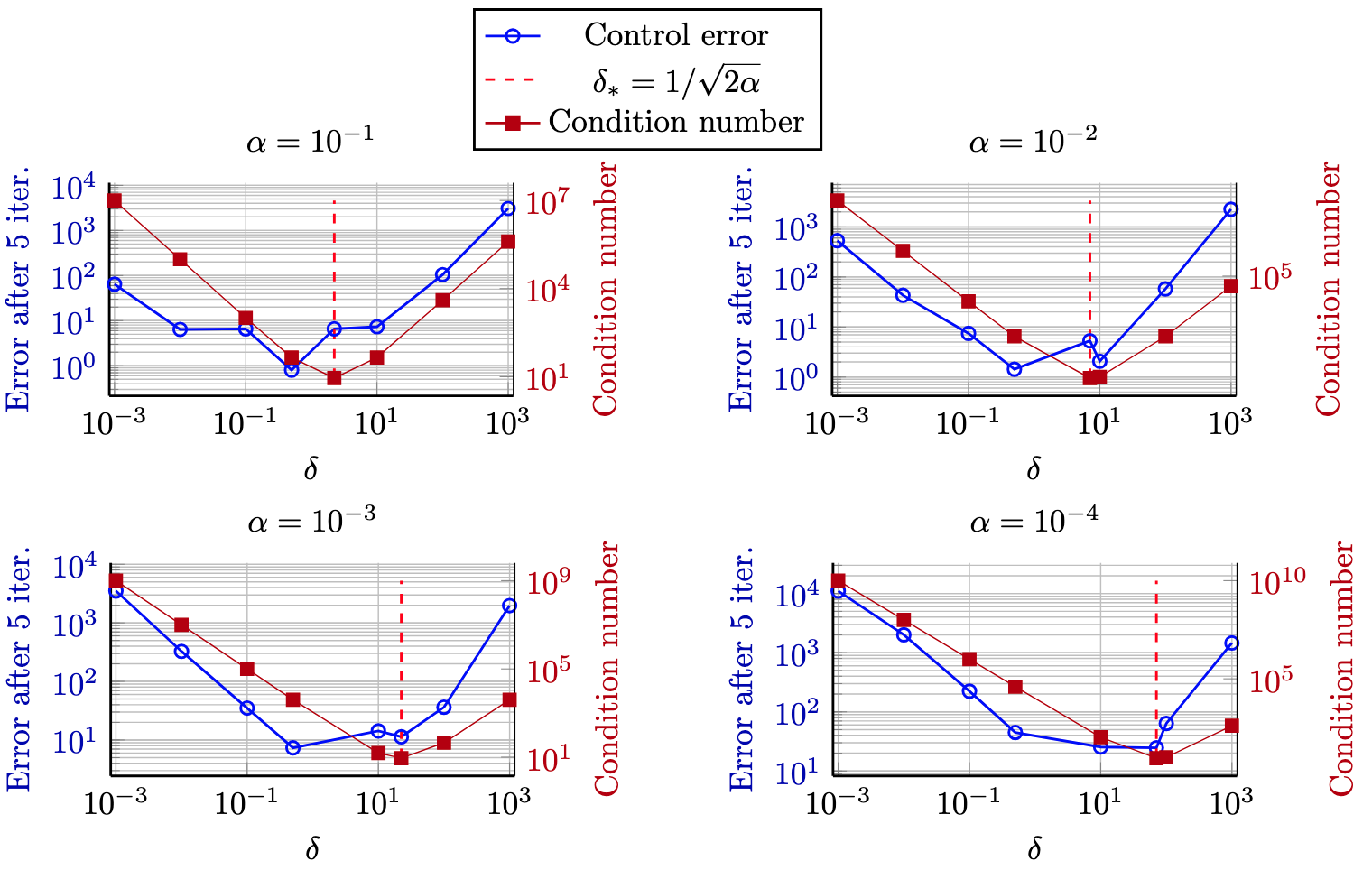}
    \caption{Error after a fixed number of iterations and condition number in dependence of the feedback strength.}
    \label{fig:condopt}
\end{figure}

\subsection{Parabolic PDEs}\label{subsec:parabolic}
In this part, we provide an application to parabolic problems  such as the heat equation, the advection-reaction-diffusion equation, Oseen equations or beam equation with viscous damping.
To this end, let $V$ be a separable and reflexive Banach space that is densely and continuously embedded into a Hilbert space $H$ giving rise to a Gelfand triple $   V\hookrightarrow H \hookrightarrow V^*$, where $\|v\|_H \le \|v\|_V \quad \forall v\in V$. 
We assume that the main operator $A\in L(V,V^*)$ satisfies the generalized ellipticity or G\aa rding inequality %
\begin{align}\label{eq:garding}
\exists c \in \R, m > 0, \beta \ge 0: \quad \langle Av,v\rangle_{V^*,V} + c \|v\|^2_H \geq \|v\|_{m,\beta}^2 := m \|v\|^2_V+\beta\|v\|^2_H.
\end{align}
Observe that we have some freedom in the choice of $m$, $c$, and $\beta$, which will turn out to be useful later on. The use of these weighed norms will allow us to obtain quantitative results on the norm of the solution operator.  

For example, $A$ could be a second-order operator in weak form induced by a bilinear form \eqref{eq:secondorder_op}, possibly with feed-back. Throughout this section, we abbreviate $\|A\|_{V\to V^*}$ by $\|A\|$ and write $\dot x$ for the weak time-derivative of $x$.  As a solution space, we consider the space
\begin{equation*}
\begin{array}{rcl}
      W(0,T) & \coloneqq & \left\lbrace x \in L^2(0,T; V) \, \middle| \, \dot x \in L^2(0,T; V^*) \right\rbrace.
\end{array}
\end{equation*}
Note that we have $W(0,T) \hookrightarrow C([0,T]; H)$ continuously \cite{Wloka1987} such that point evaluations, e.g., to prescribe initial values, may be defined.

A parabolic partial differential equation is then given by the initial value problem
\begin{align}\label{eq:parab}
    \mathrm{Find}\ x\in W(0,T):\ \dot x + Ax = g, \qquad x(0)=x^0,
\end{align}
where $g\in L^2(0,T;V^*)$ is a source term and $x^0\in H$ is an initial datum. To put this into perspective w.r.t.\ the abstract optimal control problem~\eqref{eq:OCP}, let $B\in L(U,V^*)$ be a control operator and $u\in L^2(0,T;U)$ a control function. Then, for given $l \in L^2(0,T;V^*), x^0\in H$, we may set
\begin{align*}
    \mathcal{A} x := \begin{bmatrix}
        \dot x + Ax \\
        x(0)
    \end{bmatrix}, \quad \mathcal{B}u = \begin{bmatrix}
        Bu\\
        0
    \end{bmatrix}, \quad f = \begin{bmatrix}
        g\\
        x^0
    \end{bmatrix}
\end{align*}
such that $\calX=W(0,T)$, $U = L^2(0,T;U)$, $P^*=L^2(0,T;V^*)\times H$. It is well-known by parabolic theory \cite{Wloka1987,Zeidler1990,Tro10}, that under the above assumptions and for given $u\in L^2(0,T;U)$, the problem  $\mathcal{A}x + \mathcal{B}u = f$ has a unique solution $x\in W(0,T)$. For a concise proof that $\mathcal{A}$ is indeed an isomorphism, we refer to \cite{schiela2013concise}. In this work, we are particularly interested in norms of the solution operator and suitably feedbacked counterparts. To alleviate notation, we define the following scaled norms
 \[
   \|x\|_{L^2_{c,m,\beta}} := \|e^{-c t}\|x(t)\|_{m,\beta}\|_{L^2(0,T)},  \qquad \|f\|_{L^2_{c,m,\beta},*}:= \|e^{-c t}\|f(t)\|_{m,\beta,*}\|_{L^2(0,T)},
 \]
 where $\|\cdot\|_{m,\beta}$ is defined in \eqref{eq:garding} and $\|\ell\|_{m,\beta,*} := \sup_{\|v\|_{m,\beta}\le 1} |\ell(v)|$.

\begin{proposition}\label{prop:bound_parab}
  Suppose that \eqref{eq:garding} holds and $\calA x=(g,x^0)$ for data $g\in L^2(0,T;V^*)$ and $x^0 \in H$. Then
  \begin{subequations}\label{eq:paraest}
  \begin{align}\label{eq:paraest1}
  \max_{t\in [0,T]} (e^{-ct} \|x(t)\|_H)+\|x\|^2_{L^2_{c,m,\beta}} \!\!&\le \|g\|^2_{L^2_{c,m,\beta},*}+\|x^0\|_H^2\\
\|e^{-ct}\dot x\|_{L^2(0,T;V^*)}&\le \left(\tfrac{\|A\|}{m}+1\right)\|e^{-c\cdot} g\|_{L^2(0,T;V^*)}+\tfrac{\|A\|}{\sqrt{m}}\|x^0\|_H. \label{eq:paraest2}
\end{align}
\end{subequations}
If \eqref{eq:garding} holds with $c\leq 0$ and hence with $c=0$, that is, the main operator $A$ is elliptic and the equation is exponentially stable, there is a constant $C = C(\|A\|,m)$ such that 
\begin{align*}
    \|\calA^{-1}\|_{L^2(0,T;V^*) \times H \to W(0,T)} \leq C.
\end{align*}
\end{proposition}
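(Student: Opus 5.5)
The plan is the standard energy estimate for the exponentially rescaled state. I would set $w(t) := e^{-ct}x(t)$. Then $w\in W(0,T)$ with $\dot w + (A+cI)w = e^{-ct}g =: \tilde g$ and $w(0)=x^0$. Here $cI$ denotes the embedding $V\hookrightarrow H\hookrightarrow V^*$. By \eqref{eq:garding}, the operator $A+cI$ satisfies $\langle (A+cI)v,v\rangle \ge \|v\|_{m,\beta}^2$. So the Gårding inequality becomes genuine coercivity with respect to the weighted norm $\|\cdot\|_{m,\beta}$.

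\textbf{Step 1: estimate \eqref{eq:paraest1}.} I would test the equation for $w$ with $w$ itself. I would use the product rule $\tfrac{d}{dt}\|w\|_H^2 = 2\langle \dot w,w\rangle_{V^*,V}$, valid for $w\in W(0,T)$ by the standard Lions–Magenes lemma. This gives
\[
\tfrac12\tfrac{d}{dt}\|w\|_H^2 + \|w\|_{m,\beta}^2 \le \langle \tilde g,w\rangle \le \|\tilde g\|_{m,\beta,*}\|w\|_{m,\beta} \le \tfrac12\|\tilde g\|_{m,\beta,*}^2+\tfrac12\|w\|_{m,\beta}^2 .
\]
Integrating from $0$ to $t$ yields $\|w(t)\|_H^2+\int_0^t\|w\|_{m,\beta}^2 \le \|x^0\|_H^2+\int_0^t\|\tilde g\|_{m,\beta,*}^2$. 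Taking the supremum over $t$ for the first term and $t=T$ for the second gives \eqref{eq:paraest1}, with the first term read as squared. Adding the two bounds only costs a factor $2$, or one can use the max of both sides.

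\textbf{Step 2: estimate \eqref{eq:paraest2}.} Next I would read off the time derivative from the equation, $\dot w = \tilde g - (A+cI)w$, and use $e^{-ct}\dot x = \dot w + c w$, so that $e^{-ct}\dot x = \tilde g - Aw$. Hence
\[
\|e^{-ct}\dot x\|_{L^2(V^*)}\le \|\tilde g\|_{L^2(V^*)}+\|A\|\,\|w\|_{L^2(V)} .
\]
It remains to bound $\|w\|_{L^2(V)}$. From $m\|v\|_V^2\le\|v\|_{m,\beta}^2$ we get $\|w\|_{L^2(V)}\le m^{-1/2}\|w\|_{L^2_{m,\beta}}$. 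Conversely, $\|\ell\|_{m,\beta,*}\le m^{-1/2}\|\ell\|_{V^*}$. Combining these with Step 1 gives $\|w\|_{L^2(V)}\le m^{-1}\|\tilde g\|_{L^2(V^*)}+m^{-1/2}\|x^0\|_H$, and inserting this yields \eqref{eq:paraest2}.

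\textbf{Step 3: uniform bound.} If $c\le 0$, then $c\|v\|_H^2\le 0$, so \eqref{eq:garding} also holds with $c=0$ and the same $m,\beta$. All weights $e^{-ct}$ then disappear. Step 1 bounds $\|x\|_{L^2(V)}$ and Step 2 bounds $\|\dot x\|_{L^2(V^*)}$, each by constants depending only on $\|A\|$ and $m$ times $\|g\|_{L^2(V^*)}+\|x^0\|_H$. This gives the $W(0,T)$-bound with $C$ independent of $T$.

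The main obstacle is bookkeeping rather than analysis. One must handle the conversions between $\|\cdot\|_{m,\beta}$, its dual, and the $V$, $V^*$ norms carefully so that the constants come out exactly as stated, in particular the factor $\|A\|/m$ versus $\|A\|/\sqrt m$. One must also justify the product rule in $W(0,T)$, which I take from the cited parabolic theory.
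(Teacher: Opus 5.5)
Your proposal is correct and follows the paper's proof essentially step for step: the rescaling $e^{-ct}x$ that turns \eqref{eq:garding} into coercivity, the energy test with Young's inequality, reading off $e^{-ct}\dot x = e^{-ct}g - A(e^{-ct}x)$ from the equation with the same norm conversions, and then specializing to $c=0$. Your caveat on \eqref{eq:paraest1} is justified and more careful than the paper: its ``$T$ is arbitrary'' step also only gives $\|e^{-ct}x(t)\|_H^2+\int_0^t e^{-2cs}\|x(s)\|_{m,\beta}^2\,\mathrm{d}s \le \|x^0\|_H^2+\int_0^t e^{-2cs}\|g(s)\|_{m,\beta,*}^2\,\mathrm{d}s$ for each $t$, and the literal sum of the maximum and the full integral cannot be bounded by the right-hand side as written (for $g=0$, $x^0\neq 0$ the maximum alone already equals $\|x^0\|_H^2$), so the factor $2$ or the maximum of the two terms that you propose is genuinely needed.
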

\begin{proof}
  Due to $e^{-c t} \dot x = \frac{\partial}{\partial t}\left(e^{-c t} x\right) +c e^{-c t} x,$ the state $x\in W(0,T)$ solves \eqref{eq:parab} if and only if the scaled state $\tilde x = e^{-c t} x\in W(0,T)$ solves
\begin{align}\label{eq:modified_parabolic}
    \dot{\tilde x} + (A+c I)\tilde x = e^{-c t} g, \qquad \tilde x(0)=x^0.
\end{align}
Moreover, we have that the governing operator of this equation satisfies
\begin{align*}
    \langle (A + c I) \tilde x,\tilde x\rangle_{V^*,V} = \langle A\tilde x,\tilde x\rangle_{V^*,V} + c \|\tilde x\|^2_H \geq \|\tilde x\|^2_{m,\beta}
\end{align*}
due to \eqref{eq:garding}. Testing \eqref{eq:modified_parabolic} with $(\tilde x,\tilde x(0))$ and using the formula of integration by parts
$\langle \dot \tilde x ,\tilde x \rangle_{L^2(0,T;V^*),L^2(0,T;V)} = \frac12(\|\tilde x(T)\|_H^2 - \|\tilde x(0)\|^2_H)$ we get
    \begin{align*}
        \tfrac12 \|\tilde x(T)\|^2_H + \|\tilde x\|_{L^2_{0,m,\beta}}^2 &\leq \tfrac12 \|x^0\|^2_H + \|e^{-c t} g\|_{L^2_{0,m,\beta},*} \|\tilde x\|_{L^2_{0,m,\beta}}\\
        &\leq \tfrac12 \|x^0\|^2_H + \tfrac{1}{2} \|g\|^2_{{L^2_{c,m,\beta}},*} + \tfrac{1}{2} \|\tilde x\|^2_{L^2_{0,m,\beta}}.
    \end{align*}
 In terms of original variables and scaled norms, this yields
 \[
   \|e^{-c T}x(T)\|_H^2+\|x\|^2_{L^2_{c,m,\beta}}\le \|x^0\|_H^2+\|g\|^2_{L^2_{c,m,\beta},*}.
 \]
 Since the choice of $T$ is arbitrary, we finally obtain \eqref{eq:paraest1}.
 
 By bootstrapping with the equation $\dot x = -Ax+g$ we finally get:
 \begin{align*}
 \|e^{-ct}\dot x\|_{L^2(0,T;V^*)} &\le \|A\|\|e^{-ct} x\|_{L^2(0,T;V)}+\|e^{-ct} g\|_{L^2(0,T;V^*)}\\
 &\le \tfrac{\|A\|}{\sqrt{m}}\sqrt{\|g\|^2_{L^2_{c,m,\beta}}+\|x^0\|^2_H}+\|e^{-ct} g\|_{L^2(0,T;V^*)}\\
 &\le \left(\tfrac{\|A\|}{m}+1\right)\|e^{-ct} g\|_{L^2(0,T;V^*)}+\tfrac{\|A\|}{\sqrt{m}}\|x^0\|_H
 \end{align*}
 and thus \eqref{eq:paraest2}. The last inequality follows by setting $c=0$ in the above estimates, using $\|x\|_{L^2_{0,m,\beta}} \geq \|x\|_{L^2_{0,m,0}} = m \|x\|_{L^2(0,T;V)}$ and adding both \eqref{eq:paraest1} and \eqref{eq:paraest2}. 
\end{proof}
We briefly comment on the consequence of the estimate \eqref{eq:paraest1} that directly implies an estimate on the standard norm due to
\begin{align*}
 \|e^{-ct}x\|_{L_2(0,T;V)} &\le \tfrac{1}{\sqrt{m}}\|x\|_{L^2_{c,m,\beta}}, \quad 
 \|e^{-ct}x\|_{_{L_2(0,T;H)}} \le \tfrac{1}{\sqrt{m+\beta}} \|x\|_{L^2_{c,m,\beta}} \\
 \|g\|_{L^2_{c,m,\beta},*} &\le \tfrac{1}{\sqrt{m}}\|e^{-ct}g\|_{L_2(0,T;V^*)}, \quad \|g\|_{L^2_{c,m,\beta},*} \le \tfrac{1}{\sqrt{m+\beta}} \|e^{-ct}g\|_{L_2(0,T;H)} 
\end{align*}
for $x\in L^2(0,T;V)$ and $f\in L^2(0,T;H)$.

Let us consider now the case of distributed control and observation, i.e., $\calU = L^2(0,T;H)$ and $B\in L(H,H)$. 
As an immediate result, we obtain the following.
\begin{corollary}
  Suppose that \eqref{eq:garding} holds. Then
   \begin{align}
       \sigma_0= \|\calC\mathcal{A}^{-1}\calB\|_{L^2(0,T;H) \to L^2(0,T;H)} &\leq \tfrac{e^{|c|T}}{m+\beta}
    \end{align}
\end{corollary}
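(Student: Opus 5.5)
The plan is to read off the bound from the energy estimate \eqref{eq:paraest1} of Proposition~\ref{prop:bound_parab}, applied with zero initial datum, and then to remove the exponential weights. Throughout, $\calC$ is the embedding $W(0,T)\hookrightarrow L^2(0,T;H)$, since the observation is distributed. I will also assume $\|B\|_{H\to H}\le 1$, e.g.\ $B=I$. For a general bounded $B$, the right-hand side of the claimed estimate simply acquires the factor $\|B\|_{H\to H}$.

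\textbf{Setup.} Fix $u\in L^2(0,T;H)$ and let $x=\calA^{-1}\calB u$. Then $x$ solves \eqref{eq:parab} with $g=Bu\in L^2(0,T;H)\subset L^2(0,T;V^*)$ and $x^0=0$. Dropping the nonnegative first term on the left of \eqref{eq:paraest1}, Proposition~\ref{prop:bound_parab} gives
\[
\|x\|_{L^2_{c,m,\beta}}\le \|Bu\|_{L^2_{c,m,\beta},*}.
\]

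\textbf{Comparing the weighted norms with $H$-norms.} I will verify the two $H$-norm comparisons stated after Proposition~\ref{prop:bound_parab}, since they give the factor $(m+\beta)^{-1}$.
\begin{itemize}
\item Because $\|v\|_H\le\|v\|_V$, we have $\|v\|_{m,\beta}^2\ge (m+\beta)\|v\|_H^2$. Consequently $\|e^{-ct}x\|_{L^2(0,T;H)}\le (m+\beta)^{-1/2}\|x\|_{L^2_{c,m,\beta}}$.
\item For $\ell\in H$ we have $|\ell(v)|\le\|\ell\|_H\|v\|_H\le (m+\beta)^{-1/2}\|\ell\|_H\|v\|_{m,\beta}$. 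Hence $\|\ell\|_{m,\beta,*}\le(m+\beta)^{-1/2}\|\ell\|_H$.
\end{itemize}
Chaining these with the estimate from the setup yields
\[
\|e^{-ct}x\|_{L^2(0,T;H)}\le \tfrac{1}{m+\beta}\|e^{-ct}Bu\|_{L^2(0,T;H)}.
\]

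\textbf{Removing the weights.} Write $\|x\|_{L^2(0,T;H)}\le \sup_{t\in[0,T]}e^{ct}\,\|e^{-ct}x\|_{L^2(0,T;H)}$ and $\|e^{-ct}Bu\|_{L^2(0,T;H)}\le \sup_{t\in[0,T]}e^{-ct}\,\|u\|_{L^2(0,T;H)}$, using $\|B\|\le 1$. The product of the two suprema is $e^{cT}\cdot 1$ if $c\ge0$ and $1\cdot e^{-cT}$ if $c<0$, so in both cases it equals $e^{|c|T}$. Taking the supremum over $\|u\|_{L^2(0,T;H)}\le 1$ gives $\sigma_0\le e^{|c|T}/(m+\beta)$.

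\textbf{Main obstacle.} There is no real difficulty; the argument is bookkeeping. The only point needing care is the second comparison: it uses that $Bu$ takes values in $H$, so that the dual weighted norm is controlled by the $H$-norm with the favourable factor $(m+\beta)^{-1/2}$ rather than $m^{-1/2}$. This is exactly why the distributed-control setting $B\in L(H,H)$ is assumed.
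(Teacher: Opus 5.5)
Your proof is correct and follows the paper's argument: apply \eqref{eq:paraest1} with $x^0=0$, convert the weighted norms to $H$-norms via the two $(m+\beta)^{-1/2}$ comparisons stated after Proposition~\ref{prop:bound_parab}, and then remove the weights $e^{\pm ct}$ using their maxima at the endpoints of $[0,T]$, whose product is $e^{|c|T}$. Your caveat about the factor $\|B\|_{H\to H}$ is also justified, since the paper's statement tacitly assumes $\|B\|_{H\to H}\le 1$ (e.g.\ $B=I$).
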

\begin{proof}
 This follows from
$
   \|e^{-c t}\|_\infty^{-1}\|e^{-ct}v\|_{L^2} \le \|v\|_{L^2} \le \|e^{c t}\|_\infty\|e^{-ct} v\|_{L^2}
 $
 and our previous results, taking into account that $e^{\pm c t}$ assumes its maximum at a boundary point of the interval $[0,T]$.
\end{proof}
 Next, we provide a concrete example to illustrate the results.

\medskip

 \textbf{Heat equation with Neumann boundary conditions. } We consider the very same setup as in \eqref{eq:Neumann}, but now allow for $\gamma \in \R$, that is, also negative values. Here we have that \eqref{eq:garding} holds for $A=-\Delta + \gamma I$ (associated with $\calA$) with $c=0$ and $m=\min\{\gamma,1\}$ if $\gamma >0$ and with $c= - \gamma + 1>0$ and $m=1$ for $\gamma \leq 0$. In the former case, the uncontrolled system is exponentially stable; in the latter it is stable (if $\gamma = 0$) or exponentially unstable (if $\gamma <0$). For the governing operator of $\tcalA$, i.e. $A-BK$, \eqref{eq:garding} always holds with $c=0$ by the choice of the feedback due to \eqref{eq:ellipticfeedbacked}.

In Figure~\ref{fig:grad:parabolic}, we illustrate the results for the condensed gradient method. For $\gamma>0$ (left), the problem is exponentially stable. However, for $\gamma < 0$, the (uncontrolled) problem corresponding to $\calA$ is exponentially unstable. Hence, when increasing the horizon $T=10$ to $T=30$ (from top to bottom), the convergence behavior significantly worsens for the original problem. This reflects the solution operator norm of Proposition~\ref{prop:bound_parab} depending exponentially on the time horizon. and its impact on the condition number in Proposition~\ref{prop:gradient:condition}. For the feedbacked problem, the convergence behavior again is robust w.r.t.\ an increased time horizon. Hereby we note, however, that the iteration numbers increase (in contrast to the elliptic case of Subsection~\ref{subsec:elliptic}), as an increased horizon leads to a higher number of time discretization points.

\begin{figure}[htb]
    \centering
\scalebox{0.8}{%
\begin{tikzpicture}

\begin{axis}[
    width=0.35\textwidth,
    height=0.3\textwidth,
    xlabel={CG iterations},
    ylabel={Relative error},
    xmin=10, xmax=50,
    xtick={10,20,50},
    ymode=log,
    ytickten={0,-2,-4,-6,-8,-10,-12,-14},
    grid=both,
    legend pos=north east,
    thick,
    title={$(T,\alpha, \gamma) = (10,0.001,0.5)$}
]
\addplot[mark=o, blue] coordinates {
    (10, 0.33243351519480374)
    (20, 0.011060359908102639)
    (50, 2.1870946142991758e-06)
};
\addlegendentry{Original}
\addplot[mark=square*, red!70!black] coordinates {
    (10, 0.2982651569666979)
    (20, 0.009143035468781433)
    (50, 1.2342818969637377e-07)
};
\addlegendentry{Feedbacked}
\end{axis}

\end{tikzpicture}
}
\scalebox{0.8}{%
\begin{tikzpicture}

\begin{axis}[
    width=0.35\textwidth,
    height=0.3\textwidth,
    xlabel={CG iterations},
    xmin=10, xmax=100,
    xtick={10,50,100},
    ymode=log,
    ytickten={0,-2,-4,-6,-8,-10,-12,-14},
    grid=both,
    legend pos=north east,
    thick,
    title={$(T,\alpha, \gamma) = (10,0.001,-0.5)$}
]
\addplot[mark=o, blue] coordinates {
    (10, 0.8204462370764743)
    (20, 0.12301865727156086)
    (50, 0.0014286185413296363)
    (100, 1.1386718166156318e-08)
};
\addplot[mark=square*, red!70!black] coordinates {
    (10, 0.30263638912738344)
    (20, 0.00884505808847306)
    (50, 1.0855608805835202e-07)
    (100, 7.924603262603529e-11)
};
\end{axis}

\end{tikzpicture}
}
\scalebox{0.8}{%
\begin{tikzpicture}

\begin{axis}[
    width=0.35\textwidth,
    height=0.3\textwidth,
    xlabel={CG iterations},
    xmin=10, xmax=400,
    xtick={10,100,200,400},
    ymode=log,
    ytickten={0,-2,-4,-6,-8,-10,-12,-14},
    grid=both,
    legend pos=north east,
    thick,
    title={$(T,\alpha, \gamma) = (10,10^{-5},-0.5)$}
]
\addplot[mark=o, blue] coordinates {
    (10, 0.9877133452006225)
    (20, 0.8279236834352088)
    (50, 0.46551600435916024)
    (100, 0.06775411227211497)
    (150, 0.019270941447641795)
    (200, 0.003641796041956418)
    (400, 8.727772216441561e-06)
};
\addplot[mark=square*, red!70!black] coordinates {
    (10, 0.9023312355361782)
    (20, 0.5817912329622617)
    (50, 0.11266328361844051)
    (100, 0.004336185203693907)
    (150, 0.00012068332700066538)
    (200, 7.778649882825784e-06)
    (400, 7.815852871387957e-10)
};
\end{axis}

\end{tikzpicture}
}\\
\scalebox{0.8}{%
\begin{tikzpicture}

\begin{axis}[
    width=0.35\textwidth,
    height=0.3\textwidth,
    xlabel={CG iterations},
    ylabel={Relative error},
    xmin=10, xmax=150,
    xtick={10,50,100,150},
    ymode=log,
    ytickten={0,-2,-4,-6,-8,-10,-12,-14},
    grid=both,
    legend pos=north east,
    thick,
    title={$(T,\alpha, \gamma) = (30,0.001,0.5)$}
]
\addplot[mark=o, blue] coordinates {
    (10, 0.7776617464884915)
    (20, 0.23795658730204677)
    (50, 0.002999718654450941)
    (100, 6.417287392719695e-07)
    (150, 5.581024844764491e-10)
};
\addplot[mark=square*, red!70!black] coordinates {
    (10, 0.5217966012537908)
    (20, 0.12333015314925289)
    (50, 0.0005991653039803581)
    (100, 1.2110420118253084e-08)
    (150, 3.2327179809492613e-13)
};
\end{axis}

\end{tikzpicture}
}
\scalebox{0.8}{%
\begin{tikzpicture}

\begin{axis}[
    width=0.35\textwidth,
    height=0.3\textwidth,
    xlabel={CG iterations},
    xmin=10, xmax=400,
    xtick={10,100,200,400},
    ymode=log,
    ytickten={0,-2,-4,-6,-8,-10,-12,-14},
    grid=both,
    legend pos=north east,
    thick,
    title={$(T,\alpha, \gamma) = (30,0.001,-0.5)$}
]
\addplot[mark=o, blue] coordinates {
    (10, 0.9802554530941032)
    (20, 0.8286049752321615)
    (50, 0.3773840497283715)
    (100, 0.11251467131309723)
    (150, 0.0145298363665835)
    (200, 0.004216259455460214)
    (400, 1.943368372726511e-05)
};
\addplot[mark=square*, red!70!black] coordinates {
    (10, 0.5391080260343237)
    (20, 0.12628053278833976)
    (50, 0.0005799841527966209)
    (100, 1.2874032088309154e-08)
    (150, 3.8627187159246203e-13)
    (200, 0.0)
    (400, 0.0)
};
\end{axis}

\end{tikzpicture}
}
\scalebox{0.8}{%
\begin{tikzpicture}

\begin{axis}[
    width=0.35\textwidth,
    height=0.3\textwidth,
    xlabel={CG iterations},
    xmin=10, xmax=400,
    xtick={10,100,200,400},
    ymode=log,
    ytickten={0,-2,-4,-6,-8,-10,-12,-14},
    grid=both,
    legend pos=north east,
    thick,
    title={$(T,\alpha, \gamma) = (30,10^{-5},-0.5)$}
]
\addplot[mark=o, blue] coordinates {
    (10, 0.9989087399732387)
    (20, 0.9884862136778999)
    (50, 0.920748606212723)
    (100, 0.8168979711920907)
    (150, 0.6563098080968072)
    (200, 0.5541764296006213)
    (400, 0.23036840042425993)
};
\addplot[mark=square*, red!70!black] coordinates {
    (10, 0.953556703521092)
    (20, 0.8315099962292426)
    (50, 0.4269777668288865)
    (100, 0.07676504900653772)
    (150, 0.01858617161475176)
    (200, 0.0043076249959489755)
    (400, 1.2510527670168364e-05)
};
\end{axis}

\end{tikzpicture}
}
\caption{Gradient method for parabolic problem: Relative error over CG iterations for varying Tikhonov parameters, reaction terms and time horizons.}
\label{fig:grad:parabolic}
\end{figure}

In Figure~\ref{fig:ppcg:parabolic} we report the iteration numbers for the PPCG method for the fixed regularization parameter $\alpha = 10^{-3}$ here. However, we see again that while the feedbacked version is very robust with respect to increasing the horizon length, the original problem faces severe numerical difficulties in particular for the unstable case $\gamma = -0.5$ depicted in the lower row.

\begin{figure}[htb]
    \centering
\scalebox{0.8}{%
\begin{tikzpicture}

\begin{axis}[
    width=0.35\textwidth,
    height=0.3\textwidth,
    xlabel={CG iterations},
    ylabel={Relative error},
    xmin=5, xmax=100,
    xtick={10,30,50,70,100},
    ymode=log,
    ytickten={0,-2,-4,-6,-8,-10,-12,-14},
    grid=both,
    legend pos=north east,
    thick,
    title={$(T, \gamma) = (10,0.5)$}
]
\addplot[mark=o, blue] coordinates {
    (5, 0.8001613046811151)
    (10, 0.334008754687629)
    (20, 0.011958523245873482)
    (30, 0.0010760195330775974)
    (40, 6.0317063738765126e-05)
    (50, 9.632260172461268e-07)
    (70, 2.483714691213275e-10)
    (100, 6.43443708602032e-15)
};
\addlegendentry{Original}
\addplot[mark=square*, red!70!black] coordinates {
    (5, 0.7703323526778391)
    (10, 0.30280483455411195)
    (20, 0.010595624956097949)
    (30, 0.00013193564489260564)
    (40, 1.862209031192652e-05)
    (50, 1.733320281932484e-07)
    (70, 4.445046728536667e-12)
    (100, 0.0)
};
\addlegendentry{Feedbacked}
\end{axis}

\end{tikzpicture}
}
\scalebox{0.8}{%
\begin{tikzpicture}

\begin{axis}[
    width=0.35\textwidth,
    height=0.3\textwidth,
    xlabel={CG iterations},
    xmin=5, xmax=100,
    xtick={10,30,50,70,100},
    ymode=log,
    ytickten={0,-2,-4,-6,-8,-10,-12,-14},
    grid=both,
    legend pos=north east,
    thick,
    title={$(T, \gamma) = (30,0.5)$}
]
\addplot[mark=o, blue] coordinates {
    (5, 0.8567827784869313)
    (10, 0.7779716427573538)
    (20, 0.239195367856418)
    (30, 0.08064450748133314)
    (40, 0.01384393627757902)
    (50, 0.003327980919736818)
    (70, 0.0001098299894949521)
    (100, 9.050360338771021e-07)
};
\addplot[mark=square*, red!70!black] coordinates {
    (5, 0.7774062917344369)
    (10, 0.5180695953509853)
    (20, 0.12791974440407175)
    (30, 0.0123800336051935)
    (40, 0.002932166469481069)
    (50, 0.000764787341564989)
    (70, 1.0962200150202846e-05)
    (100, 1.8487354737178365e-08)
};
\end{axis}

\end{tikzpicture}
}
\scalebox{0.8}{%
\begin{tikzpicture}

\begin{axis}[
    width=0.35\textwidth,
    height=0.3\textwidth,
    xlabel={CG iterations},
    xmin=5, xmax=100,
    xtick={10,30,50,70,100},
    ymode=log,
    ytickten={0,-2,-4,-6,-8,-10,-12,-14},
    grid=both,
    legend pos=north east,
    thick,
    title={$(T,\gamma) = (50,0.5)$}
]
\addplot[mark=o, blue] coordinates {
    (5, 0.8752619652898339)
    (10, 0.7823320151536354)
    (20, 0.3175891630951712)
    (30, 0.12515734706283974)
    (40, 0.06395796658386114)
    (50, 0.012722478465550543)
    (70, 0.001869615554663436)
    (100, 6.397775209645658e-05)
};
\addplot[mark=square*, red!70!black] coordinates {
    (5, 0.7772406971596546)
    (10, 0.5557916930398292)
    (20, 0.14481341422679953)
    (30, 0.05392831574668583)
    (40, 0.009332051506731557)
    (50, 0.001753821274181866)
    (70, 8.197220252472123e-05)
    (100, 8.24252345299534e-07)
};
\end{axis}

\end{tikzpicture}
}\\
\scalebox{0.8}{%
\begin{tikzpicture}

\begin{axis}[
    width=0.35\textwidth,
    height=0.3\textwidth,
    xlabel={CG iterations},
    ylabel={Relative error},
    xmin=5, xmax=100,
    xtick={10,30,50,70,100},
    ymode=log,
    ytickten={0,-2,-4,-6,-8,-10,-12,-14},
    grid=both,
    legend pos=north east,
    thick,
    title={$(T, \gamma) = (10,0)$}
]
\addplot[mark=o, blue] coordinates {
    (5, 0.876710133995582)
    (10, 0.6670169426284571)
    (20, 0.03933899433040463)
    (30, 0.0029995147644178047)
    (40, 0.0004172769031511792)
    (50, 2.199290078204292e-05)
    (70, 6.619568939593204e-08)
    (100, 2.9838915475070055e-12)
};
\addplot[mark=square*, red!70!black] coordinates {
    (5, 0.7790297762565654)
    (10, 0.30851356200726626)
    (20, 0.010998296647076982)
    (30, 0.00018526436981143663)
    (40, 1.6377618312083584e-05)
    (50, 3.1126670650672986e-07)
    (70, 3.2281940379078534e-11)
    (100, 0.0)
};
\end{axis}

\end{tikzpicture}
}
\scalebox{0.8}{%
\begin{tikzpicture}

\begin{axis}[
    width=0.35\textwidth,
    height=0.3\textwidth,
    xlabel={CG iterations},
    xmin=5, xmax=100,
    xtick={10,30,50,70,100},
    ymode=log,
    ytickten={0,-2,-4,-6,-8,-10,-12,-14},
    grid=both,
    legend pos=north east,
    thick,
    title={$(T, \gamma) = (30,0)$}
]
\addplot[mark=o, blue] coordinates {
    (5, 0.9962550213714911)
    (10, 0.8326168429101802)
    (20, 0.4578159357895489)
    (30, 0.23452719689443186)
    (40, 0.10343343374840686)
    (50, 0.06689452414603396)
    (70, 0.0076117548168608915)
    (100, 0.00014330375462923874)
};
\addplot[mark=square*, red!70!black] coordinates {
    (5, 0.785816371495341)
    (10, 0.5270632972189708)
    (20, 0.1327521546079677)
    (30, 0.013183125305473551)
    (40, 0.003305105880394072)
    (50, 0.0008657475927028164)
    (70, 1.2372771977792146e-05)
    (100, 3.566896868458902e-08)
};
\end{axis}

\end{tikzpicture}
}
\scalebox{0.8}{%
\begin{tikzpicture}

\begin{axis}[
    width=0.35\textwidth,
    height=0.3\textwidth,
    xlabel={CG iterations},
    xmin=5, xmax=100,
    xtick={10,30,50,70,100},
    ymode=log,
    ytickten={0,-2,-4,-6,-8,-10,-12,-14},
    grid=both,
    legend pos=north east,
    thick,
    title={$(T, \gamma) = (50,0)$}
]
\addplot[mark=o, blue] coordinates {
    (5, 0.9994636968963782)
    (10, 0.9436258977443107)
    (20, 0.7992280227444969)
    (30, 0.38156533979093904)
    (40, 0.2844176569387488)
    (50, 0.17761337765600665)
    (70, 0.07941355898770563)
    (100, 0.008674945895127299)
};
\addplot[mark=square*, red!70!black] coordinates {
    (5, 0.7857190473670876)
    (10, 0.565149793231868)
    (20, 0.1495330885707933)
    (30, 0.05780060735553804)
    (40, 0.00992764164163244)
    (50, 0.002017174031099142)
    (70, 9.052921667614845e-05)
    (100, 1.3265053847723928e-06)
};
\end{axis}

\end{tikzpicture}
}
\scalebox{0.8}{%
\begin{tikzpicture}

\begin{axis}[
    width=0.35\textwidth,
    height=0.3\textwidth,
    xlabel={CG iterations},
    ylabel={Relative error},
    xmin=5, xmax=100,
    xtick={10,30,50,70,100},
    ymode=log,
    ytickten={0,-2,-4,-6,-8,-10,-12,-14},
    grid=both,
    legend pos=north east,
    thick,
    title={$(T, \gamma) = (10,-0.5)$}
]
\addplot[mark=o, blue] coordinates {
    (5, 0.9455222511525668)
    (10, 0.8209475368259452)
    (20, 0.13036206729883767)
    (30, 0.01671072386304267)
    (40, 0.007202999730300138)
    (50, 0.0016264052829117978)
    (70, 3.0470790163731014e-05)
    (100, 3.028188824922174e-08)
};
\addplot[mark=square*, red!70!black] coordinates {
    (5, 0.7884306957847413)
    (10, 0.315166960875018)
    (20, 0.011866087138935166)
    (30, 0.00024371343790657792)
    (40, 3.1492411169813396e-05)
    (50, 1.0763369210211925e-06)
    (70, 1.2342611527463634e-11)
    (100, 0.0)
};
\end{axis}

\end{tikzpicture}
}
\scalebox{0.8}{%
\begin{tikzpicture}

\begin{axis}[
    width=0.35\textwidth,
    height=0.3\textwidth,
    xlabel={CG iterations},
    xmin=5, xmax=100,
    xtick={10,30,50,70,100},
    ymode=log,
    ytickten={0,-2,-4,-6,-8,-10,-12,-14},
    grid=both,
    legend pos=north east,
    thick,
    title={$(T, \gamma) = (30,-0.5)$}
]
\addplot[mark=o, blue] coordinates {
    (5, 0.9964334488619728)
    (10, 0.9795468994142232)
    (20, 0.8282204976207466)
    (30, 0.8190738591961864)
    (40, 0.7050138725922295)
    (50, 0.39294989537316755)
    (70, 0.3327790684391965)
    (100, 0.11823589752720771)
};
\addplot[mark=square*, red!70!black] coordinates {
    (5, 0.7949054049347161)
    (10, 0.5370222917662403)
    (20, 0.13843220293316724)
    (30, 0.014100116922380137)
    (40, 0.00377682369822198)
    (50, 0.001002497703291598)
    (70, 1.8563734138905803e-05)
    (100, 4.094562835887798e-08)
};
\end{axis}

\end{tikzpicture}
}    
\scalebox{0.8}{%
\begin{tikzpicture}

\begin{axis}[
    width=0.35\textwidth,
    height=0.3\textwidth,
    xlabel={CG iterations},
    xmin=5, xmax=100,
    xtick={10,30,50,70,100},
    ymode=log,
    ytickten={0,-2,-4,-6,-8,-10,-12,-14},
    grid=both,
    legend pos=north east,
    thick,
    title={$(T, \gamma) = (50,-0.5)$}
]
\addplot[mark=o, blue] coordinates {
    (5, 0.9995713035120756)
    (10, 0.9995712929177519)
    (20, 0.9953905894953937)
    (30, 0.995390592951449)
    (40, 0.9733121069420667)
    (50, 0.9733021753363749)
    (70, 0.9295989094586826)
    (100, 0.8984810405129386)
};
\addplot[mark=square*, red!70!black] coordinates {
    (5, 0.7948814185810744)
    (10, 0.5754407416104831)
    (20, 0.1551520811269918)
    (30, 0.062324726413066904)
    (40, 0.010565865682195941)
    (50, 0.0023274306220867416)
    (70, 0.00010917466373715135)
    (100, 1.3956089051444946e-05)
};
\end{axis}

\end{tikzpicture}
} 
    \caption{PPCG method for parabolic problem. Left to right: increasing time horizon $T$. Top to bottom: Exponentially stable, stable and unstable uncontrolled equation.}
    \label{fig:ppcg:parabolic}
\end{figure}

\medskip

\textbf{Optimization of the condition number \eqref{eq:condnumber}}.
We consider the heat equation with Neumann boundary conditions and with $\calC=\calB = I$. When choosing the feedback $\calK = -\delta I$ for $\delta > 0$ and by a similar argumentation as for the finite-dimensional case proven in Proposition~\ref{prop:ode} replacing the matrix exponential by the operator semigroup, one can derive the solution operator norm 
\begin{small}
  $$      \sigma_\calK \lesssim \frac{e^{-\delta T}-1}{-\delta} + 1.$$  
\end{small}
 If $\delta \to 0$ then the first term approaches $T$ (which may be seen using L'Hôpital's rule) that is typical for stable (but not asymptotically stable) systems due to integration of a constant function over $[0,T]$; a similar bound will be observed for the wave equation in the next subsection. For any $\delta > 0$, however, this term is bounded uniformly in $T$ and for $\delta \to \infty$ it approaches one. Again, we may plug this relation into \eqref{eq:condnumber} and optimize over $\delta$. This is illustrated in Figure~\ref{fig:condopt_p}.

    \begin{figure}[htb]
    \centering
	\includegraphics[width=.8\linewidth]{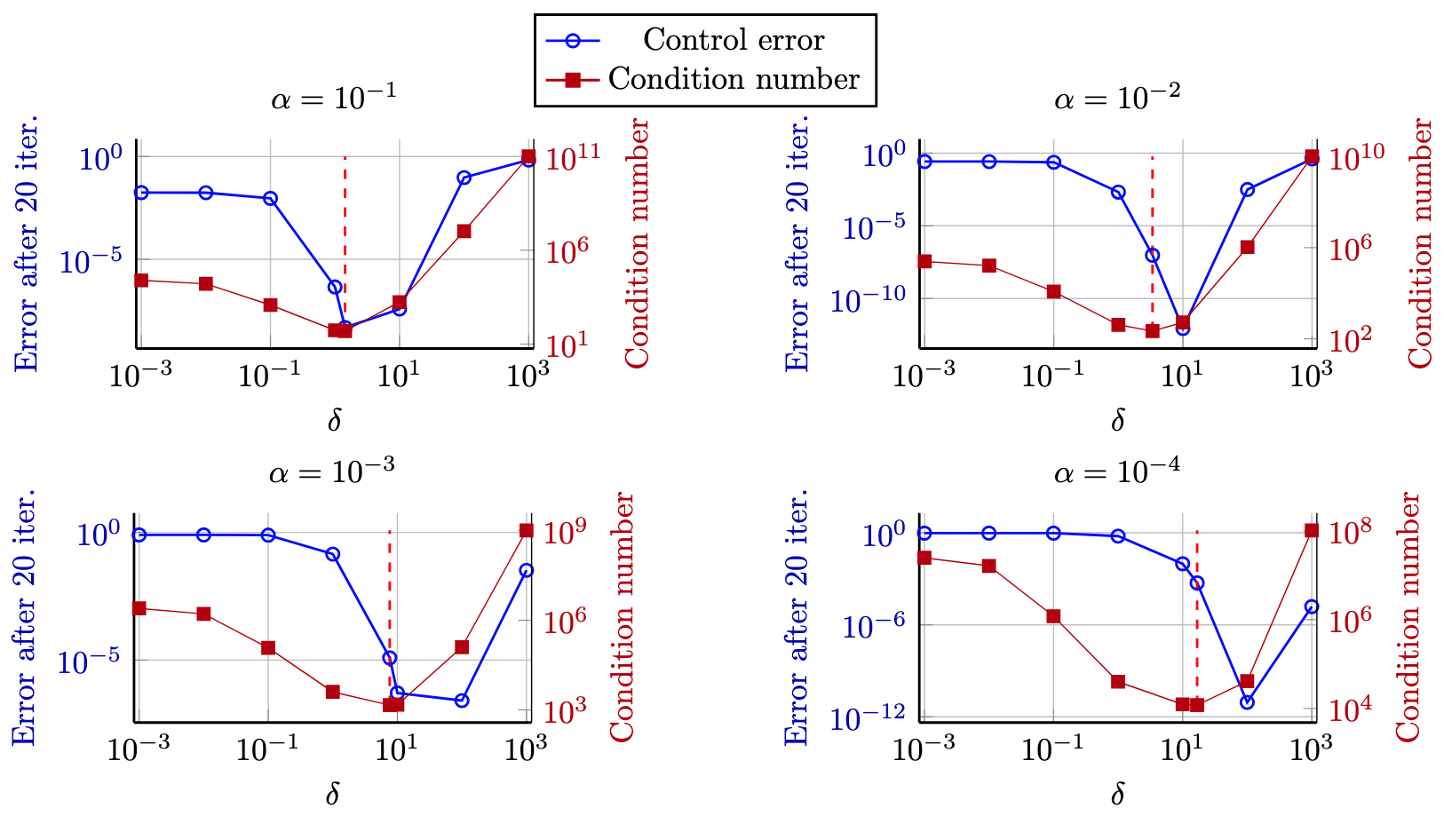}
    \caption{Error after a fixed number of iterations and condition number.}
    \label{fig:condopt_p}
\end{figure}

\subsection{Hyperbolic PDEs}\label{subsec:hyperbolic}
Let $\Omega \subset \R^n$ be a bounded open domain. Consider the controlled wave equation with homogeneous Dirichlet boundary conditions and prescribed initial data
\begin{equation}\label{eq:wave}
\begin{array}{rcll}  
     x_{tt}(t,\omega) - \Delta x(t,\omega)  &=& u(t, \omega) & \qquad \text{on } [0,T] \times  \Omega \\
     x(t,\omega) &=&0 & \qquad \text{on } [0,T] \times  \partial \Omega \\
     x(0,\omega) &=&x_0(\omega) & \qquad \text{on } \Omega \\
     x_t(0,\omega) &=&x_1(\omega) & \qquad \text{on } \Omega .
\end{array}
\end{equation}
Define the operator $A: L^2(\Omega) \supset H^2(\Omega) \cap H_0^1(\Omega) \to L^2(\Omega)$ with $A x = - \Delta x $.
Note that $A$ is self-adjoint and uniformly positive and there exists an operator square root $A^{1/2} : L^2(\Omega) \supset H_0^1(\Omega) \to L^2(\Omega)$ where $H_0^1(\Omega)$ is endowed with the \textit{energy inner product} $ \langle \cdot,\cdot \rangle_{H_0^1(\Omega)} =\langle A^{1/2} \cdot,A^{1/2}\cdot \rangle_{L^2(\Omega)}$. Let $H^{-1}(\Omega)$ be the dual space of $H_0^1(\Omega)$ with respect to the pivot space $L^2(\Omega)$. Precisely, $(H_0^1(\Omega), L^2(\Omega), H^{-1}(\Omega))$ defines a Gelfand triple and there exists a unique bounded extension of $A$ from $H_0^1(\Omega)$ to $H^{-1}(\Omega)$ that coincides with the Riesz isomorphism $\calR_{H_0^1(\Omega)}$, again denoted by $A: H_0^1(\Omega) \to H^{-1}(\Omega)$. For a Banach space $Z$, denote by $D'(0,T;Z)$ the space of $Z$-valued distributions on $(0,T)$, i.e., $D'(0,T;Z)\coloneqq L(C_c^\infty(0,T),Z)$. Since every continuous $H_0^1(\Omega)$-valued function defines an
$H_0^1(\Omega)$-valued distribution, the \textit{second distributional time derivative} $\ddot x\in D'(0,T;H_0^1(\Omega))$
is well-defined for every $x\in C([0,T];H_0^1(\Omega))$. Due to the continuous embedding $H_0^1(\Omega)\hookrightarrow H^{-1}(\Omega)$, we may equally regard $\ddot x\in D'(0,T;H^{-1}(\Omega)).$
Because $A\in L(H_0^1(\Omega),H^{-1}(\Omega))$, the pointwise mapping $t\mapsto Ax(t)$ belongs to $ C([0,T];H^{-1}(\Omega))\subset D'(0,T;H^{-1}(\Omega))$.
Consequently,
\begin{equation*}
    \tfrac{\mathrm d^2}{\mathrm dt^2}+A:
    C([0,T];H_0^1(\Omega)) 
    \rightarrow
    D'(0,T;H^{-1}(\Omega))
\end{equation*}
is well-defined.
Define the space and norm
\begin{align*}
    E(0,T;A) &\coloneqq \left\lbrace x \in C([0,T];H_0^1(\Omega)) \cap C^1([0,T];L^2(\Omega)) : \ddot x + A x \in L^2(0,T;L^2(\Omega)) \right\rbrace\\
     \| x \|_{E(0,T;A)} &\coloneqq  \| x \|_{C([0,T];H_0^1(\Omega))} + \| \dot x \|_{C([0,T];L^2(\Omega))} + \| \ddot x + A x\|_{L^2(0,T;L^2(\Omega))}.
\end{align*}
which forms a Banach space. Indeed, completeness follows by identifying $E(0,T;A)$ with the corresponding first-order energy space associated with the abstract wave operator $\begin{smallbmatrix}
    0 & I \\ -A & 0
\end{smallbmatrix}$.
Let $\calP \coloneqq L^2(0,T;L^2(\Omega)) \times H_0^1(\Omega) \times
L^2(\Omega)$ and define $\calA: E(0,T;A) \to \calP$ and $\calB: L^2(0,T;L^2(\Omega)) \to \calP$ by
\begin{equation}
    \calA x = \begin{smallpmatrix}
        \ddot x + A x \\ x(0) \\ \dot x(0)
    \end{smallpmatrix}, \qquad \calB u = \begin{smallpmatrix}
        u \vphantom{A} \\ 0 \vphantom{(0)} \\  0 \vphantom{(0)} 
    \end{smallpmatrix}.
\end{equation}
Introducing the first-order state variable $(x,v) \coloneqq (x, \dot x)$ (\textit{displacement-velocity} formulation) transforms the second-order equation into an abstract Cauchy problem generated by the associated wave operator. Consequently, $\calA$ is an isomorphism, see e.g.~\cite[Thm.~5.1.10]{curtain2020introduction}. We rewrite the hyperbolic control problem \eqref{eq:wave}
by 
\begin{equation}\label{eq:abstract_wave}
    \calA x - \calB u = \begin{smallbmatrix}
        0 \\ x_0 \\ x_1
    \end{smallbmatrix}.
\end{equation}
 For the cost functional, we introduce position and velocity observation $$\calC=\begin{smallbmatrix}
    I \\ \frac{\mathrm{d}}{\mathrm{d}t}
\end{smallbmatrix}: E(0,T;A) \to L^2(0,T;H_0^1(\Omega) \times L^2(\Omega)).$$

The following result shows a solution operator bound for the uncontrolled problem. In particular, we observe a dependence on the time horizon $T$ which (in constrast to the previously discussed parabolic case) is not exponential but linear; this is due to the energy conservation property of the wave equation and essentially stems from integrating a constant norm (energy) over the time horizon $T$. The proof is presented in Appendix \ref{subsec:app2}.
\begin{theorem}\label{thm:stab_hyperb}
    The operator $\calA$ is bounded, boundedly invertible and 
    \begin{equation}\label{eq:nonunif_bound_hyperb}
            \sigma_0=\|  \calC \blk \calA^{-1} \|_{\calP \to L^2(0,T;H_0^1(\Omega) \times L^2(\Omega))} \leq \max (\sqrt{2T}, T).
    \end{equation}
\end{theorem}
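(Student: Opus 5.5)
We propose to reduce everything to the first-order energy formulation and the unitary group it generates. Writing $z=(x,\dot x)$, the equation $\calA x=(g,x_0,x_1)$ becomes $\dot z=\mathbb{A}z+(0,g)$, $z(0)=(x_0,x_1)$, where $\mathbb{A}=\begin{smallbmatrix}0&I\\-A&0\end{smallbmatrix}$ acts on the energy space $\mathcal{H}:=H_0^1(\Omega)\times L^2(\Omega)$. The norm of $\mathcal{H}$ is $\|(x,v)\|^2_{\mathcal H}=\|A^{1/2}x\|^2_{L^2}+\|v\|^2_{L^2}$.

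\textbf{Step 1: unitarity.} Since $A$ is self-adjoint and uniformly positive, $\mathbb{A}$ is skew-adjoint on $\mathcal{H}$. By Stone's theorem it generates a unitary $C_0$-group $(\mathbb{T}(t))_{t\in\R}$, so $\|\mathbb{T}(t)\|_{\mathcal H\to\mathcal H}=1$. This is the infinite-dimensional analogue of the skew-symmetric remark after \eqref{eq:Gronwall}, with $M=1$ and $\omega=0$. Bounded invertibility of $\calA$ is then the cited \cite[Thm.~5.1.10]{curtain2020introduction}: the mild solution $z(t)=\mathbb{T}(t)z_0+\int_0^t\mathbb{T}(t-s)(0,g(s))\,\mathrm ds$ lies in $C([0,T];\mathcal H)$, so $x\in E(0,T;A)$ with $\ddot x+Ax=g$. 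Boundedness of $\calA$ is immediate from the definition of the $E$-norm.

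\textbf{Step 2: pointwise energy bound.} Since $\calC x=(x,\dot x)=z$, we have $\|\calC\calA^{-1}(g,x_0,x_1)\|^2=\int_0^T\|z(t)\|^2_{\mathcal H}\,\mathrm dt$. By unitarity and Cauchy--Schwarz,
\begin{equation*}
\|z(t)\|_{\mathcal H}\le \|z_0\|_{\mathcal H}+\int_0^t\|g(s)\|_{L^2}\,\mathrm ds\le \|z_0\|_{\mathcal H}+\sqrt t\,\|g\|_{L^2(0,T;L^2)}.
\end{equation*}
Alternatively, one can test with $\dot x$ and use the energy identity $\frac{\mathrm d}{\mathrm dt}\tfrac12\|z\|^2_{\mathcal H}=\langle g,\dot x\rangle$. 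This gives $\|z(t)\|_{\mathcal H}\le\|z_0\|_{\mathcal H}+\int_0^t\|g\|$ directly, and the same computation justifies Step 1 for strong solutions before passing to the limit by density.

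\textbf{Step 3: integrate in time.} Squaring with $(a+b)^2\le 2a^2+2b^2$ gives $\|z(t)\|^2_{\mathcal H}\le 2\|z_0\|^2_{\mathcal H}+2t\|g\|^2$. Integrating over $[0,T]$ yields
\begin{equation*}
\int_0^T\|z\|^2_{\mathcal H}\le 2T\|z_0\|^2_{\mathcal H}+T^2\|g\|^2\le \max(2T,T^2)\,\|(g,x_0,x_1)\|^2_{\calP}.
\end{equation*}
Taking square roots gives \eqref{eq:nonunif_bound_hyperb}.

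\textbf{Main obstacle.} The delicate point is bookkeeping rather than ideas. One must confirm that the norms on $\calP$ and on the observation space $L^2(0,T;H_0^1\times L^2)$ are the energy norms, with $H_0^1$ carrying $\|A^{1/2}\cdot\|$, so that unitarity gives the constant exactly $1$. One must also check that the crude splitting reproduces precisely $\max(\sqrt{2T},T)$ and not something slightly different.
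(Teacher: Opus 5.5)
Your proposal is correct and follows the paper's proof essentially step for step. The paper likewise uses that the skew-adjoint first-order generator (the case $D=0$) on $H_0^1(\Omega)\times L^2(\Omega)$ with the energy norm generates a unitary group. It then bounds the Duhamel term by Cauchy--Schwarz and applies $(a+b)^2\le 2a^2+2b^2$, reaching $2T\|(x_0,x_1)\|^2+T^2\|f\|^2_{L^2(0,T;L^2(\Omega))}\le\max(2T,T^2)\,\|(f,x_0,x_1)\|_{\calP}^2$, and it cites the literature for identifying the mild solution with the solution in $E(0,T;A)$, just as you do.
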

The previous theorem shows that $\calC\calA^{-1}$ is not uniformly bounded in the time horizon. We now show how to suitably design a feedback stabilization $\tcalA = \calA - \calB \calK$ such that $\calC\tcalA^{-1}$ is uniformly bounded in the time horizon. As the observation includes both position and velocity, in view of  Assumption~\ref{ass:feedback}\ref{ass:smallness} we may use the feedback operator
\begin{equation*}
   \calK : E(0,T;A) \to  L^2(0,T;L^2(\Omega)), \qquad \calK x = -(x + \dot x)
\end{equation*}
which induces linear friction in the model. The resulting closed loop system reads  $\ddot x + \dot x - \Delta x + x =0$ and in \textit{displacement-velocity} formulation,
\begin{equation*}
    \begin{bmatrix}
    \dot x \\ \dot v
\end{bmatrix} = \begin{bmatrix}
        0 & 1 \\ \Delta -1& - 1
    \end{bmatrix} \begin{bmatrix}
    x \\v
\end{bmatrix}.
\end{equation*}
The next result shows that using this feedback operator, the transformed system is exponentially stable and in particular, the feedbacked constraint operator is uniformly boundedly invertible in the time horizon. The proof is presented in Appendix \ref{subsec:app2}.
\begin{theorem}\label{thm:exp_stab_hyperb}
   The operator $ \widetilde \calA = \calA - \calB \calK$
fulfills
 \begin{equation}\label{eq:unif_bound_hyperb}
        \sigma_\calK=\| \calC \blk \tcalA^{-1} \|_{\calP \to L^2(0,T;H_0^1(\Omega) \times L^2(\Omega))} \leq C%
    \end{equation}
for a constant $C>0$ independent of the time-horizon $T$.
\end{theorem}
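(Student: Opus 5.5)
We want a bound on $\calC\tcalA^{-1}$, i.e. on the position and velocity of the closed-loop system, uniformly in $T$. The closed loop is $\ddot x+\dot x+Ax+x=g$, $x(0)=x_0$, $\dot x(0)=x_1$, with $g\in L^2(0,T;L^2(\Omega))$. The plan is to pass to the displacement--velocity form, show that the closed-loop generator is exponentially stable, and then argue exactly as in Proposition~\ref{prop:ode}, with the matrix exponential replaced by a semigroup.

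\textbf{Step 1 (first-order form).} On the energy space $\mathcal H:=H_0^1(\Omega)\times L^2(\Omega)$ set
\[
\mathbb A_\calK=\begin{smallbmatrix}0&I\\-(A+I)&-I\end{smallbmatrix},\qquad \dom{\mathbb A_\calK}=(H^2(\Omega)\cap H_0^1(\Omega))\times H_0^1(\Omega).
\]
By \cite[Thm.~5.1.10]{curtain2020introduction}, as used above for $\calA$, this operator generates a $C_0$-semigroup $S(t)$. Moreover $\tcalA x=(g,x_0,x_1)$ holds if and only if $z=(x,\dot x)$ is the mild solution
\[
z(t)=S(t)(x_0,x_1)+\int_0^tS(t-s)(0,g(s))\,\mathrm ds .
\]
Because $\|z\|_{L^2(0,T;\mathcal H)}$ is precisely $\|\calC x\|$, it is enough to bound this expression.

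\textbf{Step 2 (exponential stability, the main obstacle).} We need $\|S(t)\|_{\mathcal H\to\mathcal H}\le Me^{-\omega t}$ with $M,\omega>0$ independent of $T$. Damping by itself gives only dissipation, $\tfrac{\mathrm d}{\mathrm dt}E=-\|\dot x\|^2$, and so to obtain decay I would use a perturbed Lyapunov functional. Let
\[
E_\varepsilon(t)=\tfrac12\|A^{1/2}x\|^2+\tfrac12\|x\|^2+\tfrac12\|\dot x\|^2+\varepsilon\langle x,\dot x\rangle_{L^2}.
\]
Differentiating along smooth solutions with $g=0$ gives
\[
\tfrac{\mathrm d}{\mathrm dt}E_\varepsilon=-\|\dot x\|^2+\varepsilon\|\dot x\|^2-\varepsilon\bigl(\|A^{1/2}x\|^2+\|x\|^2\bigr)-\varepsilon\langle x,\dot x\rangle.
\]
Apply Young's inequality to the last term and use that $A$ is uniformly positive (Poincaré). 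For small $\varepsilon$, for instance $\varepsilon=1/4$, this yields $\tfrac{\mathrm d}{\mathrm dt}E_\varepsilon\le-c\,E_\varepsilon$. Since $E_\varepsilon$ is equivalent to $\|z\|_{\mathcal H}^2$ with constants independent of $T$, we get the semigroup bound. A density argument, using $\dom{\mathbb A_\calK}$, extends it to mild solutions. Since the operator is normal-like, one could alternatively diagonalize $A$ in its eigenbasis: each mode satisfies $\ddot x_k+\dot x_k+(\lambda_k+1)x_k=0$ with roots of real part $-1/2$, but uniformity in $k$ of the constant $M$ would then have to be checked by hand. I therefore prefer the Lyapunov route.

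\textbf{Step 3 (solution operator bound).} Combine the semigroup bound with the variation of constants formula. For the initial data,
\[
\|S(\cdot)(x_0,x_1)\|_{L^2(0,T;\mathcal H)}\le M(2\omega)^{-1/2}\|(x_0,x_1)\|_{\mathcal H}.
\]
For the convolution term, Young's convolution inequality with the kernel $Me^{-\omega t}\in L^1(0,\infty)$ gives
\[
\Bigl\|\int_0^\cdot S(\cdot-s)(0,g(s))\,\mathrm ds\Bigr\|_{L^2(0,T;\mathcal H)}\le \tfrac M\omega\|g\|_{L^2(0,T;L^2(\Omega))}.
\]
Both constants are independent of $T$, which proves \eqref{eq:unif_bound_hyperb}. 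Invertibility of $\tcalA$ and membership of the solution in $E(0,T;A)$ follow as for $\calA$, because $\calB\calK$ is a bounded perturbation: $-(x+\dot x)$ is bounded from $E(0,T;A)$ into $L^2(0,T;L^2(\Omega))$.
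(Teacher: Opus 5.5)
Your proof is correct and follows the same route as the paper's (Appendix~\ref{subsec:app2}). Both pass to the first-order displacement--velocity form, establish exponential stability $\|S(t)\|\le Me^{-\omega t}$ of the closed-loop semigroup, and apply a variation-of-constants estimate with the $L^1(0,\infty)$ kernel $Me^{-\omega t}$; your Young convolution inequality is exactly what the paper's weighted Cauchy--Schwarz plus Fubini computation proves. The differences are minor: you prove exponential stability yourself via the perturbed energy $E_\varepsilon$ where the paper cites the literature, and your generator includes the full $-(A+I)$ term coming from $\calK x=-(x+\dot x)$, whereas the paper's appendix works with $G$ for $D=I$ and leaves that shift implicit.
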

We illustrate the results for the gradient method in Figure~\ref{fig:grad:hyperbolic}. As the feedbacked system has a condition number bounded uniformly in the time horizon (due to Theorem~\ref{thm:exp_stab_hyperb}) and the original system has a bound depending on $T$ (see Theorem~\ref{thm:stab_hyperb}), we observe that when increasing the time horizon (left to right), the gradient method applied to the feedbacked problem more and more outperforms the gradient method for to the original problem.
\blk

\begin{figure}[htb]
    \centering
\scalebox{0.8}{%
\begin{tikzpicture}

\begin{axis}[
    width=0.35\textwidth,
    height=0.3\textwidth,
    xlabel={CG iterations},
    ylabel={Relative error},
    xmin=10, xmax=50,
    xtick={10,20,50,100,200},
    ymode=log,
    ytickten={0,-2,-4,-6,-8,-10,-12,-14},
    grid=both,
    legend pos=north east,
    thick,
    title={$(T,\alpha) = (10,0.001)$}
]
\addplot[mark=o, blue] coordinates {
    (10, 0.01751287649601164)
    (20, 9.477111129658123e-06)
        (50, 9.477111129658123e-14)
};
\addlegendentry{Original}
\addplot[mark=square*, red!70!black] coordinates {
    (10, 0.007672586577570736)
    (20, 2.907551033375415e-06)
        (50, 9.477111129658123e-14)
};
\addlegendentry{Feedbacked}
\end{axis}
\end{tikzpicture}
}
\scalebox{0.8}{%
\begin{tikzpicture}

\begin{axis}[
    width=0.35\textwidth,
    height=0.3\textwidth,
    xlabel={CG iterations},
    xmin=10, xmax=100,
    xtick={10,50,100,200},
    ymode=log,
    ytickten={0,-2,-4,-6,-8,-10,-12,-14},
    grid=both,
    legend pos=north east,
    thick,
    title={$(T,\alpha) = (30,0.001)$}
]
\addplot[mark=o, blue] coordinates {
    (10, 0.19179225491192184)
    (20, 0.006335766393586188)
    (50, 2.1462752462181757e-06)
    (100, 8.926437223200016e-13)
};
\addplot[mark=square*, red!70!black] coordinates {
    (10, 0.05941597848530918)
    (20, 0.0012641829982221638)
    (50, 2.673853185954666e-08)
    (100, 2.0609154292194055e-13)
};
\end{axis}

\end{tikzpicture}
}
\scalebox{0.8}{%
\begin{tikzpicture}

\begin{axis}[
    width=0.35\textwidth,
    height=0.3\textwidth,
    xlabel={CG iterations},
    xmin=10, xmax=100,
    xtick={10,50,100,200},
    ymode=log,
    ytickten={0,-2,-4,-6,-8,-10,-12,-14},
    grid=both,
    legend pos=north east,
    thick,
    title={$(T,\alpha) = (50,0.001)$}
]
\addplot[mark=o, blue] coordinates {
    (10, 0.41594123144699396)
    (20, 0.051259503201814195)
    (50, 0.00031299367406420757)
    (100, 2.286858942586622e-08)
};
\addplot[mark=square*, red!70!black] coordinates {
    (10, 0.09815129837154021)
    (20, 0.008146108610215712)
    (50, 1.6634035444967888e-06)
    (100, 2.0285751383548296e-12)
};
\end{axis}

\end{tikzpicture}
}\\
\scalebox{0.8}{%
\begin{tikzpicture}

\begin{axis}[
    width=0.35\textwidth,
    height=0.3\textwidth,
    xlabel={CG iterations},
    ylabel={Relative error},
    xmin=10, xmax=200,
    xtick={10,50,100,200},
    ymode=log,
    ytickten={0,-2,-4,-6,-8,-10,-12,-14},
    grid=both,
    legend pos=north east,
    thick,
    title={$(T,\alpha) = (10,10^{-5})$}
]
\addplot[mark=o, blue] coordinates {
    (10, 0.6606344295840225)
    (20, 0.23851233397327157)
    (50, 0.01505220619328308)
    (100, 8.088854116884556e-05)
    (200, 4.616527754370169e-09)
};
\addplot[mark=square*, red!70!black] coordinates {
    (10, 0.5779625783515272)
    (20, 0.20750711355316964)
    (50, 0.0055247715401029185)
    (100, 2.202974583132827e-05)
    (200, 4.119937050485917e-10)
};
\end{axis}

\end{tikzpicture}
}
\scalebox{0.8}{%
\begin{tikzpicture}

\begin{axis}[
    width=0.35\textwidth,
    height=0.3\textwidth,
    xlabel={CG iterations},
    xmin=10, xmax=200,
    xtick={10,50,100,200},
    ymode=log,
    ytickten={0,-2,-4,-6,-8,-10,-12,-14},
    grid=both,
    legend pos=north east,
    thick,
    title={$(T,\alpha) = (30,10^{-5})$}
]
\addplot[mark=o, blue] coordinates {
    (10, 0.8708962902086175)
    (20, 0.5484386136728564)
    (50, 0.19522635224976417)
    (100, 0.03318957645331909)
    (200, 0.0011411630179903466)
};
\addplot[mark=square*, red!70!black] coordinates {
    (10, 0.7594706224715455)
    (20, 0.43285882606779275)
    (50, 0.11930082376904741)
    (100, 0.01223770875237264)
    (200, 0.00010703739753784479)
};
\end{axis}

\end{tikzpicture}
}
\scalebox{0.8}{%
\begin{tikzpicture}

\begin{axis}[
    width=0.35\textwidth,
    height=0.3\textwidth,
    xlabel={CG iterations},
    xmin=10, xmax=200,
    xtick={10,50,100,200},
    ymode=log,
    ytickten={0,-2,-4,-6,-8,-10,-12,-14},
    grid=both,
    legend pos=north east,
    thick,
    title={$(T,\alpha) = (50,10^{-5})$}
]
\addplot[mark=o, blue] coordinates {
    (10, 0.9358957976889742)
    (20, 0.7450075952725995)
    (50, 0.39316701989374675)
    (100, 0.11787877169081454)
    (200, 0.016509783845688457)
};
\addplot[mark=square*, red!70!black] coordinates {
    (10, 0.8086783704646525)
    (20, 0.5874916599975385)
    (50, 0.1898510101126374)
    (100, 0.037881820964390245)
    (200, 0.001293915246537536)
};
\end{axis}

\end{tikzpicture}
}
\caption{Gradient method for hyperbolic problem: Relative error over CG iterations for varying regularization parameters and reaction terms.}
\label{fig:grad:hyperbolic}
\end{figure}

The results for the PPCG method are shown in Figure~\ref{fig:ppcg:wave} for $\alpha = 10^{-3}$. Here, we observe a similar behavior, that is, while both formulations perform equally well for small time horizons, the difference becomes more and more significant as we increase the time horizon.

\begin{figure}[htb]
    \centering
\scalebox{0.8}{%
\begin{tikzpicture}

\begin{axis}[
    width=0.35\textwidth,
    height=0.3\textwidth,
    xlabel={CG iterations},
    ylabel={Relative error},
    xmin=5, xmax=70,
    xtick={10,30,50,70,100},
    ymode=log,
    ytickten={0,-2,-4,-6,-8,-10,-12,-14},
    grid=both,
    legend pos=north east,
    thick,
    title={$T=10$}
]
\addplot[mark=o, blue] coordinates {
    (5, 0.471144196408362)
    (10, 0.0674079136723612)
    (20, 0.0007844559716823962)
    (30, 1.0657345867403937e-05)
    (40, 8.833919642288362e-08)
    (50, 1.0135818484640496e-09)
    (70, 6.837988875367045e-14)
    (100, 4.117911027335265e-14)
};
\addlegendentry{Original}
\addplot[mark=square*, red!70!black] coordinates {
    (5, 0.31918309023507446)
    (10, 0.04496632741904206)
    (20, 0.0003720763798118311)
    (30, 2.0794734388447563e-06)
    (40, 3.9242258607250266e-08)
    (50, 2.5904457193835756e-10)
    (70, 5.215714583056894e-14)
    (100, 4.109682618784825e-14)
};
\addlegendentry{Feedbacked}
\end{axis}

\end{tikzpicture}
}
\scalebox{0.8}{%
\begin{tikzpicture}

\begin{axis}[
    width=0.35\textwidth,
    height=0.3\textwidth,
    xlabel={CG iterations},
    xmin=5, xmax=100,
    xtick={10,30,50,70,100},
    ymode=log,
    ytickten={0,-2,-4,-6,-8,-10,-12,-14},
    grid=both,
    legend pos=north east,
    thick,
    title={$T=20$}
]
\addplot[mark=o, blue] coordinates {
    (5, 0.5729797569001548)
    (10, 0.2942738888196742)
    (20, 0.015119497810332857)
    (30, 0.00189936649609382)
    (40, 0.00016559600131420244)
    (50, 1.866315976621584e-05)
    (70, 5.481703290313505e-08)
    (100, 2.1180580793043703e-11)
};
\addplot[mark=square*, red!70!black] coordinates {
    (5, 0.34834914939732714)
    (10, 0.05915762157442206)
    (20, 0.003322940426067059)
    (30, 0.00014985436641517657)
    (40, 7.112748374156141e-06)
    (50, 6.392093232764277e-07)
    (70, 6.644103794245564e-10)
    (100, 9.005387159202798e-14)
};
\end{axis}

\end{tikzpicture}
}
\scalebox{0.8}{%
\begin{tikzpicture}

\begin{axis}[
    width=0.35\textwidth,
    height=0.3\textwidth,
    xlabel={CG iterations},
    xmin=5, xmax=100,
    xtick={10,30,50,70,100},
    ymode=log,
    ytickten={0,-2,-4,-6,-8,-10,-12,-14},
    grid=both,
    legend pos=north east,
    thick,
    title={$T=30$}
]
\addplot[mark=o, blue] coordinates {
    (5, 0.644608592055743)
    (10, 0.4403882699217282)
    (20, 0.04174084334918629)
    (30, 0.010825459629634578)
    (40, 0.002674450508490017)
    (50, 0.0005727877627981199)
    (70, 1.174563850758333e-05)
    (100, 6.216174042320608e-08)
};
\addplot[mark=square*, red!70!black] coordinates {
    (5, 0.38243627621926385)
    (10, 0.06616266506954513)
    (20, 0.0072967898516763975)
    (30, 0.0005148502550198909)
    (40, 4.079532805108566e-05)
    (50, 3.787640651565663e-06)
    (70, 2.0232500149781223e-08)
    (100, 1.4214044542977093e-11)
};
\end{axis}

\end{tikzpicture}
}
\scalebox{0.8}{%
\begin{tikzpicture}

\begin{axis}[
    width=0.35\textwidth,
    height=0.3\textwidth,
    xlabel={CG iterations},
    ylabel={Relative error},
    xmin=5, xmax=100,
    xtick={10,30,50,70,100},
    ymode=log,
    ytickten={0,-2,-4,-6,-8,-10,-12,-14},
    grid=both,
    legend pos=north east,
    thick,
    title={$T = 40$}
]
\addplot[mark=o, blue] coordinates {
    (5, 0.7233214322811498)
    (10, 0.46343415405795013)
    (20, 0.11359970835551332)
    (30, 0.021643844094331417)
    (40, 0.008460147300783163)
    (50, 0.0026601395416901395)
    (70, 0.00016416335573618586)
    (100, 2.3862182700344523e-06)
};
\addplot[mark=square*, red!70!black] coordinates {
    (5, 0.39319084407354893)
    (10, 0.0789630512648929)
    (20, 0.009069307755050389)
    (30, 0.0010619157247466439)
    (40, 8.111755649120188e-05)
    (50, 1.3248166691609693e-05)
    (70, 1.5636280758097447e-07)
    (100, 2.22646260037352e-10)
};
\end{axis}

\end{tikzpicture}
}
\scalebox{0.8}{%
\begin{tikzpicture}

\begin{axis}[
    width=0.35\textwidth,
    height=0.3\textwidth,
    xlabel={CG iterations},
    xmin=5, xmax=100,
    xtick={10,30,50,70,100},
    ymode=log,
    ytickten={0,-2,-4,-6,-8,-10,-12,-14},
    grid=both,
    legend pos=north east,
    thick,
    title={$T=50$}
]
\addplot[mark=o, blue] coordinates {
    (5, 0.6948124208693326)
    (10, 0.46266920379974535)
    (20, 0.23795308959141517)
    (30, 0.02912696681086819)
    (40, 0.014429448303085604)
    (50, 0.0053772673676302845)
    (70, 0.0006680314243765)
    (100, 1.939757326766026e-05)
};
\addplot[mark=square*, red!70!black] coordinates {
    (5, 0.3965342315572299)
    (10, 0.08048055639922759)
    (20, 0.009069689010035933)
    (30, 0.0014628224177524172)
    (40, 0.0001447487151987071)
    (50, 2.2687018771083166e-05)
    (70, 4.1129797404699084e-07)
    (100, 8.738023919440107e-10)
};
\end{axis}

\end{tikzpicture}
}
\caption{PPCG method for hyperbolic problem: Iterations for varying time horizons and regularization parameters.}
\label{fig:ppcg:wave}
\end{figure}

\bibliographystyle{plain}
\bibliography{references}
\appendix 
\begin{small}
\section{Supplementary proofs}
\subsection{Proof of \eqref{eq:toeplitz}}\label{subsec:toeplitz}
We first consider the case $a\neq 1$. To obtain a lower bound, we apply the Toeplitz matrix $T_N$ to the first canonical basis vector, which gives the lower bound
$\|T_N\|_2 \geq \|T_N e_1\|_2 = \left(\sum_{k=0}^{N-1} a^{2k}\right)^{1/2}$.
From $\|T_N\|_2 \leq \sqrt{\|T_N\|_1\|T_N\|_\infty}$ and
$\|T_N\|_1 = \|T_N\|_\infty = \frac{1-a^N}{1-a}$ we get an upper bound for the spectral norm. Putting both together for $a\neq 1$ yields  $\sqrt{\tfrac{1-a^{2N}}{1-a^2}}
    \;\leq\; \|T_N\|_2 \;\leq\;
    \tfrac{1-a^N}{1-a}$ that is, the claim.
We now consider the case \(a=1\). For the upper bound, we may estimate $\|T_N\|_2 \leq \|T_N\|_F = \sqrt{N^2} = N$.
For the lower bound, we choose $
\mathbf 1:=(1,\ldots,1)^\top\in\mathbb R^N$ for which we have $
T_N\mathbf 1
=
(1,2,\ldots,N)^\top,
$
leads to
$
\|T_N\|_2
\ge
\tfrac{\|T_N\mathbf 1\|_2}{\|\mathbf 1\|_2}
=
\left(
\tfrac{\sum_{k=1}^N k^2}{N}
\right)^{1/2}
=
\left(
\tfrac{(N+1)(2N+1)}{6}
\right)^{1/2}.$
As a result $\|T_N\|_2 \ge \frac{N}{\sqrt3}$ which together with the upper bound derived above yields the result.%
\subsection{Proof of Proposition~\ref{prop:ode}}\label{subsec:app1}
    Let $\gamma_0 : H^1(0,T; \R^n) \to \R^n$ be the boundary evaluation map, i.e., $\gamma_0(x)  = x(0)$. It is well-known that $\gamma_0$ is a bounded operator, see \cite[Thm 7.8.1]{aubin2000applied}. Hence, it is enough to prove boundedness of $ \tfrac{\mathrm{d}}{\mathrm{d}t}-A:  H^1(0,T; \R^n) \to L^2(0,T; \R^n)$ which is immediate from
    \begin{equation*}
        \|  \tfrac{\mathrm{d}}{\mathrm{d}t}x-Ax \|_{L^2(0,T; \R^n)}^2 \leq  \|  \tfrac{\mathrm{d}}{\mathrm{d}t}x \|_{L^2(0,T; \R^n)}^2  +  \|  Ax \|_{L^2(0,T; \R^n)}^2 \leq \operatorname{max}(1, \|A\|^2) \| x \|_{H^1(0,T; \R^n)}^2
    \end{equation*}
    for all $x \in H^1(0,T; \R^n)$. We proceed to show bijectivity of $\calA$. To this end, let $x \in \ker \calA$. In view of formula \eqref{eq:var_of_const}, we obtain
$ 
        x(t) = e^{tA}x_0 = e^{tA}0 =0,
  $
    which proves $x=0$. Furthermore, let $(f, x_0) \in L^2(0,T; \R^n) \times \R^n$ and denote by $x \in H^1(0,T; \R^n)$ the unique mild solution of 
    \begin{equation*}
        \dot x(t) = A x(t) + f(t)  , \qquad x(0) = x_0.
    \end{equation*}
    Then, $\calA x = \begin{smallbmatrix}
        f \\ x_0
    \end{smallbmatrix}$ and hence, $\ran \calA =  L^2(0,T; \R^n) \times \R^n$. Hence, there exists $\calA^{-1}:  L^2(0,T; \R^n) \times \R^n \to  H^1(0,T; \R^n)$ given by 
    \begin{equation*}
        \calA^{-1} \begin{smallbmatrix}
        f \\ x_0
    \end{smallbmatrix} = e^{\cdot A}x_0 + \int_0^\cdot e^{(\cdot-s)A} f(s)\,\mathrm{d}s \eqqcolon  x(\cdot).
    \end{equation*}
    We remain to show boundedness of $\calA^{-1}$. First, assume that $\omega \neq 0$. Due to \eqref{eq:Gronwall} we observe
    \begin{align*}
           \int_0^{T} \left\|e^{t A}x_0 \right\|_{\R^n}^2 \, \mathrm{d}t  %
           \leq M^2 \left\|x_0 \right\|_{\R^n}^2 \int_0^{T} e^{2t\omega}  \, \mathrm{d}t   \leq  M^2\frac{e^{2T\omega}-1}{2\omega} \left\|x_0 \right\|_{\R^n}^2 .
    \end{align*}
    Moreover, tedious but straightforward computations show that 
\begin{align*}
\int_0^T \!\!\int_0^t \|e^{(t-s)A}f(s)\|^2 \,\mathrm ds\,\mathrm dt
&\le
M^2 \int_0^T \!\!\int_0^t
e^{2(t-s)\omega}\|f(s)\|^2
\,\mathrm ds\,\mathrm dt \\
&=
M^2 \int_0^T
\|f(s)\|^2 \left(\int_s^T e^{2(t-s)\omega}\,\mathrm dt\right) \,\mathrm ds \\
&=
M^2 \int_0^T
\tfrac{e^{2(T-s)\omega}-1}{2\omega}
\|f(s)\|^2 \,\mathrm ds \le
M^2 \tfrac{e^{2T\omega}-1}{2\omega}
\|f\|_{L^2(0,T;\mathbb R^n)}^2 .
\end{align*}
Combining these two estimates yields the $L^2$-estimate
    \begin{equation*}
        \| x \|_{L^2(0,T; \R^n)}^2 \leq M^2 \tfrac{e^{2T\omega} -1}{\omega} \left( \| x_0 \|_{\R^n}^2 +\| f\|_{L^2(0,T;\R^n)}^2 \right).
    \end{equation*}
 For the full $H^1$-estimate, we compute
    \begin{align*}
    \| \calA^{-1} \begin{smallbmatrix}
        f \\ x_0
    \end{smallbmatrix} \|_{H^1(0,T; \R^n)}^2 
    &= \| x \|_{L^2(0,T; \R^n)}^2 +  \| \dot x \|_{L^2(0,T; \R^n)}^2 \\
    &= \| x \|_{L^2(0,T; \R^n)}^2 +  \| A x + f\|_{L^2(0,T; \R^n)}^2 \\
    & \leq \left(1 + 2 \|A\|^2\right) \| x \|_{L^2(0,T; \R^n)}^2 + 2 \| f\|_{L^2(0,T; \R^n)}^2 \\
     &\leq \left( \tfrac{M^2}{\omega} \left(1 + 2 \|A\|^2\right)  (e^{2T\omega}-1)+2\right) \hspace{-1mm}\left(\left\|x_0 \right\|_{\R^n}^2  +  \| f \|_{L^2(0,T; \R^n)}^2\right) .
    \end{align*}
    The case $\omega = 0$ is derived analogously. 
\begin{flushright}$\square$\end{flushright}

\subsection{Proofs of Theorem \ref{thm:stab_hyperb} and Theorem \ref{thm:exp_stab_hyperb}}\label{subsec:app2}
In this proof, we abbreviate the Gelfand triple $(H_0^1(\Omega), L^2(\Omega), H^{-1}(\Omega))$ by $(H_{1/2}, H,H_{-1/2})$. The operator $\calA$ is bounded by construction of the solution space $E(0,T;A)$ and continuity of the evaluation operators $x \mapsto x(0)$ and $x \mapsto \dot{x}(0)$. Let $D \in L(H)$ fulfill $\langle Dv,v\rangle_H \geq 0$ for all $v \in H$. Let $G: X \coloneqq H_{1/2} \times H \supset \dom G \to X$ be defined by
\begin{equation*}
    \dom G = \left\lbrace (x,v) \in H_{1/2} \times H_{1/2} : Ax \in H \right\rbrace, \qquad G \begin{smallbmatrix}
        x \\ v 
    \end{smallbmatrix}  = \begin{smallbmatrix}
        v \\ -Ax - D v
    \end{smallbmatrix}
\end{equation*}
It is well-known, that is $G$ maximally dissipative and hence the generator of a contraction semigroup $(T(t))_{t \geq 0}$ on $X$. Observe that Theorem \ref{thm:stab_hyperb} corresponds to the choice $D=0$, whereas Theorem \ref{thm:exp_stab_hyperb} is obtained for $D=I$. For $(x(0),v(0))=(x_0,v_0) \in X$, the unique mild solution of the inhomogeneous Cauchy problem 
\begin{equation*}
   \begin{smallbmatrix}
        \dot x(t) \\ \dot v(t) 
    \end{smallbmatrix} = G \begin{smallbmatrix}
        x(t) \\ v(t) 
    \end{smallbmatrix} + \begin{smallbmatrix}
       f_1(t) \\ f_2(t)
    \end{smallbmatrix}, \ \text{is given by } \begin{smallbmatrix}
        x(t) \\ v(t) 
    \end{smallbmatrix} \coloneqq  T(t)\begin{smallbmatrix}
        x_0 \vphantom{(0)} \\ x_1 \vphantom{(0)}
    \end{smallbmatrix}  + \int_0^t T(t-s)\begin{smallbmatrix}
       f_1(s) \\ f_2(s)
    \end{smallbmatrix} \, \mathrm{d}s.
\end{equation*}
Since the above equation is the first order (in time) formulation of \eqref{eq:wave} (with $ f_1=0,f_2 =f$), we obtain 
\begin{equation}\label{eq:solution_hyperb}
    x(t) = \pi_1 T(t)\begin{smallbmatrix}
        x_0 \vphantom{(0)} \\ x_1 \vphantom{(0)}
    \end{smallbmatrix} + \int_0^t \pi_1 T(t-s)\begin{smallbmatrix}
       0 \\ f(s)
    \end{smallbmatrix}  \, \mathrm{d}s
\end{equation}
where $\pi_1 : X \to H_{1/2}$ denotes the projection onto the first component. Note that this map fulfills $\| \pi_1 \|_{X \to H_{1/2}}=1$. Let $(f,x_0,x_1) \in L^2(0,T;H) \times H_{1/2} \times H$ be arbitrary. A combination of the above and \cite[Prop. 9]{farkas2025dissipativity} yields that the unique solution $x \in E(0,T;A)$ of 
\begin{equation*}
    \calA x = \begin{smallbmatrix}
        f \\ x_0 \\ x_1
    \end{smallbmatrix}, \qquad \widetilde{\calA}x\coloneqq (\calA - \calB \calK)x=\begin{smallbmatrix}
        f \\ x_0 \\ x_1
    \end{smallbmatrix}
\end{equation*}
is given by \eqref{eq:solution_hyperb}, respectively. Hence, both $\calA$ and $\widetilde{\calA}=\calA - \calB \calK$ are invertible and straightforward computations show that the inverses are continuous. We now show the upper bounds on $  \| \calC \calA^{-1} \|,  \| \calC \tcalA^{-1} \|$, respectively. It is well-known, that if $D=0$, then $G$ is skew-adjoint and hence the generator of a unitary group $(T(t))_{t \geq 0}$ on $H_{1/2} \times H$. In this case,
\begin{align*}
         \left|\left|\calC \calA^{-1}\begin{smallbmatrix}
        f \\ x_0 \\ x_1
    \end{smallbmatrix}  \right|\right|_{L^2(0,T;H_{1/2} \times H)}^2 
        &= \int_0^T \|x(t)\|_{H_{1/2}}^2 +\|\dot x(t)\|_{H}^2 \, \mathrm{d}t \\
        &= \int_0^T \left\| T(t)\begin{smallbmatrix}
        x_0 \\ x_1 
    \end{smallbmatrix} + \int_0^t T(t-s)\begin{smallbmatrix}
       0 \\ f(s)
    \end{smallbmatrix}  \, \mathrm{d}s\right\|_{X}^2 \, \mathrm{d}t \\
        &\leq \int_0^T 2 \| \begin{smallbmatrix}
        x_0 \\ x_1 
    \end{smallbmatrix} \|_X^2 \, \mathrm{d}t + \int_0^T t \int_0^t 2 \| f(s) \|_H^2 \, \mathrm{d} s \, \mathrm{d}t  \\
     &\leq  \operatorname{max}(2T,T^2) \left( \| \begin{smallbmatrix}
        x_0 \\ x_1 
    \end{smallbmatrix} \|_X^2 + \| f  \|_{L^2(0,T;H)}^2 \right)
    \end{align*}
    and hence \eqref{eq:nonunif_bound_hyperb}. 
 Conversely, if $D=I$, then $G$ is the generator of an exponentially stable semigroup $(T(t))_{t \geq 0}$ on $H_{1/2} \times H$,  see e.g. \cite{batkai2004exponential,hryniv2003exponential}. Hence, there exist constants $M \geq 0$, $\omega>0$ such that $\|T(t)\|_{H_{1/2}\times H \to H_{1/2}\times H} \leq Me^{-\omega t}$. Therefore,
\begin{equation*}
    \left\| \int_0^t T(t-s) \begin{smallbmatrix}
        0 \\ f(s) \end{smallbmatrix}\,\mathrm{d}s \right\|_X \leq M \int_0^t e^{-\omega(t-s)} \|f(s)\|_H\,\mathrm{d}s.
\end{equation*}
Squaring and applying the weighted Cauchy-Schwarz inequality yields
\begin{align*}
    \left( \int_0^t e^{-\omega(t-s)} \|f(s)\|_H\,\mathrm{d}s \right)^2 
    &=
    \left(\int_0^te^{-\omega(t-s)/2}e^{-\omega(t-s)/2} \|f(s)\|_H \,\mathrm{d}s \right)^2\\
    &\leq \left( \int_0^t e^{-\omega(t-s)} \,\mathrm{d}s \right) \left(\int_0^t e^{-\omega(t-s)}\|f(s)\|_H^2 \,\mathrm{d}s\right).
\end{align*}
Since
$
    \int_0^t e^{-\omega(t-s)}\,\mathrm{d}s = \frac{1-e^{-\omega t}}{\omega}\leq \frac{1}{\omega},
$
it follows that
\begin{equation*}
    \left\| \int_0^t T(t-s) \begin{smallbmatrix}
        0 \\ f(s)
    \end{smallbmatrix} \,\mathrm{d}s \right\|_X^2 \leq \frac{M^2}{\omega} \int_0^t e^{-\omega(t-s)} \|f(s)\|_H^2 \,\mathrm{d}s.    
\end{equation*}
Consequently,
    \begin{align*}
         \left|\left|\calC \widetilde{\calA}^{-1}\begin{smallbmatrix}
        f \\ x_0 \\ x_1
    \end{smallbmatrix}  \right|\right|^2%
        &= \int_0^T \|x(t)\|_{H_{1/2}}^2 +\|\dot x(t)\|_{H}^2 \, \mathrm{d}t \\
        &= \int_0^T \left\| T(t)\begin{smallbmatrix}
        x_0 \\ x_1 
    \end{smallbmatrix} + \int_0^t T(t-s)\begin{smallbmatrix}
       0 \\ f(s)
    \end{smallbmatrix}  \, \mathrm{d}s\right\|_{X}^2 \, \mathrm{d}t \\
        &\leq \int_0^T 2M^2e^{-2\omega t} \| \begin{smallbmatrix}
        x_0 \\ x_1 
    \end{smallbmatrix} \|_X^2 \, \mathrm{d}t + \frac{2M^2}{\omega} \int_0^T 
\int_0^t
e^{-\omega(t-s)}
\|f(s)\|_H^2
\,\mathrm{d}s\, \mathrm{d}t  \\
    &\leq M^2 \tfrac{1-e^{-2\omega T}}{\omega}  \| \begin{smallbmatrix}
        x_0 \\ x_1 
    \end{smallbmatrix} \|_X^2 + 2M^2 \int_0^T \| f(s) \|_H^2 \left( \int_s^T e^{-\omega (t-s)}  \, \mathrm{d}t \right) \, \mathrm{d} s .
    \end{align*}
Since  $\int_s^T e^{-\omega (t-s)}\, \mathrm{d} t = \int_0^{T-s} e^{-\omega r}\, \mathrm{d} r 
    \leq \int_0^\infty e^{-\omega r}\, \mathrm{d} r = \frac{1}{\omega}$
and $T \mapsto \tfrac{1-e^{-2\omega T}}{\omega}$ is monotonically decreasing,\eqref{eq:unif_bound_hyperb} follows due to
\begin{equation*}
    \left|\left|\calC \widetilde{\calA}^{-1}\begin{smallbmatrix}
        f \\ x_0 \\ x_1
    \end{smallbmatrix}  \right|\right|_{L^2(0,T;H_{1/2} \times H)}^2 \leq \max \left(\tfrac{M^2}{\omega},\tfrac{2M^2}{\omega} \right) \left| \left| \begin{smallbmatrix}
        f \\ x_0 \\ x_1
    \end{smallbmatrix}\right| \right|_{L^2(0,T; H) \times H^{1/2} \times H}^2
\end{equation*}

\begin{flushright}$\square$\end{flushright}
\end{small}

\end{document}